\definecolor{verydarkblue}{rgb}{0,0,0.5}
\newcommand{\comment}[1]{}
\newcommand{\tr}{\operatorname{tr}}
\newcommand{\ol}[1]{\overline{#1}}
\theoremstyle{plain}
\crefname{introtheorem}{Theorem}{Theorems}
\newtheorem*{theorem*}{Theorem}
\newtheorem{theorem}{Theorem}[section]
\newtheorem{proposition}[theorem]{Proposition}
\newtheorem{lemma}[theorem]{Lemma}
\newtheorem{corollary}[theorem]{Corollary}
\theoremstyle{definition}
\newtheorem{definition}[theorem]{Definition}
\newtheorem{notation}[theorem]{Notation}
\theoremstyle{remark}
\newtheorem{remark}[theorem]{Remark}
\newtheorem{example}[theorem]{Example}
\numberwithin{figure}{section}
\numberwithin{equation}{section}
\def\N{{\mathbb N}}
\def\Z{{\mathbb Z}}
\newcommand{\PP}{\mathbb{P}}
\def\P{{\mathbb P}}
\def\cQ{\mathcal{Q}}
\def\cV{\mathcal{V}}
\def\.{\cdot}
\def\^{\widehat}
\def\~{\widetilde}
\def\({\left(}
\def\){\right)}
\def\*{{}^*}
\renewcommand{\and}{ \ \ \text{ and } \ \ }
\def\red{\mathrm{red}}
\renewcommand{\phi}{\varphi}
\DeclareMathOperator{\GL}  {GL}
\DeclareMathOperator{\Gr} {Gr}
\DeclareMathOperator{\SL}  {SL}
\DeclareMathOperator{\Spec} {Spec}
\DeclareMathOperator{\End} {End}
\DeclareMathOperator{\Sym} {Sym}
\DeclareMathOperator{\Cl} {Cl}
\DeclareMathOperator{\rad} {rad}
\DeclareMathOperator{\id} {id}
\DeclareMathOperator{\Id} {Id}
\DeclareMathOperator{\iso}{iso}
\def\isom{\simeq}
\newcommand{\Tim}[1]{}
\newcommand{\Chris}[1]{}
\newcommand{\Nafie}[1]{}
\newcommand{\Jan}[1]{}
\newcommand{\Rob}[1]{}
\DeclareMathOperator{\Spin}{Spin}
\DeclareMathOperator{\SO}{SO}
\renewcommand{\O}{\mathrm{O}}
\def\so{\mathfrak{so}}
\def\inv{\times} 
\def\Wedge{\bigwedge\nolimits}
\def\gl{\mathfrak{gl}}
\newcommand{\lspan}[1]{\langle #1 \rangle}
\begin{document}

\title{Topological Noetherianity of the infinite half-spin representations}

\author[Chiu]{Christopher Chiu}
\address{Department of Mathematics, KU Leuven, Celestijnenlaan 200b, box 02400, 3001 Heverlee, Belgium}
\email{christopherheng.chiu@kuleuven.be}

\author[Draisma]{Jan Draisma}
\address{Mathematical Institute, University of Bern, Sidlerstrasse 5, 3012 Bern, Switzerland}
\email{jan.draisma@unibe.ch}

\author[Eggermont]{Rob Eggermont}
\address{Department of Mathematics and Computer Science, Eindhoven University of Technology, P.O. Box 513, 5600MB, Eindhoven, the Netherlands}
\email{r.h.eggermont@tue.nl}

\author[Seynnaeve]{Tim Seynnaeve}
\address{Department of Computer Science, KU Leuven, Celestijnenlaan 200A, Room 02.183, 3001 Leuven, Belgium}
\email{tim.seynnaeve@kuleuven.be}

\author[Tairi]{Nafie Tairi}
\address{Mathematical Institute, University of Bern, Alpeneggstrasse 22, 3012 Bern, Switzerland}
\email{nafie.tairi@unibe.ch}

\maketitle

\begin{abstract}
     We prove that the infinite half-spin representations are topologically Noetherian with respect to the infinite spin group. As a consequence we obtain that half-spin varieties, which we introduce, are defined by the pullback of equations at a finite level. The main example for such varieties is the infinite isotropic Grassmannian in its spinor embedding, for which we explicitly determine its defining equations.
\end{abstract}

\section{Introduction} \label{s:intro}

\subsection{Purpose of this paper and main theorem} 

The purpose of this paper is to study certain varieties $X_n$ that live
in the half-spin representations of the even spin groups $\Spin(2n)$
with $n$ varying. In particular, we will show that these varieties are
defined, for all $n$, by pulling back the equations for a single $X_{n_0}$
along suitable contraction maps. The simplest instance of such a
variety is the Grassmannian of $n$-dimensional isotropic spaces in a
$2n$-dimensional orthogonal space. In this case, we use earlier work
\cite{ST21} by the last two authors to show that $n_0$ can be taken equal to $4$; see Theorem~\ref{cor:isotropic-cartan}.

But the {\em half-spin varieties} that we introduce go far beyond the
maximal isotropic Grassmannian. Indeed, this class of varieties is
preserved under linear operations such as joins and tangential
varieties, and under finite unions and arbitrary intersections.  Consequently, any variety obtained from several copies of the maximal isotropic Grassmannian by such operations is defined by equations of some degree
bounded independently of $n$. We stress, though, that these results
are of a purely topological/set-theoretic nature. It is not true,
for instance, that one gets the entire ideal of the maximal isotropic
Grassmannian of $n$-spaces in a $2n$-space by pulling back equations
for $X_4$ along the maps that we define.

Our main results about half-spin varieties are
Theorem~\ref{t:spin-var-top-noeth}, which establishes a descending chain
condition for these, and Corollary~\ref{cor:finite_to_finite_spinvariety},
which implies the results mentioned above. These results follow from
a companion result in infinite dimensions, which is a little easier to
state here. We will construct a direct limit $\Spin(V_\infty)$ of all
spin groups; here $V_\infty=\bigcup_n V_n$ is a countable-dimensional
vector space with basis $e_1,f_1,e_2,f_2,e_3,f_3,\ldots$ and a bilinear
form determined by $(e_i|e_j)=(f_i|f_j)=0$ and $(e_i|f_j)=\delta_{ij}$.
Furthermore, we will construct a direct limit $\Wedge_\infty^+ E_\infty$
of all even half-spin representations. This space has as basis all
formal infinite products
\[ e_{i_1} \wedge e_{i_2} \wedge e_{i_3} \wedge \cdots \]
where $\{i_1<i_2<\ldots\}$ is a cofinite subset of the positive
integers. The group $\Spin(V_\infty)$ acts naturally on this space,
and hence on its dual $(\Wedge_\infty^+ E_\infty)^*$, which we regard
as the spectrum of the symmetric algebra on $\Wedge_\infty^+ E_\infty$.
Our main theorem is as follows.

\begin{theorem} \label{thm:Main1}
The scheme $(\Wedge_\infty^+ E_\infty)^*$ is topologically
$\Spin(V_\infty)$-Noetherian. That is, every chain
\[ 
    X_1 \supseteq X_2 \supseteq X_3 \supseteq \ldots 
\]
of $\Spin(V_\infty)$-stable reduced closed subschemes stabilises.
\end{theorem}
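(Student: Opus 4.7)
The plan is to prove topological $\Spin(V_\infty)$-Noetherianity of $(\Wedge_\infty^+ E_\infty)^*$ by an inductive argument in the spirit of Draisma's proof of topological $\GL_\infty$-Noetherianity for polynomial functors. Arguing by contradiction, a non-stabilising chain provides at each step $k$ a polynomial $f_k$ in the vanishing ideal of $X_k$ but not of $X_{k-1}$. The strategy is to use the $\Spin(V_\infty)$-action to replace the $f_k$'s by polynomials lying in the coordinate ring of a single finite-dimensional half-spin space $\Wedge^+ E_n$, where classical Noetherianity delivers the contradiction.

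The first ingredient is a Levi-type decomposition. Inside $\Spin(V_\infty)$ sits a copy of $\GL(E_\infty)$, namely the subgroup preserving the Lagrangian splitting $V_\infty = E_\infty \oplus F_\infty$. Writing a basis vector of $\Wedge_\infty^+ E_\infty$ as $\omega_S := e_{i_1} \wedge e_{i_2} \wedge \cdots$, with $S$ the (finite, even-sized) complement of $\{i_j\}$ in the positive integers, the identification $\omega_S \leftrightarrow \bigwedge_{i \in S} e_i$ realises $\Wedge_\infty^+ E_\infty$, up to an appropriate central twist, as a countable direct sum $\bigoplus_{k \geq 0} \Wedge^{2k} E_\infty$ of polynomial functors in $E_\infty$ as a $\GL(E_\infty)$-module. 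Each summand has topologically $\GL_\infty$-Noetherian symmetric algebra by Draisma's theorem. The remaining directions in $\Spin(V_\infty)$ are generated by exponentials of Clifford raising operators $e_i \wedge e_j$ and lowering operators $\iota_{f_i} \iota_{f_j}$, which mix adjacent summands $\Wedge^{2k}$ and $\Wedge^{2k\pm 2}$ and furnish the symmetry beyond $\GL(E_\infty)$.

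The Noetherian induction proceeds on a complexity invariant for homogeneous polynomials, combining the polynomial degree with the largest index $k$ for which the polynomial has a nonzero projection onto $\Sym^\bullet\bigl(\bigoplus_{j \geq k} \Wedge^{2j} E_\infty\bigr)$, together with a secondary shift-depth along the endomorphism $\sigma: e_i \mapsto e_{i+1},\ f_i \mapsto f_{i+1}$. The central reduction lemma — where I expect the main technical difficulty to lie — asserts that any nonzero element $f$ of a $\Spin(V_\infty)$-stable ideal can be replaced, by a coordinated combination of $\GL(E_\infty)$-moves (available within each summand via Draisma), applications of the Clifford lowering operator (to decrement the top index $k$), and applications of the shift $\sigma$ (to reduce the support in the semi-infinite direction), by another nonzero element of the same ideal with strictly smaller complexity, unless $f$ is already supported on finitely many coordinates. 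The subtle point is to orchestrate these three kinds of moves so that the invariant strictly decreases at every step, since they interact nontrivially and the Clifford operators carry one between summands in an irregular way. Once the reduction is in place, termination produces a polynomial supported on $\Wedge^+ E_n$ for some finite $n$, whose classical Noetherianity finishes the contradiction; the explicit equations for the isotropic Grassmannian at $n_0 = 4$ then come from the input of~\cite{ST21}.
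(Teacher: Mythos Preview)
Your high-level instinct---leverage the copy of $\GL(E_\infty)$ inside $\Spin(V_\infty)$, decompose $\Wedge_\infty^+ E_\infty$ into its $\GL$-isotypic pieces $(\Wedge_\infty E_\infty)_k$, and feed in a known $\GL_\infty$-Noetherianity result---matches the paper. But the execution you sketch has two genuine gaps, and the paper's argument is organised quite differently.

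First, your endgame does not deliver a contradiction. Every element of $\Sym(\Wedge_\infty^+ E_\infty)$ is already a polynomial in finitely many basis vectors $e_I$, hence already ``supported on $\Wedge^+ E_n$ for some finite $n$''. So your reduction lemma, as stated, is vacuous; and even with a sharper statement, producing for each $k$ a polynomial $f_k$ with finite support gives no contradiction unless you can force all the $f_k$ into the \emph{same} finite piece, which you have no mechanism for. Relatedly, your complexity invariant mixes three incommensurable quantities (degree, top $k$, shift-depth) and you explicitly flag that you do not know how to make it strictly decrease under the available moves; this is not a minor technicality but the heart of the matter.

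The paper avoids this by inducting not on a complexity of individual polynomials, but on $\delta_X$, the \emph{minimal degree} of a nonzero equation for a $\Spin(V_\infty)$-stable closed $X$. Given such a minimal-degree equation $p$, one takes a partial derivative $q=\partial p/\partial e_I$ (with $|I^c|$ maximal) and acts with successive lowering operators $f_i\wedge f_j$ to isolate, in the ideal of $X$, an element of the form $\pm e_{\{n+1,n+2,\ldots\}}\cdot q + r$ where $r$ involves only variables $e_J$ with $|J^c|<n$. The key structural lemma is then: on the principal open $Z=X[1/q]$, every $e_J$ with $|J^c|\geq n$ is expressible as a polynomial in variables with $|J^c|\leq n-2$ divided by a power of $q$, so $Z$ embeds closedly into $\Spec(R^+_{\leq n-2}[1/q])$. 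That target is $H_n$-Noetherian by the result of Eggermont--Snowden on algebraic representations (not just Draisma's polynomial-functor theorem; one needs both $E_\infty$- and $(E_\infty)_*$-directions). The closed complement $Y$, cut out by the $\Spin(V_\infty)$-orbit of $q$, has $\delta_Y<\delta_X$ since $\deg q=\delta_X-1$, so is Noetherian by induction. The open/closed decomposition finishes the proof. Your proposal has the right ingredients on the table but is missing this $\delta_X$-induction and the open--closed splitting that makes it work.
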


\subsection{Relations to the literature}

Our work is primarily motivated by earlier work by the second and
third author on {\em Pl\"ucker varieties}, which live in exterior
powers $\Wedge^n K^{p+n}$ with both $p$ and $n$ varying. The results
in \cite{DE18} on Pl\"ucker varieties are analoguous to the results
we establish here for half-spin varieties, and the main result in
\cite{Nekrasov20} 
is an exact analogue of Theorem~\ref{thm:Main1} for
the dual infinite wedge, acted upon by the infinite general linear group.

On the one hand, we now have much better tools available to study these
kind of questions than we had at the time of \cite{DE18}---notably the
topological Noetherianity of polynomial functors \cite{Draisma19} and
their generalisation to algebraic representations \cite{Eggermont22}.
But on the other hand, spin representations are much more intricate
than polynomial functors, and a part of the current paper
will be devoted to establishing the precise relationship between the infinite half-spin representation and algebraic representations of the infinite general linear group, so as to use
those tools.

This paper fits in a general programme that asks for which sequences of
representations of increasing groups one can expect Noetherianity
results. This seems to be an extremely delicate question. Indeed,
while Theorem~\ref{thm:Main1} establishes Noetherianity of the dual
infinite {\em half-}spin representation, we do not know whether the
dual infinite spin
representation is $\Spin(V_\infty)$-Noetherian; see
Remark~\ref{rem:so-noetherianity}. Similarly, we do not
know whether a suitable inverse limit of exterior powers $\Wedge^n V_n$
is $\SO(V_\infty)$-Noetherian---and there are many more natural sequences
of representations for which we do not yet have satisfactory results.

In the context of secant varieties, we point out the work by Sam on
Veronese varieties: the $k$-th secant variety of the $d$-th Veronese
embedding of $\PP(K^n)$ is defined {\em ideal-theoretically} by finitely
many types of equations, independently of $n$---and in particular in
bounded degree \cite{Sam15b}. Furthermore, a similar statement holds for
the $p$-th syzygies for any fixed $p$ \cite{Sam16}. Similar results for ordinary
Grassmannians were established by Laudone in \cite{Laudone18}. It would be very
interesting to know whether their techniques apply to secant varieties
of the maximal isotropic Grassmannian in its spinor embedding. 
Our results here give a weaker set-theoretic statement, but for a more
general class of varieties.

After establishing Noetherianity, it would be natural to try and study
additional geometric properties of $\Spin(V_\infty)$-stable subvarieties of the
dual infinite half-spin representation. Perhaps there is a theory there
analogous to the theory of $\GL$-varietes \cite{Bik21,Bik22}. However,
we are currently quite far from any such deeper understanding!

\subsection{Organisation of this paper}

In \S\ref{s:spin-reps}, we recall the construction of the
(finite-dimensional) half-spin representations. We mostly do this in
a coordinate-free manner, only choosing---as one must---a maximal
isotropic subspace of an orthogonal space for the construction. But for
the construction of the infinite half-spin representation, we will need
explicit formulas, and these are derived in \S\ref{s:spin-reps}, as well.

In \S\ref{sec:IsoGras/InfiniteSpin}, we first describe the
embedding of the maximal isotropic Grassmannian in the projectivised
half-spin representation. Then, we define suitable contraction and
multiplication maps, which we show preserve the cones over these
isotropic Grassmannians. Finally, we use these maps to construct the
infinite-dimensional half-spin representations.

In \S\ref{s:noetherian}, we prove Theorem~\ref{thm:Main1} (see
Theorem~\ref{thm:Main}); and in \S\ref{s:spin-vars}, we state and prove
the main results about half-spin varieties discussed above. Finally, in \S\ref{sec:Cartan}
we prove the universality of the isotropic Grassmannian of $4$-spaces in an
$8$-dimensional space. We do so by relating the half-spin representations
via the Cartan map to the exterior power representations and using
results from \cite{ST21}.

\section{Finite spin representations and the spin group} 
\label{s:spin-reps}

In this section we collect some preliminaries on spin groups and their defining representations. Throughout we will assume that $K$ is an algebraically closed field of characteristic $0$. We follow \cite{manivel2009spinor} in our set-up; for more general references on spin groups and their representations see \cite{LM89,Pro07}. 

\subsection{The Clifford algebra}
\label{ssec:Clifford}

Let $V$ be a finite-dimensional vector space over $K$ endowed with a quadratic form $q$. The \textit{Clifford algebra} $\Cl(V,q)$ of $V$ is the quotient of the tensor algebra $T(V) = \bigoplus_{d \geq 0 } V^{\otimes d}$ by the two-sided ideal generated by all elements
\begin{equation} \label{eq:cliff-rel}
    v \otimes v - q(v) \cdot 1, \: v \in V.
\end{equation}
This is also the two-sided ideal generated by
\begin{equation} \label{eq:cliff-rel-2}
    v \otimes w + w \otimes v - 2 (v|w) \cdot 1, \: v,w \in V,
\end{equation}
where $(\cdot|\cdot)$ denotes the bilinear form associated to $q$ defined by $(v|w):=\frac{1}{2}(q(v+w)-q(v)-q(w))$.

The Clifford algebra is a functor from the category of vector spaces equipped with a quadratic form to the category of (unital) associative algebras. That is, any linear map $\varphi \colon (V,q) \to (V',q')$ with $q'(\varphi(v))=q(v)$ for all $v \in V$ induces a homomorphism of associative algebras $\Cl(\varphi) \colon \Cl(V,q) \to \Cl(V',q')$. If $\phi$ is an inclusion $V \subseteq V'$, then $\Cl(\varphi)$ is injective, and hence $\Cl(V,q)$ is a subalgebra of  $\Cl(V',q')$. 

The decomposition of $T(V)$ into the even part $T^+(V):=\bigoplus_{d \text{ even}} V^{\otimes d}$ and the odd part $T^-(V):=\bigoplus_{d \text{ odd}} V^{\otimes d}$ induces a decomposition $\Cl(V,q) = \Cl^+(V,q) \oplus \Cl^-(V,q)$, turning $\Cl(V,q)$ into a $\Z/2\Z$-graded associative algebra. Note that, via the commutator on $\Cl(V,q)$, the even Clifford algebra $\Cl^+(V,q)$ is a Lie subalgebra of $\Cl(V,q)$.

The anti-automorphism of $T(V)$ determined by $v_1 \otimes \cdots \otimes v_d \mapsto v_d \otimes \cdots \otimes v_1$ preserves the ideal in the definition of $\Cl(V,q)$ and therefore induces an anti-automorphism $x \mapsto x^*$ of $\Cl(V,q)$.

\subsection{The Grassmann algebra as a \texorpdfstring{$\Cl(V)$-module}{Cl(V)-module}} \label{ssec:Grassmann}

From now on, we will write $\Cl(V)$ for $\Cl(V,q)$ when $q$ is clear from the context. If $q=0$, then $\Cl(V) =\Wedge V$, the Grassmann algebra of $V$. If $E \subseteq V$ is an isotropic subspace, that is, a subspace for which $q|_E=0$, then this fact allows us to identify $\Wedge E$ with the subalgebra $\Cl(E)$ of $\Cl(V)$.

For general $q$, $\Cl(V)$ is not isomorphic as an algebra to $\Wedge V$, but $\Wedge V$ is naturally a $\Cl(V)$-module as follows. For $v \in V$ define $o(v):\Wedge V \to \Wedge V$ (the ``outer product'') as the linear map 
\[ 
    o(v)\omega:=v \wedge \omega 
\]
and $\iota(v):\Wedge V \to \Wedge V$ (the ``inner product'') as the linear map determined by 
\[ 
    \iota(v)w_1 \wedge \cdots \wedge w_k:=\sum_{i=1}^k (-1)^{i-1} (w_i\mid v) w_1 \wedge \cdots \wedge \widehat{w}_i \wedge \cdots \wedge w_k. 
\]
Here, and elsewhere in the paper, \; $\widehat{\cdot}$\; indicates a factor that is left out. Now $v \mapsto \iota(v) + o(v)$ extends to an algebra homomorphism $\Cl(V) \to \End(\Wedge V)$. To see this, it suffices to consider $v,w_1,\ldots,w_k \in V$ and verify 
\[ 
    (\iota(v)+o(v))^2 w_1 \wedge \cdots \wedge w_k = (v|v) w_1 \wedge \cdots \wedge w_k. 
\]
We write $a \bullet \omega$ for the outcome of $a \in \Cl(V)$ acting on $\omega \in \Wedge V$. Using induction on the degree of a product, the linear map $\Cl(V) \to \Wedge V, a \mapsto a \bullet 1$ is easily seen to be an isomorphism of vector spaces. In particular, $\Cl(V)$ has dimension $2^{\dim V}$. 

\subsection{Embedding \texorpdfstring{$\so(V)$}{so(V)} into the Clifford algebra} \label{ssec:embedding}

From now on, we assume that $q$ is nondegenerate and write
$\SO(V)=\SO(V,q)$ for the special orthogonal group of $q$. Its Lie
algebra $\so(V)$ consists of linear maps $V \to V$ that are
skew-symmetric with respect to $(\cdot|\cdot)$, that is, those $A \in
\End(V)$ such that  $(Av|w) = -(v|Aw)$ for all  $v, w \in V$. We have
a unique linear map $\psi: \Wedge^2 V \to \Cl^+(V)$ with $\psi(u
\wedge v)=uv-vu$, and $\psi$ is injective. A straightforward
computation shows that the image $L$ of $\psi$ is closed under the
commutator in $\Cl(V)$, hence a Lie subalgebra. We claim that $L$ is isomorphic to $\so(V)$. Indeed, for $u,v,w \in V$ we have  
\[
    [\psi(u \wedge v),w] = [[u,v],w] = 4(v|w) u - 4(u|w)v.
\]
We see, first, that $V \subseteq \Cl(V)$ is preserved under the adjoint
action of $L$; and second, that $L$ acts on $V$ via skew-symmetric
linear maps, so that $L$ maps into $\so(V)$. Since every map in
$\so(V)$ is a linear combination of the linear maps above, and since
$\dim(L)=\dim(\so(V))$, the map $L \to \so(V)$ is an isomorphism. We will
identify $\so(V)$ with the Lie subalgebra $L \subseteq \Cl(V)$ via the
inverse of this isomorphism, and we will identify
$\Wedge^2 V$ with $\so(V)$ via the map $u\wedge v \mapsto ( w \mapsto
(v|w) u - (u|w)v)$. The concatenation of these identifications is the
linear map $\frac{1}{4} \psi$. 

\subsection{The half-spin representations} 
\label{ss:spin-rep}

From now on, we assume that $\dim(V)=2n$. We believe that 
all our results hold \textit{mutatis mutandis} also in the
odd-dimensional case, but we have not checked the details. 
A \textit{maximal} isotropic subspace $U$ of $V$ is an isotropic subspace which is maximal with respect to inclusion. Since $K$ is algebraically closed, $q$ has maximal Witt index, so that every maximal isotropic subspace of $V$ has dimension $n$.

The spin representation of $\so(V)$ is constructed as follows. Let $F$ be a maximal isotropic subspace of $V$ and let $f_1, \ldots, f_n$ be a basis of $F$. Define $f:=f_1 \cdots f_n \in \Cl(F)$; this element in $\Cl(F)=\Wedge F$ is well-defined up to a scalar. Then the left ideal $\Cl(V) \cdot f$ is a left module for the associative algebra $\Cl(V)$, and hence for its Lie subalgebra $\so(V)$. This ideal is called the \textit{spin representation} of $\so(V)$. As $\Cl(V)$ is $\Z/2\Z$-graded, the spin representation splits into a direct sum of two subrepresentations for $\Cl^+(V)$, and hence for $\so(V) \subseteq \Cl^+(V)$, namely, $\Cl^+(V) \cdot f$ and $\Cl^-(V) \cdot f$. These representations are called the \textit{half-spin representations} of $\so(V)$.

\subsection{Explicit formulas}
\label{ssec:ExplicitFormulas}

We will need more explicit formulas for the action of $\so(V)$ on the half-spin representations. To this end, let $E$ be another isotropic $n$-dimensional subspace of $V$ such that $V=E \oplus F$. Then the map 
\[ 
    \Wedge E=\Cl(E) \to \Cl(V)f,\quad \omega \mapsto \omega f 
\]
is a linear isomorphism, and we use it to identify $\Wedge E$ with the spin representation. We write $\rho:\so(V) \to \End(\Wedge E)$ for the corresponding representation. It splits as a direct sum of the half-spin representations $\rho_+:\so(V) \to \End(\Wedge^+ E)$ and $\rho_-:\so(V) \to \End(\Wedge^- E)$ where $\Wedge^+E=\bigoplus_{d \text{ even}} \Wedge^d E$ and $\Wedge^-E=\bigoplus_{d \text{ odd}} \Wedge^d E$.

In this model of the spin representation, the action of $v \in E \subseteq \Cl(V)$ on the spin representation $\Wedge E$ is just the outer product on $\bigwedge E: o(v):\Wedge E \to \Wedge E,\ \omega \mapsto v \wedge \omega$, while the action of $v \in F \subseteq \Cl(V)$ is twice the inner product on $\bigwedge E$:
\[ 
    2 \iota(v) w_1 \wedge \cdots \wedge w_k = 2 \sum_{i=1}^k (-1)^{i-1}(v|w_i) w_1 \wedge \cdots \wedge \^{w_i} \wedge \cdots \wedge w_k. 
\]
The factor $2$ and the alternating signs come from the following identity in $\Cl(V)$: 
\[ 
    v v_i=2 (v|v_i) - v_i v \text{ for } v \in F \text{ and } v_i \in E.
\]
For a general $v \in V$ we write $v = v' + v''$ with $v' \in E$, $v'' \in F$. Then the action of $V$ on $\Wedge E$ is given by
\[
    v \mapsto o(v') + 2 \iota(v'').
\]
We now compute the linear maps by means of which $\so(V)$ acts on $\Wedge E$. To this end, recall that a pair $e,f \in V$ is called \textit{hyperbolic} if $e$, $f$ are isotropic and $(e|f) = 1$. Given the basis $f_1,\ldots,f_n$ of $F$, there is a unique basis $e_1,\ldots,e_n$ of $E$ so that $(e_i|f_j)=\delta_{ij}$; then $e_1,\ldots,e_n,f_1,\ldots,f_n$ is called a \textit{hyperbolic basis} of $V$. Now the element $e_i \wedge e_j \in \so(V)$ acts on $\Wedge E \isom \Cl(V)f$ via the linear map
\[
    \frac{1}{4}(o(e_i)o(e_j)-o(e_j)o(e_i))=\frac{1}{2} o(e_i)o(e_j);
\] 
the element $f_i \wedge f_j$ acts via the linear map
\[ 
    \frac{1}{4}(4\iota(f_i) \iota(f_j)-4\iota(f_j)\iota(f_i))=2 \iota(f_i) \iota(f_j); 
\]
and the element $e_i \wedge f_j$ acts via the linear map
\[ 
    \frac{1}{4}(o(e_i) 2 \iota(f_j)- 2 \iota(f_j) o(e_i))=\frac{1}{2} (o(e_i) \iota(f_j)- \iota(f_j) o(e_i)).
\]
In particular, $\omega_0:=e_1 \wedge \cdots \wedge e_n \in \Wedge E$ is mapped to $0$ by all elements $e_i \wedge e_j$ and all elements $e_i \wedge f_j$ with $i \neq j$, and it is mapped to $\frac{1}{2} \omega_0$ by all $e_i \wedge f_i$. 

\subsection{Highest weights of the half-spin representations}
\label{ssec:highest weight}

Recall, e.g. from \cite[Chapter IV, pages 140--141]{Jacobson62}, that in the basis $e_1,\ldots,e_n,f_1,\ldots,f_n$, matrices in $\so(V)$ have the form 
\[ 
    \begin{bmatrix} A&B \\ C&-A^T \end{bmatrix} \text{ with } B^T=-B, \text{ and } C^T=-C. 
\]
Here the $(e_i,e_j)$-entry of $A$ is the coefficient of $e_i \wedge f_j$, the $(e_i,f_j)$-entry of $B$ is the coefficient of $e_i \wedge e_j$, and the $(f_i,e_j)$-entry of $C$ is the coefficient of $f_i \wedge f_j$. 

The diagonal matrices $e_i \wedge f_i$ span a Cartan subalgebra of $\so(V)$ with standard (Chevalley) basis consisting of $h_i:=e_i \wedge f_i - e_{i+1}
\wedge f_{i+1}$ for $i=1,\ldots,n-1$ and $h_n:=e_{n-1} \wedge f_{n-1}
+ e_n \wedge f_n$ (this last element is forgotten in the basis of the
Cartan algebra on \cite[page 140]{Jacobson62}).

Now $(e_i \wedge e_j) \omega_0=(e_i \wedge f_j) \omega_0=0$ for all $i \neq j$. Furthermore, the elements $h_1,\ldots,h_{n-1}$ map $\omega_0$ to $0$, while $h_n$ maps $\omega_0$ to $\omega_0$. Thus the Borel subalgebra maps the line $K \omega_0$ into itself and $\omega_0$ is a highest weight vector of the \textit{fundamental weight} $\lambda_0:=(0,\ldots,0,1)$ in the standard basis. Summarising, $\omega_0 \in \Wedge E$ generates a copy of the irreducible $\so(V)$-module $V_{\lambda_0}$ with highest weight $\lambda_0$. Clearly, the $\so(V)$-module generated by $\omega_0$ is contained in $\Wedge^+ E$ if $n$ is even, and contained in $\Wedge^- E$ when $n$ is odd. One can also show that both half-spin representations are irreducible, hence one of them is a copy of $V_{\lambda_0}$. For the other half-spin representation, consider the element 
\[
    \omega_1:=e_1 \wedge \cdots \wedge e_{n-1} \in \Wedge E. 
\] 
This element is mapped to zero by $e_i \wedge e_j$ for all $i \neq j$ and by $e_i \wedge f_j$ for all $i < j$. It is further mapped to $0$ by $h_1,\ldots,h_{n-2},h_n$, and to $\omega_1$ by $h_{n-1}$. For example, we have
\begin{align*} 
    h_n \omega_1 &= \frac{1}{2}\left(o(e_{n-1})\iota(f_{n-1})-\iota(f_{n-1})o(e_{n-1})+o(e_n) \iota(f_n)-\iota(f_n) o(e_n)\right) e_1 \wedge \cdots \wedge e_{n-1} \\
    &=\frac{1}{2}(1 - 0 + 0 - 1) \omega_1 = 0, \text{ and similarly}\\ h_{n-1} \omega_1&=\frac{1}{2} (1-0-0+1) \omega_1=\omega_1.
    \end{align*}
Hence $\omega_1$ generates a copy of $V_{\lambda_1}$, the irreducible $\so(V)$-module of highest weight $\lambda_1 := (0,\ldots,0,1,0)$; this is the other half-spin representation.

\subsection{The spin group}

Let $\rho: \so(V) \to \End(\Wedge E)$ be the spin representation. The
spin group $\Spin(V)$ can be defined as the subgroup of $\GL(\Wedge E)$
generated by the one-parameter subgroups $t \mapsto \exp(t \rho(X))$
where $X$ runs over the root vectors $e_i \wedge e_j, f_i \wedge f_j$
and $e_i \wedge f_j$ with $i \neq j$. Note that $\rho(X)$ is nilpotent
for each of these root vectors, so that $t \mapsto \exp(t \rho(X))$
is an {\em algebraic} group homomorphism $K \to \GL(\Wedge E)$. It is a
standard fact that the subgroup generated by irreducible curves through
the identity in an algebraic group is itself a connected algebraic
group; see \cite[Proposition 2.2]{Borel91}. So $\Spin(V)$ is a connected
algebraic group, and one verifies that its Lie algebra is isomorphic
to the Lie algebra generated by the root vectors $X$, i.e., to $\so(V)$.

By construction, the (half-)spin representations $\Wedge
E$, $\Wedge^+ E$ and $\Wedge^- E$ are representations of $\Spin(V)$.
We use the same notation
$
     \rho: \Spin(V) \to \GL(\Wedge E), \ \rho_+ \colon \Spin(V) \to
     \GL(\bigwedge\nolimits^+ E), \text{ and } \rho_- \colon
     \Spin(V) \to \GL(\bigwedge\nolimits^- E)
$
for these as for the corresponding Lie algebra representations.

\begin{remark}
The algebraic group $\Spin(V)$ is usually constructed as a subgroup
of the unit group $\Cl^\inv(V)$ as follows: consider first
\[
    \Gamma(V) = \{x\in \Cl^\inv(V) \mid xVx^{-1} = V\},
\]
sometimes called the Clifford group. Then $\Spin(V)$ is the subgroup
of $\Gamma(V)$ of elements of \emph{spinor norm} $1$; that is, $x x^* =
1$, where $x^*$ denotes the involution defined in \cref{ssec:Clifford}.
In this model of the spin group it is easy to see that it admits a
$2:1$ covering $\Spin(V) \to \SO(V)$, namely, the restriction of the
homomorphism $\Gamma(V)
\to \O(V)$ given by associating to $x \in \Gamma(V)$ the orthogonal
transformation $w \mapsto x w x^{-1}$. For more details see \cite{Pro07}.
Since our later computations involve the Lie algebra
$\so(V)$ only, the definition of $\Spin(V)$ above suffices for our
purposes.
\end{remark}

The half-spin representations are \textit{not} representations of the group $\SO(V)$; this can be checked, e.g., by showing that the highest weights $\lambda_0$ and $\lambda_1$ are not in the weight lattice of $\SO(V)$. 

\subsection{Two actions of \texorpdfstring{$\gl(E)$}{} on \texorpdfstring{$\Wedge E$}{}}
\label{ssec:two-actions-gl}

The definition of the (half-)spin representation(s) of $\so(V)$ and $\Spin(V)$ as $\Cl^{(\pm)}(V) f$ involves only the quadratic form $q$ and the choice of a maximal isotropic space $F \subseteq V$. Consequently, any linear automorphism of $V$ that preserves $q$ and maps $F$ into itself also acts on $\Cl^{(\pm)}(V) f$. These linear automorphisms form the stabiliser of $F$ in $\SO(V)$, which is the parabolic subgroup whose Lie algebra consists of the matrices in $\SO(V)$ that are block lower triangular in the basis $e_1,\ldots,e_n,f_1,\ldots,f_n$. So, while $\SO(V)$ does not act naturally on the (half-)spin representation(s), this stabiliser does.

In particular, in our model $\Wedge^{(\pm)} E$ of the (half-)spin representation(s), the group $\GL(E)$, embedded into $\SO(V)$ as the subgroup of block diagonal matrices
\[ 
    \begin{bmatrix} a & 0 \\ 0 & -a^{T} \end{bmatrix} 
\]
acts on $\Wedge E$ in the natural manner. We stress that this is \textit{not} the action obtained by integrating the action of $\gl(E) \subseteq \so(V)$ on $\Wedge E$ \textit{regarded as the spin representation}. Indeed, the standard action of $e_i \wedge f_j \in \gl(E)$ on $\omega:=e_{i_1} \wedge \cdots \wedge e_{i_k} \in \Wedge^k E$ yields
\[ 
    \sum_{l=1}^k e_{i_1} \wedge \cdots \wedge e_i (f_j|e_{i_l}) \wedge \cdots \wedge e_{i_k} =
    \begin{cases}
        0 \text{ if $j \not \in \{i_1,\ldots,i_k\}$}\\(-1)^{l-1} e_i \wedge e_{i_1} \wedge \cdots \wedge \widehat{e_{i_l}} \wedge \cdots \wedge e_{i_k} \text{ if $j=i_l$.} 
    \end{cases}
\]
On the other hand, in the spin representation the action is given by the linear map $\frac{1}{2}(o(e_i) \iota(f_j) - \iota(f_j) o(e_i))$. If $j \neq i$ and $j \not \in \{i_1,\ldots,i_k\}$, then 
\[
    o(e_i) \iota(f_j) \omega = \iota(f_j) o(e_i) \omega = 0.
\]
If $j \neq i$ and $j=i_l$, then 
\[
    o(e_i) \iota(f_j) \omega = (-1)^{l-1} e_i \wedge e_{i_1} \wedge \cdots \wedge \^{e_{i_l}} \wedge \cdots \wedge e_{i_k}  = - \iota(f_j) o(e_i) \omega. 
\]
We conclude that for $i \neq j$, the action of $e_i \wedge f_j$ is the same in both representations. However, if $i=j$, then 
\[
    \frac{1}{2}(o(e_i) \iota(f_i) - \iota(f_i)o(e_i)) \omega = \begin{cases}
        -\frac{1}{2} \omega \text{ if $i \not \in \{i_1,\ldots,i_k\}$, and}\\ \frac{1}{2} (-1)^{l-1} e_i \wedge e_{i_1} \wedge \cdots \wedge \widehat{e_{i_l}} \wedge \cdots \wedge e_{i_k} = \frac{1}{2} \omega \text{ if $i=i_l$}.
    \end{cases}
\]
We conclude that if $\tilde{\rho}:\gl(E) \to \End(\Wedge E)$ is the standard representation of $\gl(E)$, then the restriction of the spin representation $\rho:\so(V) \to \End(\Wedge E)$ to $\gl(E)$ \textit{as a subalgebra of $\so(V)$} satisfies
\begin{equation}\label{re:Twist} 
    \rho(A)=\tilde{\rho}(A) - \frac{1}{2} \tr(A) \Id_{\Wedge E}.
\end{equation}

At the group level, this is to be understood as follows. The pre-image of $\GL(E) \subseteq \SO(V)$ in $\Spin(V)$ is isomorphic to the connected algebraic group 
\[ 
    H:=\big\{(g,t) \in \GL(E) \times K^* \mid \det(g)=t^2\big\} 
\]
for which $(g,t) \mapsto g$ is a $2:1$ cover of $\GL(E)$, and the restriction of $\rho$ to $H$ satisfies $\rho(g,t)=\tilde{\rho}(g) \cdot t^{-1}$---a ``twist of the standard representation by the inverse square root of the determinant''. 

\section{The isotropic Grassmannian and infinite spin representations}
\label{sec:IsoGras/InfiniteSpin}

\subsection{The isotropic Grassmannian in its spinor embedding} \label{ssec:igc}

As before, let $V$ be a $2n$-dimensional vector space over
$K$ endowed with a nondegenerate quadratic form. The (maximal)
\emph{isotropic Grassmannian} $\Gr_{\iso}(V,q)$ parametrizes all maximal
isotropic subspaces of $V$. It has two connected components, denoted
$\Gr^+_{\iso}(V)$ and $\Gr^-_{\iso}(V)$. The goal of this subsection is
to introduce the isotropic Grassmann cone, which is an affine cone
over $\Gr_{\iso}(V,q)$ in the spin representation. 

Fix a maximal isotropic subspace $F \subseteq V$ and as before set
$f\coloneqq f_1 \cdots f_n \in \Cl(V)$, where $f_1, \dots, f_n$ is
any basis of $F$. Now let $H \subseteq V$ be another maximal isotropic
space. Then we claim that the space 
\begin{equation}\label{eq:S_H}
    S_H := \{\omega \in \Cl(V)f \mid v \cdot \omega = 0 \text{ for
    all } 
    v \in H\} \subseteq \Cl(V)f
\end{equation}
is $1$-dimensional. Indeed, we may find a hyperbolic
basis $e_1,\ldots,e_n,f_1,\ldots,f_n$ of $V$ such that
$f_1,\ldots,f_k$ span $H \cap F$, $f_1,\ldots,f_n$ span $F$, and
$e_{k+1},\ldots,e_n,f_{1},\ldots,f_k$ span $H$. We call this hyperbolic
basis {\em adapted to $H$ and $F$}. Then the element 
\[ \omega_H:=e_{k+1} \cdots e_n f_1 \cdots f_k f_{k+1} \cdots f_n \in
\Cl(V)f \]
lies in $S_H$ since $e_i \omega_H=f_j \omega_H = 0$ for all $i > k$ and
$j  \leq k$. Conversely, if $\mu \in S_H$, then write 
\[ \mu=\sum_{l=0}^n \sum_{i_1<\ldots<i_l} c_{\{i_1,\ldots,i_l\}} e_{i_1} \cdots e_{i_l}
f. \]
If $c_I \neq 0$ for some $I$ with $I \not \supseteq \{k+1,\ldots,n\}$,
then for any $j \in \{k+1,\ldots,n\} \setminus I$ we find that $e_j \mu
\neq 0$. So all $I$ with $c_I \neq 0$ contain $\{k+1,\ldots,n\}$. If some
$I$ with $c_I \neq 0$ further contains an $i \leq k$, then $f_i \mu$
is nonzero. Hence $S_H$ is spanned by $\omega_H$, as claimed. In what
follows, by slight abuse of notation, we will write $\omega_H$ for any
nonzero vector in $S_H$. 

The space $H$ can be uniquely recovered from $\omega_H$ via 
\[ H=\{v \in V \mid v \cdot \omega_H = 0 \}. \]
Indeed, we have already seen $\subseteq$. For the converse, observe
that the vectors $e_i \omega_H, f_j \omega_H$ with $i \leq k$ and $j > k$ are
linearly independent. 

The map that sends $H \in \Gr_{\iso}(V,q)$ to the projective point representing it, i.e.,
\[
    H \mapsto [\omega_H] \in \P(\Cl(V)f),
\]
is therefore injective, and it is called the \textit{spinor embedding}
of the isotropic Grassmannian (see \cite{manivel2009spinor}). The
\textit{isotropic Grassmann cone} is defined as
\[
    \widehat{\Gr}_{\iso}(V,q) \coloneqq \bigcup_{H} \lspan{\omega_H} \subseteq \Cl(V)f,
\]
where the union is the taken over all maximal isotropic subspaces
$H\subseteq V$. We denote by $\widehat{\Gr}^\pm_{\iso}(V,q) \coloneqq
\widehat{\Gr}_{\iso}(V,q) \cap \Cl^\pm(V)f$ the cones over the connected components of the isotropic Grassmannian in its spinor embedding. 

\subsection{Contraction with an isotropic vector} 
\label{ssec:Contraction}

Let $e \in V$ be a nonzero isotropic vector. Then $V_e:=e^\perp / \lspan{e}$ is equipped with a natural nondegenerate quadratic form, and there is a rational map $\Gr_{\iso}(V) \to \Gr_{\iso}(V_e)$ that maps an $n$-dimensional isotropic space $H$ to the image in $V_e$ of the $(n-1)$-dimensional isotropic space $H \cap e^\perp$ (this is defined if $e \not \in H$, which by maximality of $H$ is equivalent to $H \not \subseteq e^\perp$). This map is the restriction to $\Gr_{\iso}(V)$ of the rational map $\PP(\Wedge^n V) \to \PP(\Wedge^{n-1} V_e)$ induced by the linear map (``contraction with $e$''):
\[
    c_e: \Wedge^n V \to \Wedge^{n-1} V_e, \quad v_1 \wedge \cdots \wedge v_n \mapsto \sum_{i=1}^n (-1)^{i-1} (e|v_i) \ol{v_1} \wedge \cdots \wedge \^{v_i} \wedge \cdots \wedge \ol{v_n} 
\] 
where $\ol{v_i}$ is the image of $v_i$ in $V/\lspan{e}$. Note first that this map is the inner product $\iota(e)$ followed by a projection. Furthermore, \textit{a priori}, the codomain of this map is the larger space $\Wedge^{n-1} (V/\lspan{e})$, but one may choose $v_1,\ldots,v_n$ such that $(e|v_i)=0$ for $i>1$, and then it is evident that the image is indeed in $\Wedge^{n-1} V_e$.

We want to construct a similar contraction map at the level of the
spin representation. For reasons that will become clear in a moment,
we restrict our attention first to a map between two half-spin
representations, as follows. Assume that $e \notin F$, and choose a basis $f_1,\ldots,f_n$ of $F$ such that $(e|f_i)=\delta_{in}$. As usual, write $f:=f_1 \cdots f_n$, and write $\ol{f}:=\ol{f}_1 \cdots \ol{f}_{n-1}$, so that $\Cl^+(V_e)\ol{f}$ is a half-spin representation of $\so(V_e)$. 

Then we define the map
\[
    \pi_e:\Cl^+(V)f \to \Cl^+(V_e)\ol{f}, \quad 
    \pi_e(a f):=\text{ the image of } \frac{1}{2}((-1)^{n-1} eaf+afe) \text{ in } \Cl(V_e)\ol{f},
\]
where the implicit claim is that the expression on the right lies in $\Cl(e^\perp)f_1\cdots f_{n-1}$, so that its image in $\Cl(V_e) \ol{f}$ is well-defined (note that the projection $e^\perp \to V_e$ induces a homomorphism of Clifford algebras), and that this image lies in the left ideal generated by $\ol{f}$. To verify this claim, and to derive a more explicit formula for the map above, let $e_1,\ldots,e_{n}=e$ be a basis of an isotropic space $E$ complementary to $F$. Then it suffices to consider the case where $a=e_{i_1} \cdots e_{i_k}$ for some $i_1<\ldots<i_k$. We then have 
\begin{align*} 
    eaf&=ee_{i_1} \cdots e_{i_k} f_1\cdots f_n\\
    &=\begin{cases} 0 \text{ if $i_k=n$, and}\\ 2 (-1)^{k+n-1}e_{i_1} \cdots e_{i_k} f_1 \cdots f_{n-1} +(-1)^{k+n}e_{i_1} \cdots e_{i_k} f_1 \cdots f_n e \text{ otherwise.} \end{cases}
\end{align*}
Multiplying by $(-1)^{n-1}$ and using that $k$ is even, the latter expression becomes
\[ 
    2 e_{i_1} \cdots e_{i_k} f_1 \cdots f_{n-1} - a f e. 
\]
Hence we conclude that
\[
    \pi_e(e_{i_1} \cdots e_{i_k} f)=
    \begin{cases}
        0 & \text{if $i_k=n$, and}\\
        \ol{e}_{i_1} \cdots \ol{e}_{i_k} \ol{f} & \text{otherwise.}
    \end{cases}
\]
In short, in our models $\Wedge^+ E$ and $\Wedge^+(E/\lspan{e})$ for the half-spin representations of $\so(V)$ and $\so(V_e)$, $\pi_e$ is just the reduction-mod-$e$ map. We leave it to the reader to check that the reduction-mod-$e$ map $\Wedge^- E \to \Wedge^-(E/\lspan{e})$ arises in a similar fashion from the map 
\[
    \pi_e:\Cl(V)^-f \to \Cl(V_e)^-\ol{f}, \quad \pi_e(a f):=\text{ the image of } \frac{1}{2}((-1)^{n} eaf+afe) \text{ in } \Cl(V_e)\ol{f}.
\]
We will informally call the maps $\pi_e$ ``contraction with $e$''. Together they define a map on $\Cl(V)f$ which we also denote by $\pi_e$.

\begin{proposition} \label{prop:pihom}
The contraction map $\pi_e:\Cl(V)f \to \Cl(V)\overline{f}$ is a 
homomorphism of $\Cl(e^\perp)$-representations.
\end{proposition}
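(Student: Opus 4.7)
The plan is to reduce $\Cl(e^\perp)$-linearity to the case of algebra generators $x \in e^\perp$, unify the even/odd definitions of $\pi_e$ through a single parity-dependent sign, and then apply the one identity that is doing all the work: the Clifford anticommutation $ex = -xe$, which holds because $(e|x) = 0$.

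The action of $\Cl(e^\perp)$ on the target $\Cl(V_e)\ol{f}$ factors through the canonical surjection $\Cl(e^\perp) \surj \Cl(V_e)$ of Clifford algebras, which is an algebra homomorphism. Since $\Cl(e^\perp)$ is generated as an algebra by $e^\perp$, it suffices to prove $\pi_e(x \cdot af) = \ol{x}\cdot \pi_e(af)$ for every $x \in e^\perp$ and every homogeneous $a \in \Cl(V)$. For such an $a$ of parity $|a|\in\{0,1\}$, both clauses of the definition can be rewritten uniformly as
\[
\pi_e(af) \;=\; \text{image in } \Cl(V_e)\ol{f} \text{ of } \tfrac{1}{2}\bigl((-1)^{n-1+|a|}\, eaf + afe\bigr),
\]
where $|a|=0$ reproduces the even formula and $|a|=1$ the odd one.

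Fix $x \in e^\perp$. Since $|xa|=|a|+1 \pmod{2}$, a representative of $\pi_e(xaf)$ is
\[
\tfrac{1}{2}\bigl((-1)^{n+|a|}\, exaf + xafe\bigr).
\]
The Clifford relation \eqref{eq:cliff-rel-2} gives $ex + xe = 2(e|x) = 0$, hence $exaf = -xeaf$, so this expression collapses to
\[
\tfrac{1}{2}\bigl((-1)^{n+|a|+1}\, xeaf + xafe\bigr) \;=\; x \cdot \tfrac{1}{2}\bigl((-1)^{n-1+|a|}\, eaf + afe\bigr),
\]
which is exactly $x$ left-multiplied with a representative of $\pi_e(af)$. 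Left multiplication by $x \in e^\perp$ preserves $\Cl(e^\perp)f_1\cdots f_{n-1}$, so the projection to $\Cl(V_e)\ol{f}$ remains well defined throughout; applying that algebra projection sends $x$ to $\ol{x}$ and yields $\pi_e(xaf) = \ol{x}\cdot\pi_e(af)$, completing the proof.

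No serious obstacle is expected: the only delicate point is the sign bookkeeping needed to merge the two parity cases. Once the formula with exponent $n-1+|a|$ is written down, moving $e$ past $x$ flips exactly one sign, and the pre-images of $\pi_e(xaf)$ and $x \cdot \pi_e(af)$ become manifestly equal in $\Cl(e^\perp)f_1\cdots f_{n-1}$ before projection.
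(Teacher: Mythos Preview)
Your proof is correct and follows essentially the same approach as the paper: reduce to generators $x\in e^\perp$, use the Clifford anticommutation $ex=-xe$ from $(e|x)=0$, and read off that left multiplication by $x$ pulls through the defining formula for $\pi_e$. The only difference is cosmetic: the paper writes out the single case $a\in\Cl^-(V)$ explicitly and leaves the other implicit, whereas you package both parities into the exponent $n-1+|a|$; your added remark that left multiplication by $x\in e^\perp$ preserves $\Cl(e^\perp)f_1\cdots f_{n-1}$ before projection is a detail the paper absorbs into the word ``clearly.''
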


\begin{proof}
Let $v \in e^\perp$ and consider $a \in \Cl^-(V)$. Then $va \in
\Cl^+(V)$ and hence $\pi_e(vaf)$
is the image in $\Cl(V_e)\overline{f}$ of 
\[ \frac{1}{2}((-1)^{n-1} evaf + vafe) = 
\frac{1}{2}((-1)^{n} veaf + vafe) = 
v \frac{1}{2}((-1)^n eaf + afe),
\]
where we have used $(v|e)=0$ in the first equality. The right-hand
side clearly equals $\overline{v}$ times the image of $\pi_e(af)$
in $\Cl(V_e) \overline{f}$.
\end{proof}

\subsection{Multiplying with an isotropic vector}
\label{ssec:Multiplication}

In a sense dual to the contraction maps $c_e:\Wedge^n V \to
\Wedge^{n-1} V_e$ are multiplication maps defined as follows. Let
$e,h \in V$ be isotropic with $(e|h)=1$; such a pair is called a
\textit{hyperbolic pair}. We then have $V=\langle e,h \rangle \oplus
\langle e,h \rangle^\perp$, and the map from the second summand to
$V_e=e^\perp/\langle e \rangle$ is an isometry. We use this isometry
to identify $V_e$ with the subspace $\langle e,h \rangle^\perp$ of
$V$ and write $s_e$ for the corresponding inclusion map.
Then we define 
\[ 
    m_h:\Wedge^{n-1} V_e \to \Wedge^n V, \quad \ol{v}_1 \wedge \cdots \wedge \ol{v}_{n-1} \mapsto h \wedge \ol{v}_1 \wedge \cdots \wedge \ol{v}_{n-1},
\]
which is just the outer product $o(h)$. The projectivisation of this map sends $\Gr_{\iso}(V_e)$ isomorphically to the closed subset of $\Gr_{\iso}(V)$ consisting of all $H$ containing $h$. We further observe that 
\[
    c_e \circ m_h=\id_{\Wedge^{n-1} V_e}. 
\]
We define a corresponding multiplication map at the level of spin
representations as follows: first, we assume that $h \in F$, and choose a basis $f_1,\ldots,f_n=h$ of $F$ such that $(e|f_i)=\delta_{in}$. As usual, we set $f=f_1 \cdots f_n$ and $\ol{f}=\ol{f}_1 \cdots \ol{f}_{n-1}$. Then we define
\[
    \tau_h:\Cl(V_e)\ol{f} \to \Cl(V) f,\quad \tau_h(a \ol{f}) := a \ol{f}f_n=a f.
\]
Note that, for $a \in \Cl(V_e)$, we have
\[ 
    \pi_e(\tau_h (a \ol{f}))=\pi_e(a f)=a \ol{f},
\]
where the last identity can be seen verified in the model $\Wedge E$
for the spin representation, where $\pi_e$ is the reduction-mod-$e$ map,
and $\tau_h$ is just the inclusion $\Wedge E/\langle e \rangle \to \Wedge
E$ corresponding to the inclusion $V_e \to V$.
So $\pi_e \circ \tau_h=\id_{\Cl(V_e)\ol{f}}$. We
will informally call $\tau_h$ the multiplication map with $h$.

\begin{proposition} \label{prop:tauhom}
The multiplication map $\tau_h:\Cl(V_e)\ol{f} \to \Cl(V)f$ is a homomorphism of
$\Cl(V_e)$-representations, where $\Cl(V_e)$ is regarded a subalgebra
of $\Cl(V)$ via the section $s_e:V_e \to V$.
\end{proposition}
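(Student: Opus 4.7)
The plan is to observe that, with respect to the natural inclusions of Clifford algebras, the map $\tau_h$ is literally just right multiplication by $h=f_n$, and then to invoke the associativity of $\Cl(V)$.

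First I would check that the chosen basis $f_1,\ldots,f_{n-1},f_n=h$ is compatible with the section $s_e$. Since $F$ is isotropic, $(h|f_i)=(f_n|f_i)=0$, and by choice $(e|f_i)=\delta_{in}$, so for $i<n$ the vector $f_i$ lies in $\langle e,h\rangle^\perp$ and thus equals $s_e(\overline{f}_i)$. Consequently the algebra embedding $\Cl(V_e)\hookrightarrow\Cl(V)$ induced by $s_e$ carries $\overline{f}=\overline{f}_1\cdots\overline{f}_{n-1}$ to $f_1\cdots f_{n-1}$, whence $s_e(\overline{f})\cdot h=f$.

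Next I would rewrite the definition. Every element of $\Cl(V_e)\overline{f}$ has the form $a\overline{f}$ for some $a\in\Cl(V_e)$, and the formula $\tau_h(a\overline{f}):=af$ really means $\tau_h(a\overline{f})=s_e(a)\cdot f=s_e(a)\cdot s_e(\overline{f})\cdot h$. Thus $\tau_h$ is the composition of the $\Cl(V_e)$-linear inclusion $\Cl(V_e)\overline{f}\hookrightarrow\Cl(V)$ (induced by $s_e$) with right multiplication by $h$.

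The $\Cl(V_e)$-equivariance then follows instantly from associativity: for $b\in\Cl(V_e)$ (acting on $\Cl(V)f$ via $s_e$) and $a\overline{f}\in\Cl(V_e)\overline{f}$, one has
\[
\tau_h\bigl(b\cdot(a\overline{f})\bigr)=\tau_h\bigl((ba)\overline{f}\bigr)=s_e(ba)\,f=s_e(b)\,s_e(a)\,f=s_e(b)\cdot\tau_h(a\overline{f}),
\]
which is exactly the required intertwining relation. Since there is genuinely no analytic or combinatorial obstruction here, the only thing one has to be careful about is the first step, where the concrete identification $s_e(\overline{f}_i)=f_i$ for $i<n$ is what makes the right-multiplication-by-$h$ description of $\tau_h$ correct; without that compatibility of the basis with $s_e$, the formula $\tau_h(a\overline{f})=af$ would not be manifestly a map of Clifford modules.
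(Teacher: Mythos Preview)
Your proof is correct and follows essentially the same route as the paper: both recognise that $\tau_h$ is right multiplication by $f_n=h$ after embedding $\Cl(V_e)$ into $\Cl(V)$, and then invoke associativity. The paper's version is terser---it checks equivariance only for $v\in V_e$ (which suffices since $V_e$ generates $\Cl(V_e)$)---whereas you treat a general $b\in\Cl(V_e)$ directly and are more explicit about why the chosen $f_i$ are compatible with the section $s_e$; these are differences of detail, not of substance.
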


\begin{proof}
Let $v \in V_e$ and let $a \in \Cl(V_e)$. Then 
\[ \tau_h(va\ol{f}) = va\ol{f}f_n = vaf, \]
as desired. 
\end{proof}

\begin{corollary}
Both the map $\pi_e: \Cl(V)f \to \Cl(V_e)\ol{f}$ and the map $\tau_h
:\Cl(V_e)\ol{f} \to \Cl(V)f$ are $\Spin(V_e)$-equivariant, where
$\Spin(V_e)$ is regarded as a subgroup of $\Spin(V)$ via the orthogonal
decomposition $V=V_e \oplus \langle e,h \rangle$.
\end{corollary}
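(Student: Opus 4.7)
The plan is to deduce both equivariance statements directly from Propositions~\ref{prop:pihom} and~\ref{prop:tauhom}, once the embedding $\Spin(V_e) \hookrightarrow \Spin(V)$ has been matched with the restriction of the algebra inclusion $\Cl(V_e) \hookrightarrow \Cl(V)$.

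First I would observe that the section $s_e \colon V_e \to V$ is an isometry onto $\langle e,h \rangle^\perp$, and hence extends functorially to an inclusion of associative algebras $\Cl(V_e) \hookrightarrow \Cl(V)$ that is contained in $\Cl(e^\perp)$, restricts to $\Cl^+(V_e) \hookrightarrow \Cl^+(V)$, and (via the identification of Section~\ref{ssec:embedding}) maps $\so(V_e)$ into $\so(V)$. Since $\Spin$ is defined in Section~\ref{ss:spin-rep} as the subgroup of the $\GL$ of the spin representation generated by exponentials of root vectors, this identifies the embedding $\Spin(V_e) \hookrightarrow \Spin(V)$ with the restriction of the algebra map $\Cl(V_e) \hookrightarrow \Cl(V)$; on either half-spin representation, $\Spin(V_e)$ then acts by left Clifford multiplication by the corresponding element of $\Cl^+(V_e)$.

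With this identification in hand, it suffices to verify that $\pi_e$ and $\tau_h$ each intertwine left Clifford multiplication by every root vector $X \in \so(V_e)$, because $\Spin(V_e)$ is generated as an algebraic group by the one-parameter subgroups $t \mapsto \exp(tX)$ coming from such $X$. For $\pi_e$ this is immediate from Proposition~\ref{prop:pihom}, applied to $X \in \so(V_e) \subseteq \Cl(V_e) \subseteq \Cl(e^\perp)$, whose image under the projection $\Cl(e^\perp) \to \Cl(V_e)$ is $X$ itself. For $\tau_h$ it is immediate from Proposition~\ref{prop:tauhom}, applied to $X \in \so(V_e) \subseteq \Cl(V_e)$. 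Exponentiating these Lie-algebra-level intertwining identities and passing to the subgroup they generate then yields the required $\Spin(V_e)$-equivariance in both cases.

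The only (very mild) obstacle is the bookkeeping in the first step: checking that the embedding $\Spin(V_e) \hookrightarrow \Spin(V)$ induced by the orthogonal splitting $V = V_e \oplus \langle e,h \rangle$ really is realised by the algebra inclusion $\Cl(V_e) \hookrightarrow \Cl(V)$, so that the Clifford-module-equivariance in the two propositions transports to the $\Spin(V_e)$-equivariance asserted in the corollary. Once this is noted there is nothing more to prove.
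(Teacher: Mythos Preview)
Your proposal is correct and follows essentially the same approach as the paper: invoke Propositions~\ref{prop:pihom} and~\ref{prop:tauhom} to obtain $\so(V_e)$-equivariance, then use that $\Spin(V_e)$ is generated by one-parameter subgroups coming from nilpotent (root) elements of $\so(V_e)$ to pass to group equivariance. The paper's proof is two sentences and leaves implicit the bookkeeping about the embedding $\Spin(V_e)\hookrightarrow\Spin(V)$ that you spell out in more detail.
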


\begin{proof}
Propositions~\ref{prop:pihom} and~\ref{prop:tauhom} imply that both
maps are homomorphisms of $\so(V_e)$-representations. Since
$\Spin(V_e)$ is generated by one-parameter subgroups corresponding to
nilpotent elements of $\so(V_e)$, $\pi_e$ and $\tau_h$ are $\Spin(V_e)$-equivariant.
\end{proof}

\subsection{Properties of the isotropic Grassmannian}

The goal of this subsection is to collect properties of the isotropic
Grassmann cone that will later motivate the definition of a \textit{(half-)spin variety} (see \Cref{s:spin-vars}). We fix a maximal isotropic
subspace $F \subseteq V$ and a hyperbolic pair $(e,h)$ with $h \in
F$ and $e \not \in F$ and identify $V_e=e^\perp/\langle e \rangle$
with the subspace $\langle e,h \rangle^\perp$ of $V$. We choose any
basis $f_1,\ldots,f_n$ of $F$ with $f_n=h$ and $(e|f_i)=0$ for $i<n$
and write $f:=f_1 \cdots f_n \in \Cl(V)$ and $\ol{f}:=\overline{f_1}
\cdots \overline{f_{n-1}} \in \Cl(V_e)$.

\begin{proposition}\label{prop:properties}

The isotropic Grassmann cone in $\Cl(V)f$ has the following
properties:
    \begin{enumerate}
        \item $\widehat{\Gr}_{\iso}(V) \subseteq \Cl(V)f$ is Zariski-closed and $\Spin(V)$-stable.

        \item \label{i2:properties} Let $\pi_e: \Cl(V) f \to \Cl(V_e) \ol{f}$ be the
	contraction defined in \S\ref{ssec:Contraction}. Then for
	every maximal isotropic subspace $H \subseteq V$ we have 
        \[
             \pi_e(S_H) \subseteq S_{H_e},
        \]
	where $H_e \subseteq V_e$ is the image of $e^\perp \cap H$
	in $V_e$.

        \item Let $\tau_h: \Cl(V_e) \ol{f} \to \Cl(V) f$ be the map defined in \S\ref{ssec:Multiplication}. Then for every maximal isotropic $H' \subseteq V_e$ we have
        \[
             \tau_h(S_{H'}) = S_{{H'} \oplus \lspan{h}}.
        \]    

    \end{enumerate}
    In particular, the contraction and multiplication map $\pi_e$ and
    $\tau_h$ preserve the isotropic Grassmann cones, i.e., 
    \[
        \pi_e\big(\widehat{\Gr}_{\iso}(V)\big) \subseteq
	\widehat{\Gr}_{\iso}(V_e) \quad \text{and} \quad
	\tau_h\big(\widehat{\Gr}_{\iso}(V_e)\big) \subseteq \widehat{\Gr}_{\iso}(V).
    \]   
\end{proposition}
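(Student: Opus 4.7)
The plan is to prove the three numbered assertions in turn; the ``in particular'' statement then follows immediately from (2) and (3) by taking unions over all maximal isotropic subspaces.

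For (1), closedness follows from the observation that $H \mapsto [\omega_H]$ is an injective morphism from the classical projective isotropic Grassmannian $\Gr_{\iso}(V)$ into $\PP(\Cl(V)f)$; since $\Gr_{\iso}(V)$ is projective, the image is Zariski-closed in $\PP(\Cl(V)f)$, and $\widehat{\Gr}_{\iso}(V)$ is the affine cone over it. For $\Spin(V)$-stability, I would invoke the Clifford group model recalled in the earlier remark: any $g \in \Spin(V) \subseteq \Gamma(V)$ satisfies $gVg^{-1} \subseteq V$, and for $v \in H$ the identity $(gvg^{-1}) \cdot (g \omega_H) = g(v \omega_H) = 0$ shows $g \omega_H \in S_{gHg^{-1}}$, where $gHg^{-1}$ is again a maximal isotropic subspace. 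Alternatively, one may argue via one-parameter subgroups: each $\exp(t\rho(X))$ descends to an action on $\PP(\Cl(V)f)$ that agrees under the spinor embedding with the classical $\SO(V)$-action on $\Gr_{\iso}(V)$, and hence preserves $\widehat{\Gr}_{\iso}(V)$.

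For (2), I would apply \Cref{prop:pihom}, which makes $\pi_e$ a homomorphism of $\Cl(e^\perp)$-modules. For any $v \in H \cap e^\perp$, its image $\ol{v} \in V_e$ lies in $H_e$, and every element of $H_e$ arises this way. Since $v \omega_H = 0$, equivariance gives $\ol{v} \cdot \pi_e(\omega_H) = \pi_e(v \omega_H) = 0$, so $\pi_e(\omega_H) \in S_{H_e}$, and hence $\pi_e(S_H) \subseteq S_{H_e}$.

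For (3), \Cref{prop:tauhom} gives $v \cdot \tau_h(\omega_{H'}) = \tau_h(v \omega_{H'}) = 0$ for every $v \in H'$. To show $h \cdot \tau_h(\omega_{H'}) = 0$ as well, write $\omega_{H'} = a \ol{f}$ with $a \in \Cl(V_e)$; then $\tau_h(a\ol{f}) = a \cdot f_1 \cdots f_{n-1} \cdot h$. Using that $h$ anticommutes with each element of $V_e$ (since $h \perp V_e$) and with each $f_i$ for $i < n$ (since $F$ is isotropic), and that $h^2 = q(h) = 0$, a short direct calculation gives $h \cdot \tau_h(a\ol{f}) = 0$. Thus $\tau_h(\omega_{H'})$ is annihilated by all of $H' \oplus \langle h \rangle$, so $\tau_h(S_{H'}) \subseteq S_{H' \oplus \langle h \rangle}$. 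Equality follows from $\pi_e \circ \tau_h = \id$ (so $\tau_h$ is injective on the one-dimensional $S_{H'}$) together with $\dim S_{H' \oplus \langle h \rangle} = 1$. The hard part will be the $\Spin(V)$-stability in (1), because the paper's working definition of $\Spin(V)$ is as the subgroup of $\GL(\Cl(V)f)$ generated by one-parameter unipotent subgroups; one must either invoke the Clifford group description to get $g \omega_H = \omega_{gHg^{-1}}$ conceptually, or verify by hand that the induced projective action agrees with the natural $\SO(V)$-action on the spinor-embedded $\Gr_{\iso}(V)$.
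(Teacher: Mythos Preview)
Your arguments for parts (2) and (3) are essentially identical to the paper's: both invoke \Cref{prop:pihom} and \Cref{prop:tauhom} exactly as you do, and the paper's verification that $h \cdot \tau_h(\omega_{H'}) = 0$ is the same anticommutation-then-$h^2=0$ computation you sketch (stated more tersely there as ``$h \cdot \omega_{H'} f_n = 0$, using $h=f_n$''). The equality in (3) is likewise deduced from injectivity of $\tau_h$ together with $\dim S_{H'\oplus\langle h\rangle}=1$.

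For part (1) your route differs from the paper's. The paper invokes the standard representation-theoretic fact that the cones $\widehat{\Gr}^\pm_{\iso}(V)$ are precisely $\{0\}$ together with the $\Spin(V)$-orbits of the highest-weight vectors $\omega_0$ and $\omega_1$ in the two half-spin representations; such minimal (highest-weight) orbits in projectivised irreducible representations are always Zariski-closed, and they are $\Spin(V)$-stable by construction. This is a one-line appeal to a well-known theorem and sidesteps both difficulties you flag. Your approach---properness of the classical $\Gr_{\iso}(V)$ for closedness, and the Clifford-group model $g\omega_H \in S_{gHg^{-1}}$ for stability---is also correct, but it requires you to take for granted that the spinor map $H \mapsto [\omega_H]$ is a morphism (standard, though not proved in the paper) and to reconcile the paper's generator-based definition of $\Spin(V)$ with the Clifford-group description, exactly as you anticipate in your final paragraph. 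The paper's highest-weight-orbit argument is more economical because closedness and stability come simultaneously from the same structural fact.
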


\begin{proof}[Proof of \Cref{prop:properties}]
\begin{enumerate}
\item This is well known. Indeed, the isotropic Grassmann cone is the
union of the cones over the two connected components, and these cones
are the union of $\{0\}$ with the orbits of the highest weight vectors
$\omega_0$ and $\omega_1$. These minimal orbits are always Zariski
closed. For more detail see \cite[Theorem 1, p.428]{Pro07}.

\item Let $\omega_H$ be a spanning element of $S_H$. Then for all $v
\in e^\perp \cap H$ we have 
\[ \ol{v} \cdot \pi_e(\omega_H) = \pi_e(v \cdot \omega_H) =
\pi_e(0)=0,
\]
where the first equality follows from Proposition~\ref{prop:pihom}.
Hence $\pi_e(\omega_H)$ lies in $S_{H_e}$.

\item Let $\omega_{H'}$ be a spanning element of $S_{H'}$.
Then for all $v \in H'$ we have 
\[ v \cdot \tau_h(\omega_{H'}) = \tau_h (v \cdot \omega_{H'}) = \tau_h
(0)=0 \]
where the first equality holds by Proposition~\ref{prop:tauhom}.
Furthermore, we have 
\[ h \cdot \tau_h(\omega_{H'}) = h \cdot \omega_{H'} f_n = 0, \]
where we used the definition of $\tau_h$ and $h=f_n$. Thus
$\tau_h(\omega_{H'})$ lies in $S_{{H'} \oplus \lspan{h}}$. The equality now
follows from the fact that $\tau_h$ is injective. \qedhere
\end{enumerate}

\end{proof}

\begin{remark}
If $h \in H$, then $H=H_e \oplus \lspan{h}$ and since $\pi_e \circ
\tau_h$ is the identity on $\Cl(V_e) \ol{f}$ we find that
\[ \pi_e(S_H)=\pi_e(\tau_h(S_{H_e}))=S_{H_e}, \]
i.e., equality holds in \eqref{i2:properties} of \cref{prop:properties}. Later we will see that
equality holds under the weaker condition that $e \not \in H$, while
$\pi_e(S_H)=\{0\}$ when $e \in H$. These statements can also be checked by 
direct computations, but some care is needed since for $e,H,F$ in general
position one cannot construct a hyperbolic basis adapted to $H$ and $F$
that moreover contains $e$.
\end{remark}

\subsection{The dual of contraction}
\label{ssec:dual}

Let $e \not \in F \subseteq V$ be an isotropic vector. We want to compute the dual of the contraction map $\pi_e: \Cl(V)f \to \Cl(V_e)\ol{f}$; indeed, we claim that this is essentially the map
\[
    \psi_e: \Cl(V_e) \ol{f} \to \Cl(V) f 
\]
defined by its restriction $\Cl^\pm(V_e) \ol{f} \to \Cl^\mp(V) f$ as
\[ 
    \psi_e (\ol{b} \cdot \ol{f}_1 \cdots \ol{f_{n-1}}):= \pm e b f_1 \cdots f_n, 
\]
where the sign is $+$ on $\Cl^+(V_e) \ol{f}$ and $-$ on $\Cl^-(V_e) \ol{f}$. The reason for the ``flip'' of the choice of half-spin representation in the dual will become obvious below. Observe that $\psi_e$ is well-defined and, given a basis $e_1,\ldots,e_n=e$ of an isotropic space complementary to $F$ such that $e_1,\ldots,e_n,f_1,\ldots,f_n$ is a hyperbolic basis, maps $\overline{e_J} \ol{f}$ to $e_{J \cup \{n\}} f$. 

\begin{proposition} \label{prop:PsiDual}
    The following diagram:
    \[
        \xymatrix{(\Cl(V_e) \ol{f})^* \ar[d]_{\isom} \ar[r]^{\pi_e^*} & (\Cl(V)f)^* \ar[d]^{\isom} \\
        \Cl(V_e) \ol{f} \ar[r]_{\psi_e} & \Cl(V)f}
    \]
    can be made commuting via a $\Spin(V_e)$-module isomorphism on the left vertical arrow and a $\Spin(V)$-module isomorphism on the right vertical arrow. 
\end{proposition}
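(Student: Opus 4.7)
The plan is to construct $\Spin(V)$- and $\Spin(V_e)$-equivariant self-dualities on $\Cl(V)f$ and $\Cl(V_e)\ol{f}$ from suitable invariant bilinear forms, use these as the vertical isomorphisms, and then verify commutativity of the diagram directly on a basis.

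For the form on $\Cl(V)f$, working in the model $\Cl(V)f \cong \Wedge E$ of \S\ref{ssec:ExplicitFormulas}, I would set
\[
    \beta_V(\omega_1, \omega_2) := \text{coefficient of } e_1 \wedge \cdots \wedge e_n \text{ in } \alpha(\omega_1) \wedge \omega_2,
\]
where $\alpha$ is the anti-involution of $\Wedge E$ that reverses wedge products. Using the explicit formulas $\rho(e_i \wedge e_j) = \tfrac{1}{2} o(e_i) o(e_j)$, $\rho(f_i \wedge f_j) = 2\iota(f_i) \iota(f_j)$, and $\rho(e_i \wedge f_j) = \tfrac{1}{2}(o(e_i) \iota(f_j) - \iota(f_j) o(e_i))$, a short computation shows that each of these root vectors acts skew-symmetrically with respect to $\beta_V$. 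Since $\Spin(V)$ is generated by exponentials of such nilpotent operators, $\beta_V$ is $\Spin(V)$-invariant, yielding a $\Spin(V)$-equivariant isomorphism $\Phi_V \colon \Cl(V)f \to (\Cl(V)f)^*$; the same recipe applied to $V_e$ furnishes $\Phi_{V_e}$.

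Commutativity of the diagram then reduces to the pairing identity
\[
    \beta_{V_e}(\ol{\omega}, \pi_e(\omega)) = \beta_V(\psi_e(\ol{\omega}), \omega) \quad \text{for all } \ol{\omega} \in \Cl(V_e)\ol{f},\ \omega \in \Cl(V)f,
\]
which I would verify on the basis vectors $\ol{\omega} = \ol{e_J}\,\ol{f}$ and $\omega = e_I f$. Both sides vanish unless $n \notin I$ and $J$ is the complement of $I$ in $\{1,\ldots,n-1\}$. In that case, the $\pm$ built into $\psi_e$ is calibrated so that the Clifford commutation $e_n e_J = (-1)^{|J|} e_{J\cup\{n\}}$ is absorbed, giving $\psi_e(\ol{e_J}\,\ol{f}) = e_{J\cup\{n\}} f$; the comparison then reduces to a shuffle-sign computation in which the two sides differ by a global factor of $(-1)^{n-1}$ arising from sliding $e_n$ past the elements of $e_I$. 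For $n$ odd this factor is $+1$; for $n$ even one absorbs it by rescaling $\Phi_V$ (or $\Phi_{V_e}$) by $-1$, yielding a still $\Spin$-equivariant isomorphism that makes the diagram commute.

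The main obstacle is the sign bookkeeping in establishing $\so(V)$-invariance of $\beta_V$: the operators $o(e_i)$ and $\iota(f_i)$ individually are only anti-adjoint up to degree-dependent signs, and one must verify that these signs cancel in precisely the combinations appearing in the $\so(V)$-generators. A cleaner but less explicit alternative is to argue representation-theoretically---the full spin representation $\Cl(V)f$ is self-dual as a $\Spin(V)$-module for every parity of $n$, so the vertical isomorphisms exist abstractly, and Schur's lemma applied to $\pi_e^*$ and $\psi_e$ as $\Spin(V_e)$-equivariant maps between the corresponding irreducible $\Spin(V_e)$-summands forces the diagram to commute up to a scalar on each summand, which can then be normalised away.
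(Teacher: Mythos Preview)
Your main approach coincides with the paper's: both construct a $\Spin$-invariant nondegenerate pairing on the spin representation and then verify the adjunction identity $\beta_{V_e}(\ol{\omega},\pi_e(\omega))=c\cdot\beta_V(\psi_e(\ol{\omega}),\omega)$ on basis vectors $e_I f$ and $\ol{e_J}\,\ol{f}$, absorbing the resulting global scalar into one of the vertical isomorphisms. The only difference is cosmetic: the paper uses the standard Clifford form $\beta(af,bf):=$ coefficient of $f$ in $f^*a^*bf$ and cites Procesi for its $\Spin$-invariance, whereas you rewrite the same form (up to a power of $2$) in the $\Wedge E$-model and propose to check invariance on root vectors by hand.

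One caution about your alternative Schur-lemma argument: it does not go through as stated. As a $\Spin(V_e)$-module, $\Cl(V)f\cong\Wedge E_{n-1}\oplus(e_n\wedge\Wedge E_{n-1})$ contains each half-spin representation of $\Spin(V_e)$ with multiplicity \emph{two}, so $\Spin(V_e)$-equivariance alone does not force the transported $\pi_e^*$ and $\psi_e$ to be proportional on each irreducible summand. To rescue the argument you would first need to show that both maps land in the same copy $e_n\wedge\Wedge E_{n-1}$; but checking this for $\pi_e^*$ already requires computing the annihilator of $\ker\pi_e$ under your explicit form $\beta_V$, at which point the direct basis computation is no longer a detour.
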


\begin{remark}\label{rem:Flip in PsiDual}
    The statement of \cref{prop:PsiDual} holds true when replacing $\Cl(V) f$ by either one of the two half-spin representations by considering the correct ``flip''. For example, if $n = \dim F$ is even, and $e_1,\dots, e_n, f_1, \dots, f_n$ is a hyperbolic basis as above, then in the $\Wedge E$-model the correct grading is
    \[
        \xymatrix{\left(\Wedge^+E_{n-1}\right)^* \ar[d]_{\isom} \ar[r]^{\pi_{e_n}^*} & \left(\Wedge^+E_n\right)^* \ar[d]^{\isom} \\
        \Wedge^-E_{n-1} \ar[r]_{\psi_{e_n}} & \Wedge^+ E_n.}
    \]
\end{remark}

To prove \cref{prop:PsiDual} we consider the bilinear form $\beta$ on
the spin representation $\Cl(V)f$ defined as in \cite{Pro07} as follows:
for $af,bf \in \Cl(V)f$ it turns out that $(af)^*bf=f^*a^*bf$, where $*$
denotes the anti-automorphism from \S\ref{ssec:Clifford}, is a scalar
multiple of $f$. The scalar is denoted $\beta(af,bf)$.  We have the
following properties:

\begin{lemma}[{\cite[p. 430]{Pro07}}] Let $\beta$ be the bilinear form defined as above.
    \label{lem:procesi-pairing}
    \begin{enumerate}
        \item The form $\beta$ is nondegenerate and $\Spin(V)$-invariant.
        \item $\beta$ is symmetric if $n \equiv 0,1 \mod 4$, and it is skew-symmetric if $n \equiv 2,3 \mod 4$.
        \item The two half-spin representations are self-dual via $\beta$ if $n$ is even, and each is the dual of the other if $n$ is odd.
    \end{enumerate}
\end{lemma}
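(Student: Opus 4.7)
The plan is to derive an explicit formula for $\beta$ in terms of the basis $\{e_I f\}_{I \subseteq [n]}$ of $\Cl(V)f \simeq \Wedge E$, from which all four assertions will fall out by formal manipulation with the involution~$*$ and the $\Z/2\Z$-grading of $\Cl(V)$.

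\textbf{Step 1 (explicit formula).} First I would show that for any $y \in \Cl(V)$, the product $f \cdot y \cdot f$ is a scalar multiple of $f$, and compute that scalar. Writing $y = \sum_{I,J} c_{I,J}\, e_I f_J$ in the standard basis of $\Cl(V)$, the relation $f_i^2 = 0$ forces $f_J \cdot f = 0$ whenever $J \neq \emptyset$, so $yf = \sum_I c_{I,\emptyset}\, e_I f$. In the $\Wedge E$-model of \S\ref{ssec:ExplicitFormulas}, left-multiplication by $f_j$ acts as $2\iota(f_j)$; applying $f = f_1 \cdots f_n$ to $e_I \in \Wedge E$ annihilates $e_I$ unless $I = [n]$, in which case the output equals $2^n (-1)^{\binom{n}{2}}$. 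Combined with the identity $f^* = (-1)^{\binom{n}{2}} f$ (obtained by reversing $f_1 \cdots f_n$), this yields the clean formula
\[
    \beta(af, bf) \;=\; 2^n \cdot \bigl[\text{coefficient of } e_{[n]} \text{ in } a^* b \bigr].
\]

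\textbf{Step 2 (nondegeneracy and $\Spin(V)$-invariance).} Applied to the basis, the formula simplifies to $\beta(e_I f, e_J f) = \pm 2^n\, \delta_{J, [n] \setminus I}$, because $e_I^* e_J = \pm e_{I \cup J}$ when $I \cap J = \emptyset$ and vanishes otherwise; so $\beta$ pairs every basis vector with a unique partner and is nondegenerate. Invariance follows from the fact, recalled in \S\ref{ss:spin-rep}, that $g \in \Spin(V)$ satisfies $g g^* = 1$, hence $g^* = g^{-1}$:
\[
    \beta(g \cdot af, g \cdot bf)\, f \;=\; f^*(ga)^*(gb) f \;=\; f^* a^* (g^* g)\, b f \;=\; f^* a^* b f \;=\; \beta(af, bf)\, f.
\]

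\textbf{Step 3 (symmetry type and parity pairing).} Applying the anti-involution $*$ to the defining equation $\beta(af, bf) f = f^* a^* b f$ produces $\beta(af, bf) f^* = f^* b^* a f = \beta(bf, af) f$, and substituting $f^* = (-1)^{\binom{n}{2}} f$ gives $\beta(af, bf) = (-1)^{\binom{n}{2}} \beta(bf, af)$. Since $\binom{n}{2}$ is even precisely when $n \equiv 0, 1 \pmod 4$, this yields (2). For (3) I would use the $\Z/2\Z$-grading: for $a \in \Cl^\epsilon(V)$ and $b \in \Cl^{\epsilon'}(V)$, the product $a^* b$ is homogeneous of degree $\epsilon + \epsilon'$, whereas $e_{[n]} \in \Cl^{n \bmod 2}(V)$, so the coefficient appearing in the formula of Step 1 can be nonzero only when $\epsilon + \epsilon' \equiv n \pmod 2$. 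Hence $\beta$ restricts to nondegenerate pairings on each of $\Cl^\pm(V) f$ when $n$ is even, and to a perfect pairing between $\Cl^+(V) f$ and $\Cl^-(V) f$ when $n$ is odd.

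\textbf{Main obstacle.} The whole proof is driven by the formula in Step 1; the only delicate point is getting the two sign factors $(-1)^{\binom{n}{2}}$ right---one from $f^* = (-1)^{\binom{n}{2}} f$ and one from $f \cdot e_{[n]} f = 2^n (-1)^{\binom{n}{2}} f$---which happily cancel to leave a clean $2^n$. Once that bookkeeping is in hand, parts (1)--(3) are essentially formal consequences of the $*$-involution and the $\Z/2\Z$-grading on $\Cl(V)$.
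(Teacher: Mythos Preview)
The paper does not prove this lemma at all; it is stated with a citation to \cite[p.~430]{Pro07} and then used as a black box in the proof of Proposition~\ref{prop:PsiDual}. Your argument is therefore not a comparison but a self-contained proof, and it is correct. The explicit formula in Step~1 is exactly the right engine: the two $(-1)^{\binom{n}{2}}$ factors do cancel as you say, and from there nondegeneracy, the symmetry type, and the parity pairing all follow mechanically.

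One small inaccuracy in your write-up: the identity $gg^{*}=1$ for $g\in\Spin(V)$ is not recalled in \S\ref{ss:spin-rep} (which is about the half-spin representations) but in the unnumbered Remark following the definition of the spin group in \S2.7. More substantively, the paper \emph{defines} $\Spin(V)$ as the subgroup of $\GL(\Wedge E)$ generated by certain one-parameter subgroups, and only identifies it with the Clifford-algebra subgroup $\{x:xx^{*}=1\}$ in that Remark without proof. If you want your invariance argument to match the paper's primary definition, it is cleaner to verify infinitesimal invariance directly: for $X=\psi(u\wedge v)=uv-vu\in\so(V)\subseteq\Cl(V)$ one has $X^{*}=-X$, so
\[
\beta(Xaf,bf)+\beta(af,Xbf)
=2^{n}\bigl[\text{coeff.\ of }e_{[n]}\text{ in }(Xa)^{*}b+a^{*}Xb\bigr]
=2^{n}\bigl[\text{coeff.\ of }e_{[n]}\text{ in }a^{*}(X^{*}+X)b\bigr]=0,
\]
and then exponentiate. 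Either route is fine; the point is only that the citation should be adjusted.
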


In the proof of \cref{prop:PsiDual} we will use a hyperbolic
basis $e_1,\ldots,e_n,f_1,\ldots,f_n$ with $e_n=e$. For a subset
$I=\{i_1<\ldots<i_k\} \subseteq [n]$ set $e_I:=e_{i_1} \cdots e_{i_k}
\in \Cl(E) \isom \Wedge E$, where $E$ is the span of the $e_i$. We have seen in \S\ref{ssec:ExplicitFormulas}
that the spin representation has as a basis the elements $e_I f$ with $I$
running through all subsets of $[n]$.

\begin{proof}[Proof of \cref{prop:PsiDual}]
    Consider the bilinear forms $\beta$ on $\Cl(V)f$ and $\beta_e$
    on $\Cl(V_e)\ol{f}$ as defined above. By \cref{lem:procesi-pairing}
    the spin representations $\Cl(V)f$ and $\Cl(V_e)\ol{f}$ are self-dual
    via $\beta$ and $\beta_e$, respectively. Thus it suffices to prove,
    for $a \in \Cl(V)$ and $b \in \Cl(e^\perp)$, that
    \[ 
        \beta_e(\pi_e(af),\ol{b}\ol{f})= \frac{(-1)^{n-1}}{2} \beta(af,\psi_e(\ol{b}\ol{f})). 
    \]
    We may
    assume that $a = e_I$, $b = e_J$ with $I \subseteq [n]$, $J
    \subseteq [n-1]$. 

    In the $\Wedge E$-model $\pi_e$ is the mod-$e$ map, and hence the
    left-hand side is zero if $n \in I$. If $n \not \in I$, then the
    left-hand side equals the coefficient of $\ol{f}$ in $\ol{f}^*
    \ol{e_I}^* \ol{e_J} \ol{f}$.  This is nonzero if and only if
    $[n-1]$ is the disjoint union of $I$ and $J$, and then it is
    $2^{n-1}$ times a sign
    corresponding to the number of swaps needed to move the factors
    $\ol{f_i}$ of $\ol{f}^*$ to just before the corresponding factor
    $\ol{e_i}$ in either $\ol{e_I}^*$ or $\ol{e_J}$.

Apart from the factor $\frac{(-1)^{n-1}}{2}$, the right-hand side is the coefficient of $f$ in $f^* e_I e_J e_n
    f$. This is nonzero if and only if $[n]$ is the disjoint union of
    the sets $\{n\},J,I$, and in that case it is $2^n$ times a sign corresponding
    to the number of swaps needed to move the factors $f_i$ of $f^*$ to the
    corresponding factor $e_i$ in either $e_I$ or $e_J$ or (in the
    case of $f_n$) to just before the factor $e_n$. The latter
    contributes $(-1)^{n-1}$, and apart from this factor the sign is
    the same as on the left-hand side.
\end{proof}

\subsection{Two infinite spin representations}
\label{sec:infinte half-spin representations}

Let $V_\infty$ be the countable-dimensional vector space with basis $e_1,
f_1, e_2, f_2, \dots$, and equip $V_\infty$ with the quadratic form for
which this is a hyperbolic basis, i.e., $(e_i|e_j)=(f_i|f_j)=0$ and
$(e_i|f_j)=\delta_{ij}$ for all $i,j$. We write $E_\infty$ and $F_\infty$
for the subspaces of $V_\infty$ spanned by the $e_i$ and the $f_i$,
respectively. 

Let $V_n$ be the subspace of $V_\infty$ spanned by $e_1, f_1, e_2,
f_2, \dots, e_n, f_n$, with the restricted quadratic form. We further
set $E_n:=V_n \cap E_\infty$ and $F_n:=V_n \cap F_\infty$. 
We define the \textit{infinite spin group} as $\Spin(V_\infty):=\varinjlim_n \Spin(V_n)$  where $\Spin(V_{n-1})$ is embedded into $\Spin(V_n)$ as the subgroup that fixes $\lspan{e_n,f_n}$ element-wise. Similarly, we write $\GL(E_\infty):=\varinjlim_n \GL(E_n)$ and $H$ for the preimage of $\GL(E_\infty)$ in $\Spin(V_\infty)$. We use the notation $\so(V_\infty)$ and $\gl(E_\infty)$ for the corresponding direct limits of the Lie algebras $\so(V_n)$ and $\gl(E_n)$. Here the direct limits are taken in the categories of abstract groups, and Lie algebras, respectively.

The previous paragraphs give rise to various $\Spin(V_{n-1})$-equivariant maps between the spin representations of $\Spin(V_{n-1})$ and $\Spin(V_n)$. First, contraction with $e_n$,
\[ 
    \pi_{e_n}:\Cl(V_n)f_1 \cdots f_n \to \Cl(V_{n-1}) f_1 \cdots f_{n-1}, 
\]
 and second, multiplication with $f_n$, 
\[
    \tau_{f_n}:\Cl(V_{n-1})f_1 \cdots f_{n-1} \to \Cl(V_n) f_1 \cdots f_n. 
\]
We have that these satisfy $\pi_{e_n} \circ \tau_{f_n} =\id$. Third, the map
\[ 
    \psi_{e_n}:\Cl(V_{n-1})f_1 \cdots f_{n-1} \to \Cl(V_n) f_1 \cdots f_n 
\]
that is dual to $\pi_{e_n}$ in the sense of \Cref{prop:PsiDual}.

\begin{definition}
    The \textit{direct (infinite) spin representation} is the direct limit of all spaces $\Cl(V_n) f_1 \cdots f_n$ along the maps $\psi_{e_n}$. The \textit{inverse (infinite) spin representation} is the inverse limit of all spaces $\Cl(V_n) f_1 \cdots f_n$ along the maps $\pi_{e_n}$.
\end{definition}

Since the maps $\psi_{e_n},\pi_{e_n}$ are $\Spin(V_{n-1})$-equivariant, both of these spaces are $\Spin(V_\infty)$-modules. As the dual of a direct limit is the inverse limit of the duals, and since the maps $\psi_{e_n}$ and $\pi_{e_n}$ are dual to each other by \Cref{prop:PsiDual}, the inverse spin representation is the dual space of the direct spin representation.

In our model $\Wedge E_n$ of $\Cl(V_n) f_1 \cdots f_n$, the map
$\psi_{e_n}$ is just the right multiplication 
\[
    \Wedge E_{n-1} \to \Wedge E_n,\; \omega \mapsto \omega \wedge e_n.
\]
Hence the direct spin representation has as a basis formal infinite products
\[ 
    e_{i_1} \wedge e_{i_2} \wedge \ldots =: e_I
\]
where $I=\{i_1<i_2<\ldots\}$ is a cofinite subset of $\N$. We will
write $\Wedge_\infty E_\infty$ for this countable-dimensional vector
space. The action of the Lie algebra $\so(V_\infty)$ of
$\Spin(V_\infty)$ on this space is given via the explicit formulas from
\S\ref{ssec:ExplicitFormulas}. In particular, the span of the $e_I$
with $|\N \setminus I|$ even (respectively, odd) is a
$\Spin(V_\infty)$-submodule, and $\Wedge_\infty E_\infty$ is the direct sum of these (irreducible) modules.

\begin{remark}
    The reader may wonder why we do not introduce the direct spin representation as the direct limit of all $\Cl(V)f_1\cdots f_n$ along the maps $\tau_{f_n}$. This would make the ordinary Grassmann algebra $\Wedge E_\infty$ a model for the direct spin representation, instead of the slightly more complicated-looking space $\Wedge_\infty E_\infty$. However, the maps dual to the $\tau_{f_n}$ correspond to contraction maps with $f_n \in F$, which we have not discussed and which interchange even and odd half-spin representations. We believe that our theorem below goes through for this different setting, as well, but we have not checked the details.
\end{remark}

\subsection{Four infinite half-spin representations}\label{subsec:half-spin repr}

Keeping in mind that the maps $\psi_{e_n}$ interchange the even and odd subrepresentations, we define the \textit{direct (infinite) half-spin representations} $\Wedge_\infty^\pm E_\infty$ to be the direct limit
\begin{equation*}
    \Wedge_\infty^\pm E_\infty = \varinjlim \left(\Wedge^\pm E_0 \to \Wedge^\mp E_1 \to \Wedge^\pm E_2 \to \Wedge^\mp E_3 \to \Wedge^\pm E_4 \to \cdots \right)
\end{equation*}
along the maps $\psi_{e_n}$. 
For the sake of readability we will abbreviate this by 
\begin{equation}\label{eq:def of direct spin rep}
    \Wedge_\infty^\pm E_\infty = \varinjlim_n \Wedge^{\pm (-1)^n}E_n,
\end{equation}
where $\pm (-1)^n$ denotes $\pm$ if $n$ is even and $\mp$ if $n$ is odd. In terms of the basis $e_I$ introduced in \S\ref{sec:infinte half-spin representations}, the half-spin representation $\Wedge_\infty^+ E_\infty$ is spanned by all $e_I$ with $|\N \setminus I|$ even, and $\Wedge_\infty^- E_\infty$ by those with $|\N \setminus I|$ odd. The \textit{inverse (infinite) half-spin representations} are defined as the duals of the direct (infinite) half-spin representations. Using the isomorphisms from \Cref{rem:Flip in PsiDual} we observe
\begin{equation}\label{eq:inverse half-spin rep = proj limit}
    \left(\Wedge^\pm_\infty E_\infty \right)^*=\varprojlim_n \left( \Wedge^{\pm (-1)^n}E_n \right)^\ast \isom \varprojlim_n \Wedge^\pm E_n.
\end{equation}
So the inverse (infinite) half-spin representations can be identified with the inverse limits of the half-spin representations $\Wedge^\pm E_n$ along the projections $\pi_{e_n}$.

We can enrich the inverse spin representation to an affine scheme whose coordinate ring is the symmetric algebra on $\Wedge_\infty E_\infty$, recalling the following remark. 

\begin{remark}\label{rem:enriching dual spaces}
Let $K$ be any field (not necessarily algebraically closed) and $W$ any $K$-vector space (not necessarily finite dimensional). Then there are canonical identifications 
\[
    W^\ast = {\rm Spec}\big( {\rm Sym}(W)\big) (K) \subseteq 
    \left\{\text{closed points in }  {\rm Spec}\big( {\rm Sym}(W)\big)\right\}.
\]
So ${\rm Spec}\big( {\rm Sym}(W)\big)$ can be seen as an enrichment of $W^\ast$ to an affine scheme. 
If $W$ is a linear representation for a group $G$, then $G$ acts via $K$-algebra automorphisms on $\Sym W$ and hence via $K$-automorphisms on the affine scheme corresponding to $W^*$. For $W = \Wedge^\pm_\infty E_\infty$, this construction extends the natural $\Spin(V_\infty)$-action on the vector space $\varprojlim_n \Wedge^\pm E_n \isom W^*$ to the corresponding affine scheme.
\end{remark}

By abuse of notation, we will write $\left(\Wedge_\infty E_\infty\right)^*$ also for the scheme itself, and similarly for the inverse half-spin representations $\left(\Wedge^\pm_\infty E_\infty \right)^*$. Later we will also write $\Wedge^\pm E_n$ for the affine scheme ${\rm Spec}\left({\rm Sym}\left( \Wedge^{\pm (-1)^n}E_n \right) \right)$ by identifying $\Wedge^\pm E_n \cong \left(\Wedge^{\pm(-1)^n}E_n\right)^*$ as in \Cref{eq:inverse half-spin rep = proj limit}. \newpage

\section{Noetherianity of the inverse half-spin representations}
\label{s:noetherian}

In this section we prove our main theorem.

\begin{theorem} \label{thm:Main}
    The inverse half-spin representation $(\Wedge_\infty^+ E_\infty)^*$ is topologically Noetherian with respect to the action of $\Spin(V_\infty)$. That is, every descending chain 
    \[
        \left(\Wedge_\infty^+ E_\infty\right)^* \supseteq X_1 \supseteq X_2 \supseteq \ldots 
    \]
    of closed, reduced $\Spin(V_\infty)$-stable subschemes
    stabilises, and the same holds for the other inverse half-spin representation.
\end{theorem}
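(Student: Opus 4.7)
The plan is to reduce $\Spin(V_\infty)$-Noetherianity to the topological Noetherianity of algebraic representations of the infinite general linear group, as established in \cite{Draisma19, Eggermont22}. Since the subgroup $H \subseteq \Spin(V_\infty)$ from \S\ref{sec:infinte half-spin representations} double-covers $\GL(E_\infty)$, every $\Spin(V_\infty)$-stable closed subscheme is in particular $H$-stable; it therefore suffices to prove topological $H$-Noetherianity of $(\Wedge_\infty^+ E_\infty)^\ast$.

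The heart of the argument is to recognise the $H$-action on $(\Wedge_\infty^+ E_\infty)^\ast$ as a (determinant-twisted) algebraic $\GL(E_\infty)$-representation. By \eqref{re:Twist} in \S\ref{ssec:two-actions-gl}, at each finite level $n$ the restriction of the spin action on $\Wedge^\pm E_n$ to $\gl(E_n)$ equals the standard polynomial action on $\bigoplus_d \Wedge^d E_n$ shifted by $-\tfrac{1}{2}\tr$. The transition maps $\psi_{e_n}$, being wedging with the fixed vector $e_n$, are $\GL(E_{n-1})$-equivariant, so $\Wedge_\infty^+ E_\infty$ inherits a $\GL(E_\infty)$-module structure. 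Moreover each $\psi_{e_n}$ preserves the complement size $k = |\N \setminus I|$ of the basis index, yielding a $\GL(E_\infty)$-stable decomposition $\Wedge_\infty^+ E_\infty = \bigoplus_{k \text{ even}} W_k$, where $W_k$ has basis indexed by $\binom{\N}{k}$ via $I \leftrightarrow \N \setminus I$. Using the classical Hodge duality $\Wedge^{n-k} E_n \isom (\Wedge^k E_n)^\ast \otimes \det(E_n)$ at each finite level and passing to the direct limit, each $W_k$ matches (as an algebraic $\GL(E_\infty)$-representation, up to a twist by the determinant) with the dual of the polynomial representation $\Wedge^k E_\infty$. Dualising and invoking \Cref{rem:enriching dual spaces}, the scheme $(\Wedge_\infty^+ E_\infty)^\ast$ is identified with (a determinant twist of) $\Spec\Sym\bigoplus_{k \text{ even}} \Wedge^k E_\infty$, the spectrum of the polynomial functor $\bigoplus_{k \text{ even}} \Wedge^k$ evaluated at $E_\infty$.

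This last scheme is topologically $\GL(E_\infty)$-Noetherian by \cite{Draisma19, Eggermont22}. Because the determinant twist acts by a scalar on each summand $W_k$, and closed $H$-stable subschemes are automatically graded by $k$ (as witnessed by scalar-matrix subgroups at each finite level inside $H$), the twist preserves the collection of closed stable subschemes; this transfers Noetherianity to the twisted $H$-action and hence yields the theorem. The argument for $(\Wedge_\infty^- E_\infty)^\ast$ is identical after exchanging $+$ and $-$. The main obstacle is the identification in the previous paragraph: the transition maps $\psi_{e_n}$ mix polynomial degrees, so matching $\Wedge_\infty^+ E_\infty$ with a polynomial-functor representation requires careful bookkeeping of signs in the Hodge duality, of the passage between the direct limit $\psi_{e_n}$ and the inverse limit $\pi_{e_n}$ structures, and of the precise form of the determinant twist so that the resulting data fits the framework of \cite{Eggermont22}; once this is set up, invoking topological Noetherianity of polynomial functors is essentially off the shelf.
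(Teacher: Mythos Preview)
Your reduction to $H$-Noetherianity has a genuine gap: the scheme $(\Wedge_\infty^+ E_\infty)^*$ is \emph{not} topologically $H$-Noetherian, so the strategy of forgetting the full $\Spin(V_\infty)$-action cannot succeed. The decomposition $\Wedge_\infty^+ E_\infty = \bigoplus_{k \text{ even}} W_k$ that you write down is an \emph{infinite} direct sum of algebraic $\GL(E_\infty)$-representations of unbounded degree, and the Noetherianity theorems in \cite{Draisma19,Eggermont22} apply only to representations of bounded degree. Concretely, the closed subschemes $X_m := V\big(\bigoplus_{0 \leq k \leq 2m} W_k\big)$ are $H$-stable (each $W_k$ is a $\gl(E_\infty)$-subrepresentation) and form a strictly descending chain $X_0 \supsetneq X_1 \supsetneq \cdots$, so $H$-Noetherianity fails. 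These $X_m$ are of course not $\Spin(V_\infty)$-stable: the root vectors $f_i \wedge f_j \in \so(V_\infty) \setminus \gl(E_\infty)$ send $W_k$ to $W_{k+2}$, so acting on the ideal of $X_m$ produces generators in $W_{2m+2}$.

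This is precisely the mechanism the paper exploits. Rather than reducing wholesale to $H$, the paper runs an induction on the minimal degree $\delta_X$ of an equation of $X$: starting from such a $p$, one repeatedly applies $f_i \wedge f_j$ to produce an equation of the form $e_J \cdot q + r$ where $q = \partial p / \partial e_I$ involves only variables of bounded complement size. On the open locus $Z = X[1/q]$ one can then eliminate all variables $e_J$ with large $|J^c|$, embedding $Z$ into $\Spec(R^+_{\leq \ell})$ for a \emph{finite} $\ell$; only at this point does \cite{Eggermont22} apply (this is Proposition~\ref{prop:GLE}). The complement $X \cap V(\Spin(V_\infty)\cdot q)$ has strictly smaller $\delta$, closing the induction. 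In short, the passage from unbounded to bounded degree requires the extra root vectors in $\so(V_\infty)$, and your proposal omits exactly this step.
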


Recall that the action of $\Spin(V_\infty)$ on the inverse half-spin representation (as an affine scheme) is given by $K$-automorphisms, as described in \cref{rem:enriching dual spaces}.
We write $R$ for the symmetric algebra on the direct spin representation $\Wedge_\infty E_\infty$, so the inverse spin representation is $\Spec(R)$. Similarly, we write $R^\pm$ for the symmetric algebras on the direct half-spin representations, so $R^\pm$ is the coordinate ring of $\varprojlim_n \Wedge^\pm E_n$, respectively. 

Let us briefly outline the proof strategy. We will proceed by induction on the minimal degree of an equation defining a closed subset $X$. Starting with such an equation $p$, we show that there exists a partial derivative $q := \frac{\partial p}{\partial e_I}$ such that the principal open $X[1/q]$ is topologically $H_n$-Noetherian, where $H_n$ is the subgroup of $\Spin(V_\infty)$ defined below. For that we use that the $H_n$-action corresponds to a  ``twist'' of the usual $\GL(E_\infty)$-action, as observed in \cref{ssec:two-actions-gl} (for the exact formula see \eqref{re:Twist}); this allows us to apply the main result of \cite{Eggermont22}. Finally, for those points which are contained in the vanishing set of the $\Spin(V_\infty)$-orbit of $q$ we can apply induction, as the minimal degree of a defining equation has been lowered by $1$.

\subsection{Shifting}

Let $G_n$ be the subgroup of $G$ that fixes $e_1,\ldots,e_n,f_1,\ldots,f_n$ element-wise. Note that $G_n$ is isomorphic to $G$; at the level of the Lie algebras the isomorphism from $G$ to $G_n$ is given by the map
\[ 
    \begin{bmatrix}
        A & B \\
        C & -A^T
    \end{bmatrix} \mapsto 
    \begin{bmatrix}
        0 & 0 & 0 & 0 \\
        0 & A & 0 & B \\
        0 & 0 & 0 & 0 \\
        0 & C & 0 & -A^T
    \end{bmatrix}
\]
where the widths of the blocks are $n,\infty,n,\infty$, respectively. We write $H_n$ for $H \cap G_n$, where $H \subseteq \Spin(V_\infty)$ is the subgroup corresponding to the subalgebra $\gl(E_\infty) \subseteq \so(V_\infty)$. Then $H_n$ is the pre-image in $\Spin(V_\infty)$ of the subgroup $\GL(E_\infty)_n \subseteq \GL(E_\infty)$ of all $g$ that fix $e_1,\ldots,e_n$ element-wise and maps the span of the $e_i$ with $i>n$ into itself. The Lie algebra of $H_n$ and of $\GL(E_\infty)_n$ consists of the matrices above on the right with $B=C=0$.

\subsection{Acting with the general linear group on \texorpdfstring{$E$}{E}}

For every fixed $k \in \Z_{\geq 0}$, the Lie algebra $\gl(E_\infty) \subseteq \so(V_\infty)$ preserves the linear space
\[
    \left(\Wedge_\infty E_\infty\right)_k:=\big\langle \{e_I : |\N \setminus I|=k\} \big\rangle, 
\]
and hence so does the corresponding subgroup $H \subseteq
\Spin(V_\infty)$. We let $R_{\leq \ell} \subseteq R$ be the subalgebra generated by the spaces $(\Wedge_\infty E_\infty)_k$ with $k \leq \ell$. Crucial in the proof of Theorem~\ref{thm:Main} is the following result.

\begin{proposition} \label{prop:GLE}
    For every choice of nonnegative integers $\ell$ and $n$, $\Spec(R_{\leq \ell})$ is topologically $H_n$-Noetherian, i.e., every descending chain 
    \[ 
        \Spec(R_{\leq \ell}) \supseteq X_1 \supseteq X_2 \supseteq \ldots
    \]
    of $H_n$-stable closed and reduced subschemes stabilizes. 
\end{proposition}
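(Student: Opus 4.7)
The plan is to apply the topological Noetherianity theorem for algebraic representations of the infinite general linear group from \cite{Eggermont22}.

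The key step is to identify each graded piece $(\Wedge_\infty E_\infty)_k$ as an algebraic $\GL(E_\infty)_n$-representation in the sense of \cite{Eggermont22}, at least after accounting for the character twist inherent in the spin construction. The basis $\{e_I : |\N\setminus I|=k\}$ is indexed by the $k$-element complements $\N\setminus I\subseteq\N$; since any $g\in\GL(E_\infty)_n$ fixes $e_i$ for all but finitely many $i$, the standard action of $g$ on the semi-infinite wedge $\Wedge_\infty E_\infty$ is well-defined and preserves the $k$-grading. That this action fits into Eggermont's algebraic-representation framework is the direct analogue of the situation for the dual infinite wedge under $\GL_\infty$ handled by \cite{Nekrasov20}. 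Consequently $W:=\bigoplus_{k\le\ell}(\Wedge_\infty E_\infty)_k$ is algebraic, so is $R_{\le\ell}=\Sym W$, and the main theorem of \cite{Eggermont22} delivers topological $\GL(E_\infty)_n$-Noetherianity of $\Spec R_{\le\ell}$.

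To convert this into $H_n$-Noetherianity, note that by \eqref{re:Twist} and the description at the end of \S\ref{ssec:two-actions-gl}, the spin action of $(g,t)\in H_n$ on $W$ equals the standard $\GL(E_\infty)_n$-action $\tilde\rho(g)$ times the character $(g,t)\mapsto t^{-1}$; thus on a polynomial-degree-$d$ element of $R_{\le\ell}$ it acts as $t^{-d}\tilde\rho(g)$. A short argument using the two preimages $(g,\pm t_0)\in H_n$ of any $g\in\GL(E_\infty)_n$ (with $t_0^2=\det g$, which ranges over all of $K^*$), together with the fact that the central element $(1,-1)$ already imposes $\Z/2\Z$-gradedness on any $H_n$-stable ideal, shows that such ideals are in fact $\N$-graded and $\GL(E_\infty)_n$-stable under the untwisted action. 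Hence every descending $H_n$-stable chain in $\Spec R_{\le\ell}$ is a descending $\GL(E_\infty)_n$-stable chain, which stabilises by the previous paragraph.

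The main obstacle is the identification of $(\Wedge_\infty E_\infty)_k$ as an algebraic $\GL(E_\infty)$-representation fitting Eggermont's framework: the semi-infinite-wedge combinatorics (parametrised by cofinite subsets of $\N$) must be matched carefully with the polynomial functors underlying his definition, and the determinantal twist inherent to the spin construction must be tracked throughout so that no stable closed subschemes are lost when passing between $H_n$ and $\GL(E_\infty)_n$.
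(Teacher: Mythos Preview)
Your overall strategy matches the paper's: reduce $H_n$-stability to $\GL(E_\infty)_n$-stability via the twist formula \eqref{re:Twist}, then invoke \cite{Eggermont22}. Your argument for the reduction (second paragraph) is a group-theoretic variant of the paper's Lemma~\ref{lm:GLE}, which instead uses the Lie algebra action of a single diagonal element to establish homogeneity; both work.

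The genuine gap is in your first paragraph. You assert that the standard $\tilde\rho$-action of $\GL(E_\infty)_n$ on $(\Wedge_\infty E_\infty)_k$ is an algebraic representation in the sense of \cite{Eggermont22}, but this is not true as stated: already for $k=0$ the space is one-dimensional and $g\in\GL(E_m)$ acts on $e_\N$ by $\det(g)$, and more generally the restriction to $\GL(E_m)$ on the relevant span is $\Wedge^{m-k}E_m\cong\det(E_m)\otimes\Wedge^k E_m^*$. The determinant character does not sit inside the mixed-tensor framework of \cite{Eggermont22}, so you cannot apply their theorem directly to $R_{\le\ell}$. Citing \cite{Nekrasov20} by analogy does not resolve this.

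The paper closes this gap with two extra lemmas you are missing. Lemma~\ref{lm:Duality} constructs an explicit pairing $\gamma(\omega,\omega')=$ coefficient of $e_\N$ in $\omega\wedge\omega'$, yielding an isomorphism $(\Wedge_\infty E_\infty)_k\to\Wedge^k(E_\infty)_*$, $e_I\mapsto\pm\varepsilon^{I^c}$, that is only $\SL(E_\infty)$-equivariant (precisely because the determinant twist is absorbed). The target $\bigoplus_{k\le\ell}\Wedge^k(E_\infty)_*$ \emph{is} algebraic, so \cite{Eggermont22} applies there. Lemma~\ref{lem:SL-GL-invariant} then shows that on the target side $\SL(E_\infty)_n$-stable ideals are automatically $\GL(E_\infty)_n$-stable (by adjusting the determinant on an unused basis vector $e_{m+1}$), which lets one transport the Noetherianity conclusion back across the merely $\SL$-equivariant isomorphism. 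Your final paragraph correctly identifies this as the main obstacle, but you need these two concrete steps rather than an appeal to analogy.
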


The key ingredient in the proof of \Cref{prop:GLE} is the main result of \cite{Eggermont22}. In order to apply their result we need to do some preparatory work. We will start with the following lemma.

\begin{lemma} \label{lm:GLE}
    Every $H_n$-stable closed subscheme of $\Spec(R_{\leq \ell})$ is also stable under the group $\GL(E_\infty)_n$ acting in the natural manner on $\Wedge_\infty E_\infty$ and its dual, and \textit{vice versa}.
\end{lemma}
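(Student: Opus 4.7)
The plan is to show that both $H_n$-stability and $\GL(E_\infty)_n$-stability of a closed subscheme of $\Spec(R_{\leq \ell})$ amount to the same condition: the defining ideal $I$ is graded, and each graded piece $I \cap R_d$ is preserved by the natural action of $\GL(E_\infty)_n$. The key input is formula \eqref{re:Twist}, which at the group level says that on the degree-$d$ component $R_d \subseteq R_{\leq \ell}$, the spin action of $(g, t) \in H_n$ is exactly $t^{-d}$ times the natural action of $g \in \GL(E_\infty)_n$; in particular both actions preserve the grading of $R_{\leq \ell}$.

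The main technical step will be to prove that every $H_n$-stable (respectively $\GL(E_\infty)_n$-stable) ideal $I \subseteq R_{\leq \ell}$ is graded. Given $p \in I$, I decompose $p = \sum_d p_d$ with $p_d \in R_d$. Since $p$ is a polynomial in only finitely many generators $e_J$ of $R_{\leq \ell}$, there is a finite set $S \subseteq \N$ such that $\N \setminus J \subseteq S$ for every $e_J$ appearing in $p$. Pick any $m > \max(S \cup \{n\})$ and let $g^{(m)}_\mu \in \GL(E_\infty)_n$ be the diagonal element that scales $e_m$ by $\mu \in K^\ast$ and fixes every other $e_i$. By the choice of $m$ one has $m \in J$ for every relevant $e_J$, so the natural action of $g^{(m)}_\mu$ multiplies every degree-$d$ monomial in $p$ by $\mu^d$, and the spin action of any lift $(g^{(m)}_\mu, t) \in H_n$ (with $t^2 = \mu$) multiplies it by $t^d$. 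Consequently, the element $\sum_d \mu^d p_d$ (respectively $\sum_d t^d p_d$) lies in $I$ for all $\mu \in K^\ast$ (respectively $t \in K^\ast$). Taking $K$-linear combinations of finitely many such elements and using the invertibility of a Vandermonde matrix, I extract each homogeneous component $p_d$, showing $p_d \in I$. Hence $I$ is graded.

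Once $I$ is known to be graded, the equivalence of the two stabilities is immediate from \eqref{re:Twist}: on each $R_d$ the spin action of $(g, t)$ differs from the natural action of $g$ only by the nonzero scalar $t^{-d}$, which preserves the $K$-subspace $I \cap R_d$ automatically. Combined with the surjectivity of $H_n \to \GL(E_\infty)_n$, this gives that $g \cdot (I \cap R_d) \subseteq I \cap R_d$ for all $g \in \GL(E_\infty)_n$ if and only if $(g, t) \cdot (I \cap R_d) \subseteq I \cap R_d$ for all $(g, t) \in H_n$. I expect the step of showing that stable ideals are graded to be the main point; the element $g^{(m)}_\mu$ supported on an index $m$ outside the support of $p$ is the crucial gadget that lets one read off the homogeneous components.
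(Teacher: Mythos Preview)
Your proposal is correct and follows essentially the same approach as the paper. The only cosmetic difference is that the paper works at the Lie algebra level---using the diagonal element $A=E_{mm}\in\gl(E_\infty)_n$, whose spin action on degree-$d$ monomials is multiplication by $d/2$, to decompose $I$ into graded pieces---whereas you work at the group level with the one-parameter subgroup $g^{(m)}_\mu$ and a Vandermonde argument; both hinge on choosing an index $m$ outside the support of $p$ and then invoking the twist formula~\eqref{re:Twist} to compare the two actions on each graded piece.
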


\begin{proof}
    \Cref{re:Twist} implies that $\gl(E_\infty) \subseteq \so(V_\infty)$ acts on $\Wedge_\infty E_\infty$ via
    \[ 
        \rho(A)=\tilde{\rho}(A)-\frac{1}{2} \tr(A) \id_{\Wedge_\infty E_\infty} 
    \]
    where $\tilde{\rho}$ is the standard representation of $\gl(E_\infty)$ on $\Wedge_\infty E_\infty$. An $H_n$-stable closed subscheme $X$ of $\Spec(R_{\leq \ell})$ is given by an $H_n$-stable ideal $I$ in the symmetric algebra $R_{\leq \ell}$. Such an $I$ is then also stable under the action of the Lie algebra $\gl(E_\infty)_n$ of $H_n$ by derivations that act on variables in $\bigoplus_{k=0}^\ell (\Wedge_\infty E_\infty)_{k}$ via $\rho$. 
    
    We claim that $I$ is a homogeneous ideal. Indeed, for $f \in I$,
    choose $m>n$ such that all variables in $f$ (which are basis
    elements $e_I$) contain the basis element $e_m$ of $E_\infty$. Let
    $A \in \gl(E_\infty)_n$ be the diagonal matrix with $0$'s
    everywhere except a $1$ on position $(m,m)$. Then $\rho(A)$ maps
    each variable in $f$ to $\frac{1}{2}$ times itself. Hence, by the
    Leibniz rule, $\rho(A)$ scales 
    the homogeneous part of degree $d$ in $f$ by $\frac{d}{2}$.
    Since $I$ is preserved by $\rho(A)$, it follows that $I$ contains all homogeneous components of $f$, and hence $I$ is a homogeneous ideal. 
    
    Now let $B \in \gl(E_\infty)_n$ and $f \in I$ be arbitrary. By the
    previous paragraph we can assume $f$ to be homogeneous of degree
    $d$, and we then have
    \[
        \rho(B)f = \tilde{\rho}(B)f - \frac{d}{2}\tr(B)f,
    \]
    and since $I$ is $\rho(B)$-stable, we deduce $\tilde{\rho}(B) f
    \in I$. This completes the proof in one direction. The proof in the 
    opposite direction is identical.
\end{proof}

\begin{remark} \label{re:Cone}
By the proof above,
    any $\Spin(V_\infty)$-stable closed subscheme $X$ of $(\Wedge_\infty E_\infty)^*$ is an affine cone.
\end{remark}

Following \cite{Eggermont22} the \textit{restricted dual} $(E_\infty)_*$ of $E_\infty$ is defined as the union $\bigcup_{n\geq1}(E_n)^*.$ We will denote by $\varepsilon^1, \varepsilon^2, \dots$ the basis of $(E_\infty)_*$ that is dual to the canonical basis $e_1, e_2, \dots$ of $E_\infty$ given by $\varepsilon^i(e_j)=\delta_{ij}$.

\begin{lemma} \label{lm:Duality}
    
    There is an $\SL(E_\infty)$-equivariant isomorphism
    \[
        \bigwedge \nolimits_\infty E_\infty \longrightarrow \bigwedge (E_\infty)_*,
    \]
    which restricts to an isomorphism
    \[
        \left( \bigwedge \nolimits_\infty E_\infty\right)_k \longrightarrow \bigwedge \nolimits^k(E_\infty)_*.
    \]
\end{lemma}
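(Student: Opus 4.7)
The plan is to build the desired isomorphism from classical finite-dimensional Hodge-type dualities and then pass to direct limits. For each $N\geq k\geq 0$, the pairing
\[
\Wedge^{N-k}E_N\otimes\Wedge^k E_N\longrightarrow\Wedge^N E_N\isom K,\qquad \alpha\otimes\beta\mapsto [\alpha\wedge\beta\,:\,e_1\wedge\cdots\wedge e_N],
\]
(with the last isomorphism sending $e_1\wedge\cdots\wedge e_N\mapsto 1$) is nondegenerate and $\SL(E_N)$-invariant, since the volume form $e_1\wedge\cdots\wedge e_N$ is $\SL(E_N)$-fixed. It therefore induces an $\SL(E_N)$-equivariant isomorphism $\phi_N\colon\Wedge^{N-k}E_N\to\Wedge^k(E_N)^*$ that sends a basis element $e_I$, for $I\subseteq[N]$ of size $N-k$, to $\sigma_I\,\varepsilon^{[N]\setminus I}$, where $\sigma_I\in\{\pm 1\}$ is determined by $e_I\wedge e_{[N]\setminus I}=\sigma_I\, e_1\wedge\cdots\wedge e_N$.

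Next I would verify that these finite-dimensional isomorphisms are compatible with the direct-limit structures on both sides. On the domain side, $(\Wedge_\infty E_\infty)_k$ is the direct limit of the $\Wedge^{N-k}E_N$ along the transition maps $\psi_{e_{N+1}}\colon\omega\mapsto\omega\wedge e_{N+1}$. On the codomain side, $\Wedge^k(E_\infty)_*=\bigcup_N\Wedge^k(E_N)^*$ along the inclusions $\iota$ dual to the projections $E_{N+1}\to E_N$ that annihilate $e_{N+1}$. A direct computation, using $e_{N+1}\wedge\beta=(-1)^k\beta\wedge e_{N+1}$ for $\beta\in\Wedge^k E_N$ together with $e_1\wedge\cdots\wedge e_{N+1}=(e_1\wedge\cdots\wedge e_N)\wedge e_{N+1}$, yields
\[
\phi_{N+1}\circ\psi_{e_{N+1}}=(-1)^k\,\iota\circ\phi_N.
\]
Absorbing the parity by setting $\tilde{\phi}_N:=(-1)^{kN}\phi_N$, which is still an $\SL(E_N)$-equivariant isomorphism, produces a directed system. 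Taking the direct limit gives an $\SL(E_\infty)$-equivariant isomorphism $(\Wedge_\infty E_\infty)_k\isom\Wedge^k(E_\infty)_*$, and summing over $k\geq 0$ yields the global isomorphism claimed.

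The conceptual reason the duality is $\SL$- rather than $\GL$-equivariant is that the pairing depends on the chosen trivialization of $\det E_N$, which is fixed by $\SL$ but scaled by $\GL$. The only technical nuisance is the sign bookkeeping needed to match the two directed systems; once the twist by $(-1)^{kN}$ is identified, the compatibility check is routine and the argument becomes formal.
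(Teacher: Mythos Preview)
Your proof is correct and uses essentially the same idea as the paper: both constructions come from the wedge pairing $\alpha\otimes\beta\mapsto[\alpha\wedge\beta:\text{top form}]$. The only difference is in execution---the paper works directly at the infinite level, defining $\gamma(\omega,\omega')$ as the coefficient of $e_{\N}$ in $\omega\wedge\omega'$ for $\omega\in\Wedge E_\infty$ and $\omega'\in\Wedge_\infty E_\infty$, which yields $e_I\mapsto\pm\varepsilon^{I^c}$ in one stroke and sidesteps the finite-level compatibility and sign bookkeeping you carry out.
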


We will use this isomorphism to regard $\Wedge_\infty E_\infty$ as the
restricted dual of the Grassmann algebra $\Wedge E_\infty$. We stress,
though, that this isomorphism is not $GL(E_\infty)$-equivariant. 

\begin{proof}
    We have a natural bilinear map
    \[ 
        \Wedge E_\infty \times \Wedge_\infty E_\infty \to \Wedge_\infty E_\infty, \quad  (\omega,\omega') \mapsto \omega \wedge \omega'. 
    \]
    If $I \subseteq \N$ is finite and $J \subseteq \N$ is cofinite, then $e_I \wedge e_J$ is $0$ if $I \cap J \neq \emptyset$ and $\pm e_{I \cup J}$ otherwise, where the sign is determined by the permutation required to order the sequence $I,J$. We then define a perfect pairing $\gamma$ between the two spaces by
    \[
        \gamma(\omega,\omega'):=\text{the coefficient of $e_\N$ in } \omega \wedge \omega'.
    \]
   The map $\Phi_\gamma: \Wedge_\infty E_\infty \to \Wedge (E_\infty)_*, \; \omega' \mapsto \gamma(\cdot, \omega')$ induced by $\gamma$ is the isomorphism given by $e_I \mapsto \pm \varepsilon^{I^c}$, where $I^c \subseteq \N$ is the complement of $I$ and $\varepsilon^J \coloneqq \varepsilon^{j_1}\wedge \cdots \wedge \varepsilon^{j_k}$ for a finite set $J=\{j_1, \dots, j_k\}$. Note that $\gamma(A\cdot\omega, A\cdot \omega') = \det(A)\gamma(\omega, \omega')$ for all $A \in \GL(E_\infty)$, and hence $\gamma$ is $\SL(E_\infty)$-invariant. Therefore, the isomorphism $\Phi_\gamma$ is $\SL(E_\infty)$-equivariant. 
\end{proof}
   
\begin{lemma} \label{lem:SL-GL-invariant}
    An ideal $I \subseteq \Sym(\bigwedge(E_\infty)_*)$ is $\SL(E_\infty)$-stable if and only if it is $\GL(E_\infty)$-stable. The same holds for $\SL(E_\infty)_n$ and  $\GL(E_\infty)_n$.
\end{lemma}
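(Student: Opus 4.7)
The easy direction ($\GL$-stable implies $\SL$-stable) is immediate since $\SL(E_\infty) \subseteq \GL(E_\infty)$. For the converse, the plan is to first extract from $\SL$-stability a strong multigrading on $I$, from which stability under a Cartan torus $T \subseteq \GL(E_\infty)$ follows for free; a factorisation $\GL(E_\infty) = \SL(E_\infty) \cdot T$ then yields $\GL$-stability.

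Equip $R := \Sym(\Wedge(E_\infty)_*)$ with the multigrading indexed by sequences $\mathbf{d} = (d_1, d_2, \ldots)$ of nonnegative integers of finite support: the generator $\varepsilon^J$ has multidegree $\chi_J$ (the indicator function of $J$), and this extends multiplicatively so that the monomial $\varepsilon^{J_1} \cdots \varepsilon^{J_k}$ has multidegree $\mathbf{d}$ with $d_m = \#\{i \mid m \in J_i\}$. For distinct $a,b \in \N$ let $h_{a,b}(t) \in \SL(E_\infty)$ be the diagonal element sending $e_a \mapsto t e_a$, $e_b \mapsto t^{-1} e_b$, and fixing every other basis vector; on the piece $R_{\mathbf{d}}$ it acts as the scalar $t^{d_b - d_a}$. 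For an $\SL$-stable ideal $I$ and $f = \sum_{\mathbf{d}} f_{\mathbf{d}} \in I$ (a finite decomposition), $\SL$-stability combined with a Vandermonde argument applied to $h_{a,b}(t) \cdot f = \sum_{\mathbf{d}} t^{d_b - d_a} f_{\mathbf{d}}$ gives $\sum_{\mathbf{d} \,:\, d_b - d_a = k} f_{\mathbf{d}} \in I$ for every $k \in \Z$. Intersecting over all pairs $(a,b)$: two multidegrees of finite support that agree in every difference $d_b - d_a$ differ by a constant sequence, but the only constant sequence of finite support is zero, so each component $f_{\mathbf{d}}$ must lie in $I$, and $I$ is multigraded.

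Let $T \subseteq \GL(E_\infty)$ be the subgroup of diagonal matrices with only finitely many entries $\neq 1$. Such a $t = \diag(t_1, t_2, \ldots)$ acts on $R_{\mathbf{d}}$ by the scalar $\prod_m t_m^{-d_m}$, so the multigrading just obtained implies $T$-stability of $I$. For any $g \in \GL(E_\infty)$, choose $n$ with $g \in \GL(E_n)$ and put $t := \diag(\det(g), 1, 1, \ldots) \in T$; then $s := g t^{-1}$ has determinant $1$ on $E_n$ and hence belongs to $\SL(E_n) \subseteq \SL(E_\infty)$. Combined with $\SL$-stability, the factorisation $g = st$ yields $\GL$-stability. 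The case $\SL(E_\infty)_n$ versus $\GL(E_\infty)_n$ is structurally identical: both groups fix $e_1, \ldots, e_n$ so the gradings by $d_1, \ldots, d_n$ are automatically preserved; the elements $h_{a,b}(t)$ with $a, b > n$ all lie in $\SL(E_\infty)_n$ and produce the full multigrading in the remaining coordinates via the same finite-support argument; and the analogous factorisation $\GL(E_\infty)_n = \SL(E_\infty)_n \cdot (T \cap \GL(E_\infty)_n)$ closes the argument.

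The only delicate step is the use of the finite-support condition to promote ``$\mathbf{d}$ modulo constants'' to $\mathbf{d}$ itself. This is where infinite dimensionality is essential: in fixed finite dimension the nonzero constant sequences survive, so $\SL$-stability only recovers the multigrading modulo the determinant character, and the conclusion genuinely fails.
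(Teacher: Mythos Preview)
Your proof is correct, but it takes a longer route than the paper's. Both proofs ultimately rest on the same observation---that in infinite dimensions one can always adjust the determinant of $A \in \GL(E_\infty)$ by a diagonal factor to land in $\SL(E_\infty)$---but you and the paper place that diagonal correction differently, and this makes all the difference in length.

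You put the correction on coordinate $1$ (setting $t=\diag(\det(g),1,1,\ldots)$), which may well appear in $f$. This forces you to first prove that $I$ is $T$-stable, which in turn requires the full multigrading argument via the $h_{a,b}(t)$ and Vandermonde. The paper instead chooses $m$ so large that $f \in \Sym(\Wedge (E_m)^*)$ and $A \in \GL(E_m)$, and then places the determinant correction on coordinate $m+1$: define $A_{m+1}$ to agree with $A$ on $E_m$ and send $e_{m+1} \mapsto \det(A)^{-1} e_{m+1}$. Since $\varepsilon^{m+1}$ does not occur in $f$, the resulting element $A' \in \SL(E_\infty)$ satisfies $A'\cdot f = A\cdot f$ on the nose, and one concludes immediately. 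No multigrading, no torus, no Vandermonde.

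So the paper's argument is strictly shorter; your approach has the minor advantage of establishing along the way the structural fact that any $\SL(E_\infty)$-stable ideal is multigraded for the full weight lattice, and your closing remark about why the finite-dimensional analogue fails is a nice observation that the paper does not make explicit. For the $\SL(E_\infty)_n$ versus $\GL(E_\infty)_n$ case both arguments adapt without difficulty; in the paper's version one simply takes the correction coordinate $m+1 > n$.
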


\begin{proof}
    Assume that $I$ is $\SL(E_\infty)$-stable. Let $f \in I$ and $A \in \GL(E_\infty)$ be arbitrary. Choose $m=m(f,A) \in \N$ large enough so that $f \in \Sym(\Wedge(E_m)^*)$ and $A$ is the image of some $A_m \in \GL(E_m)$. Define $A_{m+1} \in \GL(E_{m+1})$ as the map given by $A_{m+1}(e_i)=A_m(e_i)$ for $i \leq m$ and $A_{m+1}(e_{m+1})=(\det(A_m))^{-1}( e_{m+1})$, and let $A^\prime$ be the image of $A_{m+1}$ in $\SL(E_\infty)$. Then the action of $A_m$ and $A_{m+1}$ agree on $(E_m)^*$. Hence they also agree on $\Sym(\Wedge(E_m)^*)$. So $A \cdot f = A^\prime \cdot f \in I$ since $I$ was assumed to be $\SL(E_\infty)$-stable and $A^\prime \in \SL(E_\infty)$. As $f \in I$ and $A \in \GL(E_\infty)$ were arbitrary, this shows that $I$ is $GL(E_\infty)$-stable.
\end{proof}

\begin{proof}[Proof of Proposition~\ref{prop:GLE}.]
    First, we claim that $\Spec\big(\Sym\big(\bigoplus_{k=0}^\ell \Wedge^k (E_\infty)_*\big)\big)$ is topologically $\GL(E_\infty)_n$-Noetherian. Indeed, the standard $\GL(E_\infty)$-representation of the space $\bigoplus_{k=0}^\ell \Wedge^k (E_\infty)_*$ is an algebraic representation and this also remains true when we act with $\GL(E_\infty)$ via its isomorphism into $\GL(E_\infty)_n$. Hence, the claim follows from \cite[Theorem 2]{Eggermont22}.
    Let $(X_i)_{i\in\N} \subseteq \Spec(R_{\leq \ell})$ be a descending chain of $H_n$-stable, closed, reduced subschemes. By \Cref{lm:GLE} every $X_i$ is also $\GL(E_\infty)_n$-stable. By \Cref{lm:Duality} there is an $\SL(E_\infty)_n$-equivariant isomorphism $\Spec(R_{\leq \ell}) \cong \Spec\big(\Sym\big(\bigoplus_{k=0}^\ell \Wedge^k (E_\infty)_*\big)\big)$. Let $X_i^\prime \subseteq \Spec\big(\Sym\big(\bigoplus_{k=0}^\ell \Wedge^k (E_\infty)_*\big)\big)$ be the closed, reduced, $\SL(E_\infty)$-stable subscheme corresponding to $X_i$ under this isomorphism. Using \Cref{lem:SL-GL-invariant} we see that the subschemes $X_i^\prime$ are also $\GL(E_\infty)_n$-stable. Therefore, the chain $(X_i^\prime)_{i\in \N}$ stabilizes by our first claim. Consequently, also the chain $(X_i)_{i\in\N}$ stabilizes. 
\end{proof}

Before we come to the proof of \cref{thm:Main}, let us recall the action of $f_i \wedge f_j \in \so(V_\infty)$ on $\Wedge^+_\infty E_\infty$ and its symmetric algebra $R^+$ in explicit terms. Recall from \cref{sec:infinte half-spin representations} that a basis for $\Wedge^+_\infty E_\infty$ is given by $e_I = e_{i_1} \wedge e_{i_2} \wedge \cdots$, where $I = \{i_1 < i_2 < \cdots\} \subseteq \N$ is cofinite and $|\N \setminus I|$ even. Then we have
\[
    (f_i \wedge f_j) e_I = \begin{cases}
        (-1)^{c_{i,j}(I)}e_{I\setminus \{i, j\}}& \text{if }
	i,j \in I, \text{ and}\\
        0 & \text{otherwise,}
    \end{cases}
\]
where $c_{i,j}(I)$ depends on the position of $i,j$ in $I$. (Note
that there is no factor $4$, since in our identification of $\Wedge^2
V$ to the Lie subalgebra $L$ of $\Cl(V)$ we had a factor
$\frac{1}{4}$.) The corresponding action of $f_i \wedge f_j$ on
polynomials in $R^+$ is as a derivation. 

\subsection{Proof of \texorpdfstring{\cref{thm:Main}}{Theorem~\ref{thm:Main}}}

Let $R^+ \subseteq R$ be the symmetric algebra on the direct half-spin representation $\Wedge^+_\infty E_\infty$, so that $\Spec(R^+)$ is the inverse half-spin representation $(\Wedge^+_\infty E_\infty)^*$. We prove topological $\Spin(V_\infty)$-Noetherianity of $\Spec(R^+)$; the corresponding statement for $\Spec(R^-)$ is proved in exactly the same manner. 

For a closed, reduced $\Spin(V_\infty)$-stable subscheme $X$ of
$\Spec(R^+)$, we denote by $\delta_X \in \{0,1,2,\ldots,\infty\}$ the
lowest degree of a nonzero polynomial in the ideal $I(X) \subseteq
R^+$ of $X$. Here we consider the natural grading on $R^+ = \Sym(\Wedge^+_\infty E_\infty)$, where the elements of $\Wedge^+_\infty E_\infty$ all have degree $1$.

We proceed by induction on $\delta_X$ to show that $X$ is topologically Noetherian; we may therefore assume that this is true for all $Y$
with $\delta_Y<\delta_X$. We have $\delta_X=\infty$ if and only if $X=\Spec(R^+)$. Then a chain
\[
    \Spec(R^+) = X \supseteq X_1 \supseteq X_2 \supseteq \ldots
\]
of $\Spin(V_\infty)$-closed subsets is either constant or else
there exists an $i$ with $\delta_{X_i}<\infty$. Hence it suffices
to prove that $X$ is Noetherian under the additional assumption
that $\delta_X<\infty$. At the other extreme, if $\delta_X=0$,
then $X$ is empty and there is nothing to prove. So we assume that $0<\delta_X<\infty$ and that all $Y$ with $\delta_Y<\delta_X$ are $\Spin(V_\infty)$-Noetherian.

Let $p \in R^+$ be a nonzero polynomial in the ideal of $X$ of degree $\delta_X$. By Remark~\ref{re:Cone}, since $X$ is a cone, $p$ is in fact homogeneous of degree $\delta_X$. 
Let $e_I$ be a variable appearing in $p$ such that $k := |I^c|$ is
maximal among all variables in $p$; note that $k$ is even. 
Then choose $n \geq k+2$ even such that all variables of $p$ are contained in
$\Wedge^+ E_n$, i.e., they are of the form $e_J$ with $J \supseteq
\{n+1,n+2,\ldots\}$.

Now act on $p$ with the element $f_{i_1} \wedge f_{i_2} \in
\so(V_\infty)$ with $i_1<i_2$ the two smallest elements in $I$. Since $X$ is $\Spin(V_\infty)$-stable, the result $p_1$ is again in the ideal of $X$. Furthermore, $p_1$ has the form
\[ 
    p_1=\pm e_{I \setminus \{i_1,i_2\}} \cdot q + r_1 
\]
where $q=\frac{\partial p}{\partial e_{I}}$ contains only variables $e_J$ with $|J^c| \leq k$ and where $r_1$ does not contain $e_{I \setminus \{i_1,i_2\}}$ but may contain other variables $e_J$ with $|J^c|=k+2$ (namely, those with $i_1,i_2 \not \in J$ for which $e_{J \cup \{i_1,i_2\}}$ appears in $p$).

If $n=k+2$, then $I \setminus \{i_1,i_2\}=\{n+1,n+2,\ldots\}$ and, since all variables $e_J$ in $p_1$ satisfy $J \supseteq  \{n+1,n+2,\ldots \}$, $e_{I \setminus \{i_1,i_2\}}$ is the only variable $e_J$ in $p_1$ with $|J^c|=k+2$. If $n>k+2$, then we continue in the same manner, now acting with $f_{i_3} \wedge f_{i_4}$ on $p_1$, where $i_3<i_4$ are the two smallest elements in $I \setminus \{i_1,i_2\}$. We write $p_2$ for the result, which is now of the form 
\[
    p_2=\pm e_{I \setminus \{i_1,i_2,i_3,i_4\}} \cdot q + r_2 
\]
where $q$ is the same polynomial as before and $r_2$ does not
contain the variable $e_{I \setminus \{i_1,i_2,i_3,i_4\}}$ but may contain other variables $e_J$ with $|J^c|=k+4$. 

Iterating this construction we find the polynomial
\[
    p_\ell=\pm e_{\{n+1,n+2,\ldots\}} \cdot q + r_\ell 
\]
in the ideal of $X$, where $\ell=(n-k)/2$, $q$ is the same polynomial as before and $r_\ell$ only contains variables $e_J$ with $|J^c|<n$. Let $Z:=X[1/q]$ be the open subset of $X$ where $q$ is nonzero. 

\begin{lemma} \label{lm:Z}
    For every variable $e_J$ with $|J^c| \geq n$, the ideal of $Z$ in the localisation $R^+[1/q]$ contains a polynomial of the form $e_J - s/q^d$ for some $d \in \Z_{\geq 0}$ and some $s \in R^+_{\leq n-2}$.
\end{lemma}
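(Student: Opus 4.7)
The plan is to prove the claim by induction on $|J^c|$, which ranges over the even integers $\geq n$. I work with the equivalent formulation: for every $J$ with $|J^c| \geq n$ there exist $d \in \Z_{\geq 0}$ and $s \in R^+_{\leq n-2}$ with $q^d e_J - s \in I(X)$; the two formulations are related by clearing denominators, using that $q \in R^+_{\leq n-2}$.

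The base case $J_0 := \{n+1, n+2, \ldots\}$ is immediate from $p_\ell = \pm e_{J_0} q + r_\ell \in I(X)$, since $r_\ell \in R^+_{\leq n-2}$ (only variables $e_K$ with $|K^c| < n$ appear in $r_\ell$, and by parity $|K^c| \leq n - 2$). To extend to all $J$ with $|J^c| = n$, I argue by iterated swaps. Given an already-established polynomial $q^d e_L - s_L \in I(X)$ with $|L^c| = n$ and $s_L \in R^+_{\leq n-2}$, together with indices $i \in L^c$ and $j \notin L^c$, I apply the Lie algebra derivation $e_i \wedge f_j \in \gl(E_\infty) \subseteq \so(V_\infty)$. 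Because $e_i \wedge f_j$ preserves the missing-set size of every variable on which it acts non-trivially, both $(e_i \wedge f_j) \cdot q$ and $(e_i \wedge f_j) \cdot s_L$ stay in $R^+_{\leq n-2}$. Expanding via the Leibniz rule, using $(e_i \wedge f_j) \cdot e_L = \pm e_{L'}$ with $L'^c = (L^c \setminus \{i\}) \cup \{j\}$, substituting $e_L \equiv s_L/q^d$ modulo $I(Z)$, and clearing denominators produces an analogous polynomial for $e_{L'}$. Since such swaps act transitively on the $n$-subsets of $\N$, this covers every $J$ with $|J^c| = n$.

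For the inductive step, assume the claim for all $e_L$ with $n \leq |L^c| \leq m$, and let $|J^c| = m + 2$. Since $|J^c| > n$, there are at least two elements of $J^c$ lying in $\{n+1, n+2, \ldots\}$; fix such a pair $i, j$ and set $K := J \cup \{i, j\}$, so $|K^c| = m$. By hypothesis there is $q^{d_K} e_K - s_K \in I(X)$ with $s_K \in R^+_{\leq n-2}$. Applying the derivation $f_i \wedge f_j$ and using $(f_i \wedge f_j) \cdot e_K = \pm e_J$ yields the polynomial
\[
\pm q^{d_K} e_J + d_K q^{d_K-1} \bigl((f_i \wedge f_j) \cdot q\bigr) \cdot e_K - (f_i \wedge f_j) \cdot s_K \in I(X).
\]
The main obstacle is that $f_i \wedge f_j$ increases the missing-set size of each non-annihilated variable by $2$. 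The choice $i, j > n$ is essential: every variable in $q$ has index-set containing $\{n+1, n+2, \ldots\} \ni i, j$, so $(f_i \wedge f_j) \cdot q$ has variables only of missing-set size $\leq k + 2 \leq n$, and similarly $(f_i \wedge f_j) \cdot s_K$ lies in $R^+_{\leq n}$. Newly appearing variables of size exactly $n$ are covered by the induction hypothesis (supplied by the previous step), while smaller sizes lie in $R^+_{\leq n-2}$ already. Substituting these expressions, together with $e_K \equiv s_K/q^{d_K}$, and clearing denominators produces a polynomial of the required form $q^d e_J - s \in I(X)$, completing the induction.
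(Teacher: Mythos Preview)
Your proof is correct and uses the same core ingredients as the paper's: induction on $|J^c|$, the derivations $f_i\wedge f_j$ to increase the missing-set size, and $\gl(E_\infty)$-elements to move among sets of a fixed size. The organization, however, is genuinely different. The paper always acts on (descendants of) $p_\ell$: at each level $m$ it first produces $\pm e_{\{m+1,m+2,\ldots\}}\cdot q + r$ with $r$ involving only variables of size $<m$, then uses $\gl(E_\infty)$ to reach an arbitrary $e_J$ of size $m$ while keeping the leading coefficient equal to $q$, and finally invokes the induction hypothesis to eliminate the size-$\geq n$ variables in the remainder all at once. You instead act with $e_i\wedge f_j$ and $f_i\wedge f_j$ on the already-established relations $q^{d_K}e_K - s_K$, which forces you to substitute $e_K$ (and any newly created size-$n$ variables) at every step and lets the exponent of $q$ grow; in return, your $s$ stays in $R^+_{\leq n-2}$ throughout. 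Your choice $i,j>n$ in the inductive step is the right one (it controls the size increase in $(f_i\wedge f_j)\cdot q$ and $(f_i\wedge f_j)\cdot s_K$), and the pigeonhole check $|J^c\cap\{n+1,n+2,\ldots\}|\geq (m+2)-n\geq 2$ is what makes that choice possible. Both routes are valid; yours is a bit more explicit about the bookkeeping, while the paper's keeps the leading coefficient fixed at $q$.
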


\begin{proof}
    We proceed by induction on $|J^c|=:m$. By successively acting  on $p_\ell$ with the elements $f_n \wedge f_{n+1},f_{n+2} \wedge f_{n+3},\ldots,f_{m-1}\wedge f_m$, we find the polynomial
    \[ 
        \pm e_{\{m+1,m+2,\ldots\}} \cdot q + r 
    \]
    in the ideal of $X$, where $r$ contains only variables $e_L$ with $|L^c|<m$. Now act with elements of $\gl(E_\infty)$ to obtain an element 
    \[ 
        \pm e_J \cdot q + \tilde{r} 
    \]
    where $\tilde{r}$ still contains only variables $e_L$ with $|L^c|<m$. Inverting $q$, this can be used to express $e_J$ in such variables $e_L$. By the induction hypothesis, all those $e_L$ admit an expression, on $Z$, as a polynomial in $R^+_{\leq n-2}$ times a negative power of $q$. Then the same holds for $e_J$. 
\end{proof}

\begin{lemma} \label{lm:ZNoeth}
    The open subscheme $Z=X[1/q]$ is stable under the group $H_n$ and $H_n$-Noetherian.
\end{lemma}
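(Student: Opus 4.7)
For the $H_n$-stability of $Z$, I start from the fact that $X$ is $\Spin(V_\infty)$-stable and hence $H_n$-stable, so it suffices to prove that $q$ is scaled by a nonzero scalar under every element of $H_n$. Each variable $e_J$ appearing in $q$ satisfies $J^c \subseteq \{1,\dots,n\}$, so in the model $\Wedge_\infty^+ E_\infty$ it decomposes as $e_J = e_{j_1} \wedge \cdots \wedge e_{j_m} \wedge e_{n+1} \wedge e_{n+2} \wedge \cdots$ with the first $m$ factors fixed by $H_n$. A short computation using the twist formula \eqref{re:Twist} shows that each $A \in \gl(E_\infty)_n$ acts on such an $e_J$ by the scalar $\tfrac{1}{2}\tr(A)$; integrating, the algebraic character $(g_0,t) \mapsto t$ of $H_n = \{(g_0,t) \in \GL(E_\infty)_n \times K^\times \mid \det g_0 = t^2\}$ describes the action on $K\cdot e_J$. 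Hence every $g \in H_n$ scales the homogeneous polynomial $q$ by a nonzero unit, and $Z = X[1/q]$ is $H_n$-stable.

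For $H_n$-Noetherianity, the plan is to reduce $Z$ to a ``bounded-width'' scheme where \Cref{prop:GLE} applies. By \Cref{lm:Z}, for every variable $e_J$ with $|J^c| \geq n$ the ideal $I(Z) \subseteq R^+[1/q]$ contains an element of the form $e_J - s_J/q^{d_J}$ with $s_J \in R^+_{\leq n-2}$. Every variable $e_J$ of $R^+$ therefore lies in the image of the natural map
\[
    R^+_{\leq n-2}[1/q] \longrightarrow R^+[1/q]/I(Z)
\]
---directly if $|J^c| \leq n-2$ and modulo $I(Z)$ if $|J^c| \geq n$, which exhausts all cases since $n$ is even---so this map is surjective. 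Dualising gives a closed embedding $Z \hookrightarrow \Spec(R^+_{\leq n-2}[1/q])$, which is $H_n$-equivariant because $R^+_{\leq n-2}$ is preserved by $\gl(E_\infty)$ (which preserves the grading by number of missing indices) and $q$ is scaled by a unit.

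To conclude that $\Spec(R^+_{\leq n-2}[1/q])$ is $H_n$-Noetherian, I would write $R_{\leq n-2} \cong R^+_{\leq n-2} \otimes R^-_{\leq n-2}$ as $H_n$-algebras and realise $\Spec(R^+_{\leq n-2})$ as the closed $H_n$-stable subscheme of $\Spec(R_{\leq n-2})$ cut out by the generators of $R^-_{\leq n-2}$; by \Cref{prop:GLE} the ambient scheme is $H_n$-Noetherian, whence so are $\Spec(R^+_{\leq n-2})$, its principal open $\Spec(R^+_{\leq n-2}[1/q])$, and the closed subscheme $Z$ therein. The main conceptual step is recognising that \Cref{lm:Z} provides exactly the equivariant reduction to bounded width, after which the Noetherianity of algebraic $\GL$-representations from \cite{Eggermont22}, packaged in \Cref{prop:GLE}, finishes the argument.
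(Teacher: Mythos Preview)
Your proof is correct and follows essentially the same route as the paper: use \Cref{lm:Z} to obtain a closed $H_n$-equivariant embedding of $Z$ into $\Spec(R^+_{\leq n-2}[1/q])$, then invoke \Cref{prop:GLE}. Your stability argument is slightly more direct than the paper's---you compute the $H_n$-action on the variables of $q$ via the twist formula \eqref{re:Twist}, whereas the paper first passes to the standard $\GL(E_\infty)_n$-action (noting that each such variable is scaled by $\det(g)$) and then invokes \Cref{lm:GLE} to return to $H_n$---but the content is the same. You are also more explicit than the paper in bridging the gap between $R^+_{\leq n-2}$ and $R_{\leq n-2}$ via the tensor decomposition, which the paper glosses over.
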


\begin{proof}
    By Lemma~\ref{lm:GLE}, $X$ is stable under $\GL(E_\infty)_n$. The polynomial $q$ is homogeneous and contains only variables $e_J$ with $J \supseteq \{n+1,n+2,\ldots\}$. Every $g \in \GL(E_\infty)_n$ scales each such variable with $\det(g)$, and hence maps $q$ to a scalar multiple of itself. We conclude that $Z$ is stable under $\GL(E_\infty)_n$, hence by (a slight variant of) Lemma~\ref{lm:GLE} also under $H_n$.

    By Lemma~\ref{lm:Z}, the projection dual to the inclusion $R^+_{\leq n-2}[1/q] \subseteq R^+[1/q]$ restricts on $Z$ to a closed embedding, and this embedding is $H_n$-equivariant. By Proposition~\ref{prop:GLE}, the image of $Z$ is $H_n$-Noetherian, hence so is $Z$ itself.
\end{proof}

\begin{proof}[Proof of Theorem~\ref{thm:Main}]
    Let 
    \[ 
        X \supseteq X_1 \supseteq \ldots 
    \]
    be a chain of reduced, $\Spin(V_{\infty})$-stable closed subschemes. Let $Y \subseteq X$ be the reduced closed subscheme defined by the orbit $\Spin(V_{\infty}) \cdot q$. Since $q$ has degree $\delta_X-1$, we have $\delta_Y < \delta_X$ and hence $Y$ is $\Spin(V_{\infty})$-Noetherian by the induction hypothesis. It follows that the chain
    \[
        Y \supseteq (Y \cap X_1)^\red \supseteq \ldots 
    \]
    is eventually stable. On the other hand, the chain 
    \[ 
        Z \supseteq (Z \cap X_1)^\red \supseteq \ldots 
    \]
    consists of reduced, $H_n$-stable closed subschemes of $Z$, hence it is eventually stable by Lemma~\ref{lm:ZNoeth}.

    Now pick a
     (not necessarily closed)
    point $P \in X_i$ for $i \gg 0$. If $P \in Y \cap X_i$, then $P \in Y \cap X_{i-1}$ by the first stabilisation. On the other hand, if $P \not \in Y \cap X_i$, then there exists a $g \in \Spin(V_\infty)$ such that $gP \in Z$. Then $gP$ lies in $X_i \cap Z$, which by the second stabilisation equals $X_{i-1} \cap Z$, hence $P=g^{-1}(gP)$ lies in $X_{i-1}$, as well. We conclude that the chain $(X_i)_i$ of closed, reduced subschemes of $X$ stabilises. Hence the inverse half-spin representation $(\Wedge^+_\infty E_\infty)^*$ is topologically $\Spin(V_\infty)$-Noetherian.
\end{proof}

\begin{remark}\label{rem:so-noetherianity}
    While the proof of Theorem~\ref{thm:Main} for the even half-spin case is easily adapted to a proof for the odd half-spin case, we \textit{do not know whether the spin representation $(\Wedge_\infty E_\infty)^*$ itself is topologically $\Spin(V_\infty)$-Noetherian!} Also, despite much effort, we have not succeeded in proving that the inverse limit $\varprojlim_n \Wedge^n V_n$ along the contraction maps $c_{e_n}$ is topologically $\SO(V_\infty)$-Noetherian. Indeed, the situation is worse for this question: like the inverse spin representation, this limit is the dual of a countable-dimensional module that splits as a direct sum of two $\SO(V_\infty)$-modules---and here we do not even know whether the dual of \textit{one} of these modules is topologically Noetherian!
\end{remark}

\section{Half-spin varieties and applications}
\label{s:spin-vars}

In this section we introduce the notion of half-spin varieties and reformulate our main result \cref{thm:Main} in this language. We start by fixing the necessary data determining the half-spin representations of $\Spin(V)$.

\begin{notation}
    As shorthand, we write $\mathcal{V}=(V,q,F) \in
    \mathcal{Q}$ to refer to a triple where
    \begin{enumerate}
        \item $V$ is an even-dimensional vector space over $K$,
        \item $q$ is a nondegenerate symmetric quadratic form on $V$, and
        \item $F$ is a maximal isotropic subspace of $V$. 
    \end{enumerate}
    An \textit{isomorphism} $\cV \to \cV'=(V',q',F')$ of such triples
    is a linear bijection $\phi: V \to V'$ with $q'(\phi(v))=q(v)$
    and $\phi(F)=F'$.
\end{notation}

Given a triple $\cV$, we have half-spin representations $\Cl^{\pm}(V)f$, where $f=f_1 \cdots f_n$ with $f_1,\ldots,f_n$ a basis of $F$ (recall that the left ideal $\Cl(V)f$ does not depend on this basis). Half-spin varieties are $\Spin(V)$-invariant subvarieties of these half-spin representations that are preserved by the contraction maps $\pi_e$ from \S\ref{ssec:Contraction} and the multiplication maps $\tau_h$ from \S\ref{ssec:Multiplication}. The precise definition below is inspired by the that of a Pl\"ucker variety in \cite{DE18}. It involves a uniform choice of either even or odd half-spin representations. For convenience of notation, we will only explicitly work with the even half-spin representations, but all further results are valid for the odd counterparts as well.

\begin{definition}[Half-spin variety] \label{d:spin-variety}
    A \textit{half-spin variety} is a rule $X$ that assigns to
    each triple $\cV=(V,q,F) \in \cQ$ a closed, reduced subscheme $X(\cV) \subseteq \Cl^+(V)f$ such that
    \begin{enumerate}
        \item $X(\cV)$ is $\Spin(V)$-stable;
        \item for any isomorphism $\phi:\cV \to \cV'$, the map $\Cl^+(\phi)$ maps $X(\cV)$ into $X(\cV')$; 
        \item for any isotropic $e \in V$ with $e \not \in F$, if we set $V':=e^\perp/\lspan{e}$, $q'$ the induced form on $V'$, $F'$ the image of $F \cap e^\perp$ in $V'$, and $\cV':=(V',q',F')$, then the contraction map $\pi_e:\Cl^+(V)f \to \Cl^+(V')f'$ maps $X(\cV)$ into $X(\cV')$; and 
        \item for any $\cV=(V,q,F)$, if we construct a triple $\cV'$ by setting $V':=V \oplus \lspan{e,h}$, $q'$ the quadratic form that restricts to $q$ on $V$, makes the direct sum orthogonal, and makes $e,h$ a hyperbolic basis, if we set $f':=f \cdot h$, then the map $\tau_h: \Cl^+(V)f \to \Cl^+(V')f'$ maps $X(\cV)$ into $X(\cV')$. 
	\end{enumerate}
\end{definition}

\begin{example} The following are
examples of half-spin varieties.
    \begin{enumerate}
      \item Trivially, $X(\cV) := \Cl^+(V)f$, $X(\cV) :=
      \{0\}$ and $X(\cV) := \emptyset$ define half-spin varieties.
     \item The even component of the cone over the isotropic
     Grassmannian, $X(\cV):=\widehat{\Gr}_{\iso}^+(V,q)$ is a
     half-spin variety by \Cref{prop:properties}. 
     \item For two half-spin varieties $X$ and $X'$ their \textit{join} $X + X'$ defined by
        \[
            (X + X')(\cV) := \overline{\{x+x' \mid x \in X(\cV), x' \in X'(\cV)\}}
        \]
        is a half-spin variety.
      \item The intersection of two half-spin varieties $X$ and $X'$
      is a half-spin variety, which is defined by $(X \cap X')(\cV) :=
      X(\cV) \cap X'(\cV)$.
   \end{enumerate}
\end{example}

Similar as in \S\ref{sec:infinte half-spin representations} we will use the following notation: for every $n \in \N$, we consider the vector space $V_n = \langle e_1,\dots, e_n,f_1,\dots,f_n\rangle$ together with the quadratic form $q_n$ whose corresponding bilinear form $(\cdot|\cdot)$ satisfies 
\[
    (e_i|e_j)=0, \quad (f_i|f_j)=0 \quad \text{and} \quad (e_i|f_j)=\delta_{ij}.
\]
Furthermore, let $E_n=\langle e_1,\dots,e_n\rangle$ and $F_n=\langle
f_1,\dots,f_n\rangle$; these are maximal isotropic subspaces of $V_n$. We will denote the associated tuple by $\cV_n=(V_n,q_n,F_n)$. 

\begin{remark}\label{rem:spin variety determined by Xn} 
    A half-spin variety $X$ is completely determined by the values $X(\cV_n)$, that is, if $X$ and $X'$ are half-spin varieties such that $X(\cV_n) = X'(\cV_n)$ for all $n \in \N$, then $X(\cV) = X'(\cV)$ for all tuples $\cV$.
\end{remark}

We now want to associate to each half-spin variety $X$ an infinite-dimensional scheme $X_\infty$ embedded inside the inverse half-spin representation $(\Wedge^+_\infty E_\infty)^*$ as follows.  Since $V_n = E_n \oplus F_n$, we can use the isomorphism from \S\ref{ssec:ExplicitFormulas} to embed $X(\cV_n)$ as a reduced subscheme of $\bigwedge^+ E_n$ (recall from \S\ref{subsec:half-spin repr} that we view $\bigwedge^+ E_n$ as the affine scheme with coordinate ring ${\rm Sym}(\Wedge^{+(-1)^n}E_n)$).
We abbreviate $X_n \coloneqq X(\cV_n) \subseteq \bigwedge^+E_n$.

For $N \geq n$ let $\pi_{N,n}: \bigwedge \nolimits^+E_N \to \bigwedge \nolimits^+E_n$, resp. $\tau_{n,N}: \bigwedge \nolimits^+E_n \to \bigwedge \nolimits^+E_N$ be the maps induced by by canonical projection $E_N \to E_n$, resp.\ by the injection $E_n \hookrightarrow E_N$. Note that $\tau_{n,N}$ is a section of $\pi_{N,n}$. Recall that $(\Wedge^+_\infty E_\infty)^* = \varprojlim_n \bigwedge^+E_n$. We denote the structure maps by $\pi_{\infty,n}: (\Wedge^+_\infty E_\infty)^* \to \bigwedge \nolimits^+E_n$ and by $\tau_{n,\infty}: \bigwedge \nolimits^+E_n \to (\Wedge^+_\infty E_\infty)^*$ the inclusion maps induced by $\tau_{n,N}$.

From the definition of a half-spin variety it follows that 
\begin{equation} \label{eq:pi&tau}
    \pi_{N,n}(X_N) \subseteq X_n \quad \text{and} \quad \tau_{n,N}(X_n) \subseteq X_N.
\end{equation}
Hence the inverse limit 
\[
    X_\infty \coloneqq \underset{n}{\varprojlim}\ X_n
\]
is well-defined, and a closed, reduced, $\Spin(V_\infty)$-stable subscheme of $(\Wedge^+_\infty E_\infty)^*$. In order to see this, write $R_n \coloneqq \Sym(\bigwedge^{+(-1)^n}E_n)$ and $R_\infty \coloneqq \Sym(\Wedge^+_\infty E_\infty)$. Let $I_n \subseteq R_n$ be the radical ideal associated to $X_n \subseteq \Spec(R_n)$, i.e. $X_n = V(I_n) = \Spec(R_n/I_n)$. As $\Spec(\cdot)$ is a contravariant equivalence of categories, it holds that
\[
    X_\infty \coloneqq \varprojlim_n X_n = \varprojlim_n \Spec(R_n/I_n) = \Spec(\varinjlim_n(R_n/I_n)).
\]
So $X_\infty$ corresponds to the ideal $I_\infty \coloneqq \varinjlim_n I_n \subseteq R_\infty$. As all $I_n \subseteq R_n$ are radical, so is $I_\infty \subseteq R_\infty$ and therefore $X_\infty$ is a reduced subscheme. 

It follows from \Cref{eq:pi&tau} that 
\begin{equation} \label{eq:pi&tau_inf}
     \pi_{\infty,n}(X_\infty) \subseteq X_n \quad \text{and} \quad \tau_{n,\infty}(X_n) \subseteq X_\infty.   
\end{equation}

\begin{lemma} \label{lemma:injective map}
    The mapping 
    \[
        X \mapsto X_\infty
    \]
    is injective. That is, if $X$ and $X'$ are half-spin varieties such that $X_\infty = X'_\infty$, then $X = X'$, i.e. $X(\cV) = X'(\cV)$ for all tuples $\cV$.
\end{lemma}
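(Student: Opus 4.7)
By \Cref{rem:spin variety determined by Xn}, it suffices to show that the data $X_\infty$ recovers $X_n = X(\cV_n)$ for every $n \in \N$. Thus I will prove the slightly stronger statement that $X_n$ can be read off from $X_\infty$, independently of $X$.

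The plan is to translate everything into ideals in the coordinate rings. Write $R_n = \Sym(\Wedge^{+(-1)^n} E_n)$ and $R_\infty = \Sym(\Wedge_\infty^+ E_\infty) = \varinjlim_n R_n$, where the transition maps $R_n \to R_{n+1}$ are the inclusions $\pi_{n+1,n}^\ast$ dual to the projections $\pi_{n+1,n}$. Let $I_n \subseteq R_n$ be the (radical) ideal of $X_n$, and let $I_\infty = \varinjlim_n I_n \subseteq R_\infty$ be the ideal of $X_\infty$, as in the discussion preceding the lemma. The key claim is:
\[
    I_n = I_\infty \cap R_n \for \text{every } n \in \N,
\]
where $R_n$ is identified with its image in $R_\infty$.

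The inclusion $I_n \subseteq I_\infty \cap R_n$ is immediate from the definition of $I_\infty$. For the reverse inclusion, I would take $f \in I_\infty \cap R_n$. Then $f \in I_m$ for some $m \geq n$, i.e.\ $f$ vanishes on $X_m$. For any point $x \in X_n$, \eqref{eq:pi&tau} gives $\tau_{n,m}(x) \in X_m$, so $f(\tau_{n,m}(x)) = 0$. But the crucial identity $\pi_{m,n} \circ \tau_{n,m} = \id$ established in \S\ref{ssec:Multiplication} forces $f(\tau_{n,m}(x)) = f(x)$ (since $f \in R_n$ means $f = g \circ \pi_{m,n}$ for some $g$, and in particular $f$ factors through $\pi_{m,n}$). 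Hence $f$ vanishes on $X_n$, i.e.\ $f \in I_n$. To upgrade this from vanishing to ideal membership for possibly non-closed points, I would work directly on the level of ideals: the composition
\[
    R_n \xrightarrow{\ \pi_{m,n}^\ast\ } R_m \xrightarrow{\ \tau_{n,m}^\ast\ } R_n
\]
is the identity, so $\tau_{n,m}^\ast$ is a retraction sending $I_m$ into $I_n$ (by $\tau_{n,m}(X_n) \subseteq X_m$), from which $I_m \cap R_n \subseteq I_n$ follows at once. Taking the union over $m \geq n$ gives the claim.

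Once this is in hand, the lemma is immediate: if $X_\infty = X_\infty'$, then $I_\infty = I_\infty'$ as ideals in $R_\infty$, so for every $n$
\[
    I_n = I_\infty \cap R_n = I_\infty' \cap R_n = I_n',
\]
hence $X_n = X_n'$ for all $n$, and therefore $X = X'$ by \Cref{rem:spin variety determined by Xn}. The only subtle point is the retraction argument above; the rest is essentially bookkeeping with the colimit structure and the splitting $\pi_{m,n} \circ \tau_{n,m} = \id$.
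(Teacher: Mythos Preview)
Your proof is correct and is essentially the algebraic dual of the paper's argument: the paper shows the geometric identity $X_n = \pi_{\infty,n}(X_\infty)$ directly, while you prove the equivalent ideal-theoretic statement $I_n = I_\infty \cap R_n$; in both cases the crucial input is the section property $\pi_{m,n}\circ\tau_{n,m}=\id$ (equivalently, that $\tau_{n,m}^\ast$ is a retraction of $\pi_{m,n}^\ast$). The only difference is presentation, not substance.
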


\begin{proof} 
    Note that, for all $n \in \N$, it holds that
    \[
        X_n = \pi_{\infty,n}(X_\infty).
    \]
    Indeed, the inclusion $\supseteq$ is contained in \Cref{eq:pi&tau_inf}, and the other direction $\subseteq$ follows from the fact that $\tau_{n,\infty}:X_n \to X_\infty$ is a section of $\pi_{\infty,n}$. 
    Hence, if $X_\infty = X'_\infty$, then
    \[
        X_n = \pi_{\infty,n}(X_\infty)=\pi_{\infty,n}(X'_\infty)=X'_n.
    \]
    By \Cref{rem:spin variety determined by Xn} this shows that $X = X'$.
\end{proof}

For two half-spin varieties $X$ and $X'$, we will write $X \subseteq
X'$  if $X(\cV) \subseteq X'(\cV)$ for all $\cV = (V,q,F)$.
\cref{thm:Main} then implies the following.

\begin{theorem}[Noetherianity of half-spin varieties] \label{t:spin-var-top-noeth} 
    Every descending chain of half-spin varieties 
    \[
        X^{(0)} \supseteq X^{(1)} \supseteq X^{(2)} \supseteq X^{(3)} \supseteq \ldots
    \]
    stabilizes, that is, there exists $m_0 \in \N$ such that $X^{(m)} = X^{(m_0)}$ for all $m \geq m_0$. 
\end{theorem}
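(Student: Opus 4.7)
The plan is to reduce the statement to Theorem~\ref{thm:Main} by passing to the inverse limit and invoking the injectivity of the assignment $X \mapsto X_\infty$ established in Lemma~\ref{lemma:injective map}. The key observation is that this assignment is not only injective but also inclusion-preserving, so a descending chain of half-spin varieties gives a descending chain of closed, reduced, $\Spin(V_\infty)$-stable subschemes of $(\Wedge^+_\infty E_\infty)^*$, to which the main theorem applies directly.

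First, I would verify monotonicity: if $X \subseteq X'$, meaning $X(\cV) \subseteq X'(\cV)$ for every triple $\cV$, then in particular $X_n = X(\cV_n) \subseteq X'(\cV_n) = X'_n$ inside $\Wedge^+ E_n$ for every $n$. Since both $X_\infty$ and $X'_\infty$ sit inside $(\Wedge^+_\infty E_\infty)^* = \varprojlim_n \Wedge^+ E_n$ as inverse limits of these finite-level subschemes, the inclusions $X_n \subseteq X'_n$ pass to an inclusion $X_\infty \subseteq X'_\infty$.

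Applying this to the given descending chain $X^{(0)} \supseteq X^{(1)} \supseteq X^{(2)} \supseteq \cdots$ therefore yields a descending chain
\[
    X^{(0)}_\infty \supseteq X^{(1)}_\infty \supseteq X^{(2)}_\infty \supseteq \cdots
\]
of closed, reduced, $\Spin(V_\infty)$-stable subschemes of $(\Wedge^+_\infty E_\infty)^*$; that each $X^{(m)}_\infty$ has these properties has already been checked in the discussion surrounding \eqref{eq:pi&tau} and \eqref{eq:pi&tau_inf}. By Theorem~\ref{thm:Main}, there exists $m_0 \in \N$ such that $X^{(m)}_\infty = X^{(m_0)}_\infty$ for all $m \geq m_0$. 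The injectivity of the map $X \mapsto X_\infty$ given by Lemma~\ref{lemma:injective map} then forces $X^{(m)} = X^{(m_0)}$ for all $m \geq m_0$, as required.

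There is no real obstacle once the infinite-dimensional machinery is in place: the entire content of the statement has been packaged into Theorem~\ref{thm:Main} and Lemma~\ref{lemma:injective map}. The only point meriting explicit mention is the monotonicity of $X \mapsto X_\infty$, which is immediate from the construction via inverse limits but is needed to ensure that the chain we feed into Theorem~\ref{thm:Main} is genuinely descending.
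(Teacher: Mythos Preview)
Your proof is correct and follows essentially the same approach as the paper: both establish that $X \mapsto X_\infty$ is order-preserving, apply Theorem~\ref{thm:Main} to the resulting descending chain in $(\Wedge^+_\infty E_\infty)^*$, and then invoke Lemma~\ref{lemma:injective map} to transfer stabilisation back. Your version is slightly more explicit about why monotonicity holds, but the argument is otherwise identical.
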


\begin{proof}
    Note that the mapping $X \mapsto X_\infty$ is order preserving, that is, if $X \subseteq X'$, then $X_\infty \subseteq X'_\infty$. Hence, a chain 
    \[
      X^{(0)} \supseteq X^{(1)} \supseteq X^{(2)} \supseteq X^{(3)} \supseteq \ldots
    \]
    of half-spin varieties induces a chain 
    \[
       X^{(0)}_\infty \supseteq X^{(1)}_\infty \supseteq X^{(2)}_\infty \supseteq X^{(3)}_\infty \supseteq \ldots
    \]
    of closed, reduced, $\Spin(V_\infty)$-stable subschemes in $(\Wedge^+_\infty E_\infty)^*$. By \Cref{thm:Main} we know that $(\Wedge^+_\infty E_\infty)^*$ is topologically $\Spin(V_\infty)$-Noetherian. Hence, the chain $X^{(m)}_\infty$ stabilizes. But then, by \Cref{lemma:injective map} also the chain of half-spin varieties $X^{(m)}$ stabilizes. This completes the proof.
\end{proof}

As a consequence we obtain the next results, which state how $X_\infty$ is determined by the data coming from some finite level of $X$. 

\begin{theorem}
    
\label{thm:infinite_to_finite_spinvariety}
    Let $X$ be a half-spin variety. Then there exists $n_0 \in \N$ such that 
    \[
        X_\infty = V\big(\rad( \Spin(V_\infty) \cdot I_{n_0} )\big),
    \]
    where $\rad( \Spin(V_\infty) \cdot I_{n_0} ) \subseteq \Sym(\Wedge^+_\infty E_\infty)$ is the radical ideal generated by the $\Spin(V_\infty)$-orbits of the ideal $I_{n_0} \subseteq \Sym(\bigwedge^{+(-1)^{n_0}}E_{n_0})$ defining $X_{n_0} \subseteq \bigwedge^+E_{n_0}$.
\end{theorem}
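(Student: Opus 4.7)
The plan is to realise $X_\infty$ as the eventual limit of a descending chain of $\Spin(V_\infty)$-stable closed subschemes cut out by the $\Spin(V_\infty)$-orbits of the finite-level ideals $I_n$, and to invoke Theorem~\ref{thm:Main} to force this chain to terminate. First, I unpack the ideal picture. Under the identification of Remark~\ref{rem:Flip in PsiDual}, the maps $\psi_{e_n}$ yield inclusions $R_n \hookrightarrow R_\infty$ whose duals on spectra are the contractions $\pi_{\infty,n}$. Since $X$ is a half-spin variety, the projections $\pi_{N,n}$ send $X_N$ into $X_n$, which on coordinate rings means that the image of $I_n$ in $R_N$ (respectively $R_\infty$) lies inside $I_N$ (respectively $I_\infty$). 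Combined with the identification $\Spec(R_\infty/I_\infty)=\varinjlim_n \Spec(R_n/I_n)$ from the discussion preceding Lemma~\ref{lemma:injective map}, this gives
\[
    I_\infty \;=\; \bigcup_{n \in \N} I_n \quad \text{in } R_\infty.
\]

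Next, for each $n$ set $J_n:=(\Spin(V_\infty)\cdot I_n)$, the ideal of $R_\infty$ generated by the $\Spin(V_\infty)$-orbit of $I_n$, and let $Y_n:=V(\sqrt{J_n})$. Each $Y_n$ is a closed, reduced, $\Spin(V_\infty)$-stable subscheme of $(\Wedge_\infty^+ E_\infty)^*$. Because $I_\infty$ is itself a $\Spin(V_\infty)$-stable radical ideal containing each $I_n$, we have $\sqrt{J_n}\subseteq I_\infty$, hence $X_\infty\subseteq Y_n$ for every $n$. Moreover, the inclusions $I_n\subseteq I_{n+1}$ yield $\sqrt{J_n}\subseteq\sqrt{J_{n+1}}$, so we obtain a descending chain
\[
    Y_1 \supseteq Y_2 \supseteq Y_3 \supseteq \cdots \supseteq X_\infty
\]
of $\Spin(V_\infty)$-stable closed reduced subschemes of $(\Wedge_\infty^+ E_\infty)^*$.

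By Theorem~\ref{thm:Main} this chain stabilises: there exists $n_0$ with $\sqrt{J_n}=\sqrt{J_{n_0}}$ for all $n\geq n_0$. It remains to verify $Y_{n_0}=X_\infty$, equivalently $\sqrt{J_{n_0}}=I_\infty$. The inclusion $\subseteq$ was observed above. For the reverse inclusion, any $f\in I_\infty=\bigcup_n I_n$ lies in some $I_m$ with $m\geq n_0$, and then
\[
    f \;\in\; I_m \;\subseteq\; J_m \;\subseteq\; \sqrt{J_m} \;=\; \sqrt{J_{n_0}},
\]
so $I_\infty\subseteq\sqrt{J_{n_0}}$, completing the proof.

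The genuinely substantive input is Theorem~\ref{thm:Main}; everything else is bookkeeping with ideals inside $R_\infty$. The one step that demands a little care is the verification $I_n\subseteq I_{n+1}$ under the ring inclusion $R_n\hookrightarrow R_{n+1}$: this amounts to recognising that this inclusion is precisely the pullback along the contraction $\pi_{n+1,n}$ that features in Definition~\ref{d:spin-variety}(3), which is exactly the content of Remark~\ref{rem:Flip in PsiDual} together with the construction of the inverse limit in \S\ref{sec:infinte half-spin representations}.
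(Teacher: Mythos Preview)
Your proof is correct and follows essentially the same route as the paper's own argument: define $J_n$ as the radical of the ideal generated by the $\Spin(V_\infty)$-orbit of $I_n$, observe that the $J_n$ increase with union $I_\infty$, and apply Theorem~\ref{thm:Main} to force stabilisation at some $n_0$. The only cosmetic difference is that you phrase the stabilisation as a descending chain of varieties $Y_n=V(\sqrt{J_n})$ rather than an ascending chain of radical ideals, and you spell out a bit more carefully why $I_n\subseteq I_{n+1}$ under the inclusions $R_n\hookrightarrow R_\infty$.
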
 

\begin{proof}
    For each $n \in \N$ set $J_n \coloneqq \rad(\Spin(V_\infty) \cdot I_{n}) \subseteq \Sym(\Wedge^+_\infty E_\infty)$. We denote by $I_\infty \subseteq \Sym(\Wedge^+_\infty E_\infty)$ the ideal associated to $X_\infty$. This ideal is $\Spin(V_\infty)$-stable, radical and it holds that $I_\infty = \varinjlim_n I_n$. Thus, $\bigcup_n J_n = I_\infty$. Since $(J_n)_{n\in \N}$ is an increasing chain of closed $\Spin(V_\infty)$-stable radical ideals, by \Cref{thm:Main} there exists $n_0 \in \N$ such that $J_n=J_{n_0}$ for all $n\geq n_0$. Therefore, $I_\infty = \bigcup_n J_n = J_{n_0}$ and hence $X_\infty=V(I_\infty)=V(J_{n_0}).$ 
\end{proof}

\begin{corollary} \label{cor:finite_to_finite_spinvariety}
    Let $X$ be a half-spin variety. There exists $n_0 \in \N$ such that for all $n\geq n_0$ it holds that
    \[
        X_n = V(\rad(\Spin(V_n)\cdot I_{n_0})).
    \]
\end{corollary}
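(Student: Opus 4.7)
\textbf{Setup via Theorem \ref{thm:infinite_to_finite_spinvariety}.} By \Cref{thm:infinite_to_finite_spinvariety}, fix $n_0\in\N$ with $X_\infty = V(\rad_{R_\infty}(\Spin(V_\infty)\cdot I_{n_0}))$. For each $n\geq n_0$, set $J^{(n)} := \rad_{R_n}(\Spin(V_n)\cdot I_{n_0})\subseteq R_n$ and $Y_n := V(J^{(n)})\subseteq \bigwedge^+ E_n$, so $Y_n$ is the proposed description of $X_n$. The easy inclusion $X_n\subseteq Y_n$ is immediate: the ideal $I_n$ of $X_n$ is radical, $\Spin(V_n)$-stable, and contains $I_{n_0}$, so $I_n\supseteq \rad(\Spin(V_n)\cdot I_{n_0}) = J^{(n)}$.

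\textbf{Reducing to a half-spin-variety argument.} For the reverse inclusion I would like to realize the rule $n\mapsto Y_n$ (for $n\geq n_0$) as (the values at $\cV_n$ of) a half-spin variety $\mathcal{Y}$. Given such a $\mathcal{Y}$, we would have $X\subseteq \mathcal{Y}$ from $X_n\subseteq Y_n$, and $X_\infty = \mathcal{Y}_\infty$ both equal $V(\rad(\Spin(V_\infty)\cdot I_{n_0}))$ by construction. \Cref{lemma:injective map} then forces $X = \mathcal{Y}$, giving $X_n = Y_n$ for every $n\geq n_0$.

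\textbf{Verifying the axioms.} The $\Spin(V)$-stability and functoriality axioms are clear from the construction, and the contraction axiom $\pi_{n+1,n}(Y_{n+1})\subseteq Y_n$ holds because, for any $y\in Y_{n+1}$ and $g\in \Spin(V_n)\subseteq \Spin(V_{n+1})$, we have $\pi_{n,n_0}(g\cdot \pi_{n+1,n}(y)) = \pi_{n+1,n_0}(g\cdot y)\in X_{n_0}$. The \emph{multiplication axiom} $\tau_{f_{n+1}}(Y_n)\subseteq Y_{n+1}$ is the technical heart: one must show that for every $y\in Y_n$ and every $h\in \Spin(V_{n+1})$ the projection $\pi_{n+1,n_0}(h\cdot\tau(y))$ still lies in $X_{n_0}$. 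This reduces via a root-by-root computation to verifying how the root vectors of $\so(V_{n+1})$ outside $\so(V_n)$ (i.e.\ those involving $e_{n+1}$ or $f_{n+1}$) act on $\tau(y)$; the unipotent contributions involving $e_{n+1}$ are killed by $\pi_{n+1,n_0}$, while those involving $f_{n+1}$ act trivially on $\tau(y)$ since $n+1\notin J$ for any basis element of $\tau(y)\in\bigwedge^+E_n$.

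\textbf{Stabilization via half-spin Noetherianity.} If the multiplication axiom fails for the initial choice of $n_0$, the plan is to iterate: for each $N\geq n_0$, define $\mathcal{Z}^{(N)}$ analogously from $I_N$ in place of $I_{n_0}$. The containments $I_N\subseteq I_{N+1}$ yield a descending chain of (candidate) half-spin varieties $\mathcal{Z}^{(n_0)}\supseteq \mathcal{Z}^{(n_0+1)}\supseteq\cdots$ sandwiched between $X$ and $\mathcal{Y}$. By \Cref{t:spin-var-top-noeth}, this chain stabilizes at some $N^*\geq n_0$, and at that point $Y_n^{(N^*)}$ agrees with $X_n$ for all $n\geq N^*$; replacing $n_0$ by $N^*$ yields the corollary as stated.

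\textbf{Main obstacle.} The crux is the multiplication axiom $\tau_{f_{n+1}}(Y_n)\subseteq Y_{n+1}$: general $h\in \Spin(V_{n+1})$ are products of root-vector exponentials, and while each individual root vector contribution is controllable by the explicit formulas of \Cref{ssec:ExplicitFormulas}, the composition of a ``contraction-type'' exponential (involving $f_{n+1}$) with an ``outer-type'' one (involving $e_{n+1}$) can produce terms in $\bigwedge^+E_n$ that do not come from applying an element of $\Spin(V_n)$ to $y$; controlling these is where the Noetherianity of chains of half-spin varieties (\Cref{t:spin-var-top-noeth}) enters.
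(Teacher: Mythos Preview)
Your overall strategy is sound: once you know that the rule $n\mapsto Y_n:=V(\rad(\Spin(V_n)\cdot I_{n_0}))$ defines a half-spin variety with $Y_\infty=X_\infty$, injectivity (\Cref{lemma:injective map}) finishes the argument. You also correctly isolate the multiplication axiom $\tau_{n,n+1}(Y_n)\subseteq Y_{n+1}$ as the only nontrivial step, and you correctly reformulate it as: for all $y\in Y_n$ and $h\in\Spin(V_{n+1})$, the element $\pi_{n+1,n_0}(h\cdot\tau(y))$ lies in $X_{n_0}$.

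The gap is that you do not actually prove this. Your root-vector sketch breaks down exactly where you say it does: a general $h\in\Spin(V_{n+1})$ is an \emph{alternating} product of exponentials from $\Spin(V_n)$ and from the root spaces involving $e_{n+1},f_{n+1}$, and the cross-terms are not killed by $\pi_{n+1,n_0}$ nor absorbed into $\Spin(V_n)\cdot y$. Your proposed fallback via \Cref{t:spin-var-top-noeth} is circular: that theorem applies to \emph{half-spin varieties}, and its proof uses \Cref{lemma:injective map}, which in turn rests on the multiplication axiom. Your ``candidate'' rules $\mathcal{Z}^{(N)}$ face the very same unverified multiplication axiom, so you cannot invoke \Cref{t:spin-var-top-noeth} for them. (There is also a domain issue: $\mathcal{Z}^{(N)}_n$ is only defined for $n\geq N$, so the $\mathcal{Z}^{(N)}$ are not rules on all of $\cQ$.) Finally, even if you bypassed this and applied \Cref{thm:Main} directly to the chain $(\mathcal{Z}^{(N)}_\infty)_N$, that chain is already constant and equal to $X_\infty$ for $N\geq n_0$, so stabilisation at the infinite level gives no new information at finite level without the multiplication axiom.

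The paper closes this gap by proving a factorisation statement that is strictly stronger than your multiplication axiom and immediately implies it: for generic $g\in\Spin(V_q)$ (and up to a scalar, which is harmless since everything is a cone) one has
\[
\pi_{q,n_0}\circ g\circ\tau_{n,q}\;=\;g''\circ\pi_{n,n_0}\circ g'
\]
for suitable $g'\in\Spin(V_n)$ and $g''\in\Spin(V_{n_0})$. This is exactly the condition~($\ast$) needed in \cite[Lemma~2.1]{Dra10}, and it is established in \Cref{lm:Lower} by transporting the problem along the Cartan map $\hat\nu_2$ to the exterior-power side, where the analogous factorisation $c_E\circ g\circ m_F=g''\circ c_{E'}\circ g'$ can be checked by elementary linear algebra on isotropic subspaces (computing $g^{-1}E\cap(V_n\oplus F)$ and its projection to $V_n$). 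In other words, the missing ingredient is not a Noetherianity trick but an honest geometric computation, and the Cartan map is what makes that computation tractable.
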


\begin{proof}
Take $n_0$ as in Theorem~\ref{thm:infinite_to_finite_spinvariety}.
Then the statement follows from that theorem and \cite[Lemma
2.1]{Dra10}. To apply that lemma, we must check condition (*) in that
paper, namely, that for $q \geq n \geq n_0$ and $g \in \Spin(V_q)$ we can write 
\[ \pi_{q,n_0} \circ g \circ \tau_{n,q} = g'' \circ \tau_{m,n_0}
\circ \pi_{n,m} \circ g' \]
for suitable $m \leq n_0$ and $g' \in \Spin(V_n)$ and $g'' \in
\Spin(V_{n_0})$. In fact, since half-spin varieties are affine cones, it
suffices that this identity holds up to a scalar factor. It also suffices
to prove this for $g$ in an open dense subset $U$ of $\Spin(V_q)$,
because the equations for $X_{n_0}$ pulled back along the map on the left
for $g \in U$ imply the equations for all $g$.  We will prove this,
with $m=n_0$, using the Cartan map in Lemma~\ref{lm:Lower} below.
\end{proof}

\section{Universality of \texorpdfstring{$\^\Gr^+_{\iso}(4,8)$}{the isotropic Grassmannian in dimension 8} and the Cartan map}\label{sec:Cartan}

\subsection{Statement} \label{ssec:universality grassmannian}

In \cite{ST21} the last two authors showed that in even dimension,
the isotropic Grassmannian in its Pl\"ucker embedding is
set-theoretically defined by pulling back equations coming from
$\widehat{\Gr}_{\iso}(4,8)$. Using the \textit{Cartan map} we can
translate this into a statement about the isotropic Grassmannian in its
spinor embedding and prove the following result.

\begin{theorem}\label{cor:isotropic-cartan}
    For all $n\geq 4$ we have 
    \[
        \widehat{\Gr}^+_{\iso}(V_n) = V(\rad(\Spin(V_n)\cdot I_4)), 
    \]
    where $I_4$ is the ideal of polynomials defining
    $\widehat{\Gr}^+_{\iso}(V_4) \subseteq \Cl^+(V_4)f.$
\end{theorem}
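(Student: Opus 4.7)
The plan is to reduce the theorem to the analogous statement for the isotropic Grassmannian in its Pl\"ucker embedding established in \cite{ST21}, using the Cartan map (promised in the proof of Corollary~\ref{cor:finite_to_finite_spinvariety}) as the translation tool. First I would invoke Proposition~\ref{prop:properties}, which says that $(V,q,F) \mapsto \widehat{\Gr}^+_{\iso}(V,q)$ is a half-spin variety; Theorem~\ref{thm:infinite_to_finite_spinvariety} then already produces \emph{some} $n_0$ with $\widehat{\Gr}^+_{\iso}(V_n) = V(\rad(\Spin(V_n) \cdot I_{n_0}))$ for every $n \geq n_0$. The content of the present statement is that $n_0 = 4$ is admissible.

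The inclusion ``$\subseteq$'' comes directly from the half-spin variety structure: iterating $\pi_{e_n}, \pi_{e_{n-1}}, \dots, \pi_{e_5}$ gives a $\Spin(V_4)$-equivariant surjection $\Wedge^+ E_n \twoheadrightarrow \Wedge^+ E_4$ that sends $\widehat{\Gr}^+_{\iso}(V_n)$ into $\widehat{\Gr}^+_{\iso}(V_4)$. Consequently the pullback of $I_4$ vanishes on $\widehat{\Gr}^+_{\iso}(V_n)$, and since this pullback is contained in $\Spin(V_n) \cdot I_4$, the same holds for the radical orbit ideal.

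For the reverse inclusion I would use the Cartan map, a $\Spin(V_n)$-equivariant map from the half-spin representation to an exterior power of $V_n$ (or closely related $\GL$-representation) which carries the spinor-embedded cone $\widehat{\Gr}^+_{\iso}(V_n)$ onto the Pl\"ucker-embedded isotropic Grassmann cone, and is injective on pure spinors (since a pure spinor is recoverable from its associated maximal isotropic subspace). The main result of \cite{ST21} then asserts that this Pl\"ucker-embedded cone, for $n \geq 4$, is set-theoretically cut out by the $\SO(V_n)$-orbit of the ideal at level $4$. Pulling those equations back through the Cartan map, and using equivariance to match $\SO$-orbits with $\Spin$-orbits via the covering $\Spin(V_n) \to \SO(V_n)$, would yield the containment $V(\rad(\Spin(V_n) \cdot I_4)) \subseteq \widehat{\Gr}^+_{\iso}(V_n)$.

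The hard part will be setting up the Cartan map precisely and checking the compatibilities needed for this transfer: that it is equivariant for the natural groups on both sides, that it intertwines the respective contraction and multiplication structure maps (so that the base case at level $4$ in \cite{ST21} matches the base case at level $4$ here), and---most delicately---that the pullback of the Pl\"ucker-side ideal at level $4$ and our $I_4$ generate the same $\Spin(V_n)$-orbit ideal up to radical. Once these compatibilities are in place, the verification at $n = 4$ itself is direct, because both the source and target are of small, concretely computable dimension.
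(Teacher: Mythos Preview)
Your overall strategy---reduce to \cite{ST21} via the Cartan map---is exactly the paper's. But two points in your outline would not go through as written.

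First, ``injective on pure spinors'' is not enough. To conclude that $x$ is a pure spinor from the fact that $\hat{\nu}_2(x)$ lies in the Pl\"ucker cone, you need to know that \emph{no} non-pure spinor is mapped into the Pl\"ucker cone. For this the paper proves (Lemma~\ref{lm:CartanInjective}) that the projectivised Cartan map $\nu_2:\PP V_\lambda \to \PP V_{2\lambda}$ is an injective morphism on \emph{all} of $\PP V_\lambda$, via a Borel fixed-point argument; combined with the fact that $\nu_2$ restricts to an isomorphism between closed orbits, this gives $\hat{\nu}_2^{-1}(\widehat{\Gr}^{\rm Pl}_{\iso})=\widehat{\Gr}^+_{\iso}$. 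Your recoverability remark only gives injectivity on the closed orbit.

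Second, the step you flag as ``most delicate''---matching the pullback of the level-$4$ Pl\"ucker ideal with the $\Spin(V_n)$-orbit of $I_4$---is not how the paper proceeds, and would be awkward because $\hat{\nu}_2$ is quadratic (so pulled-back equations double in degree and need not lie in $I_4$). Instead the paper works pointwise: the hypothesis $x\in V(\rad(\Spin(V_n)\cdot I_4))$ says precisely that $\pi_{n,4}(g\cdot x)\in\widehat{\Gr}^+_{\iso}(V_4)$ for every $g\in\Spin(V_n)$. Using transitivity of $\Spin(V_n)$ on isotropic $(n-4)$-spaces transverse to $F_n$, this is the statement that \emph{every} iterated spin-contraction of $x$ to level~$4$ is pure. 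The intertwining of $\hat{\nu}_2$ with contractions (Proposition~\ref{prop:CartanContraction})---which you mention only as a base-case check---is then the engine of the whole argument: it converts this into the statement that every iterated contraction of $\hat{\nu}_2(x)\in\Wedge^n V_n$ to level~$4$ lies in the Pl\"ucker cone, which is exactly the criterion from \cite{ST21} for $\hat{\nu}_2(x)\in\widehat{\Gr}^{\rm Pl}_{\iso}(V_n)$. No comparison of ideals is needed.
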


In other words, the bound $n_0$ from
Corollary~\ref{cor:finite_to_finite_spinvariety} can be taken equal to
$4$ for the cone over the isotropic Grassmannian.  We give the proof of
\Cref{cor:isotropic-cartan} in \S\ref{subsec:lastproof} using properties
of the Cartan map that will be established in the following sections.

\subsection{Definition of the Cartan map} \label{ssec:Exterior}

When we regard $e_1 \wedge \cdots \wedge e_n$ as an element of the
$n$-th exterior power $\Wedge^n V$ of the standard representation
$V$ of $\so(V)$, then it is a highest weight vector of weight
$(0,\ldots,0,2)=2\lambda_0$, where $\lambda_0$ is the fundamental
weight introduced in \S\ref{ssec:highest weight} and the highest
weight of the half-spin representation $\Cl^{(-1)^n}(V) f$. Similarly, the
element $e_1 \wedge e_2 \wedge \cdots \wedge e_{n-1} \wedge f_n \in
\Wedge^n V$ is a highest weight vector of weight $(0,\ldots,0,2,0)=2
\lambda_1$, where $\lambda_1$ is the highest weight of the other
half-spin representation. So $\Wedge^n V$ contains copies of the irreducible
representations $V_{2\lambda_0},V_{2 \lambda_1}$ of $\so(V)$; in fact,
it is well known to be the direct sum of these. To compare our results in
this paper about spin representations with earlier work by the last two
authors about exterior powers, we will need the following considerations.

Consider any connected, reductive algebraic group $G$, with maximal torus $T$ and Borel subgroup $B \supseteq T$. Let $\lambda$ be a dominant weight of $G$, let $V_{\lambda}$ be the corresponding irreducible representation, and let $v_\lambda \in V_{\lambda}$ be a nonzero highest-weight vector (which is unique up to scalar multiples). Then the symmetric square $S^2 V_\lambda$ contains a one-dimensional space of vectors of weight $2 \lambda$, spanned by $v_{2\lambda}:=v_\lambda^2$. This vector is itself a highest-weight vector, and hence generates a copy of $V_{2\lambda}$; this is sometimes called the \textit{Cartan component} of $S^2 V_\lambda$. By semisimplicity, there is a $G$-equivariant linear projection $\pi: S^2 V_\lambda \to V_{2\lambda}$ that restricts to the identity on $V_{2\lambda}$. The map
\[ 
    \widehat{\nu_2}: V_\lambda \to V_{2\lambda},\quad  v \mapsto \pi(v^2).
\]
is a nonzero polynomial map, homogeneous of degree $2$, and hence induces a rational map $\nu_2:\PP V_{\lambda} \to \PP V_{2\lambda}$. Note that this is the composition of the quadratic Veronese embedding and the projection $\pi$. We will refer to $\nu_2$ and to $\^{\nu_2}$ as the \textit{Cartan map}. 

\begin{lemma} \label{lm:CartanInjective}
The rational map $\nu_2$ is a morphism and injective.
\end{lemma}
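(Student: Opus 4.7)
The plan is to handle the two claims separately. First, for the claim that $\nu_2$ is a morphism, I will show that the zero locus $Z := \widehat{\nu_2}^{-1}(0) \subseteq V_\lambda$ equals $\{0\}$. Since $\widehat{\nu_2}$ is $G$-equivariant and homogeneous of degree $2$, the set $Z$ is a closed, $G$-stable cone. If $Z \supsetneq \{0\}$, then $\PP(Z) \subseteq \PP V_\lambda$ is a non-empty closed $G$-stable subset and therefore contains the unique closed $G$-orbit $\overline{G\cdot [v_\lambda]}$; in particular $v_\lambda \in Z$. But by construction $\widehat{\nu_2}(v_\lambda) = \pi(v_\lambda^2) = v_{2\lambda}$, which is nonzero since $\pi$ is the identity on the Cartan component $V_{2\lambda}$ and $v_{2\lambda}$ is a highest-weight vector. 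This contradiction forces $Z = \{0\}$, so $\nu_2$ extends to a morphism on all of $\PP V_\lambda$.

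For injectivity, I will polarise $\widehat{\nu_2}$ into the $G$-equivariant symmetric bilinear form $B \colon V_\lambda \times V_\lambda \to V_{2\lambda}$, $B(v,w) := \pi(v \cdot w)$. Suppose $\nu_2([v]) = \nu_2([w])$ with $v, w$ nonzero, so $\widehat{\nu_2}(v) = c\,\widehat{\nu_2}(w)$ for some $c \in K^*$. Since $K$ is algebraically closed I may rescale $w$ by a square root of $c$ and assume $\widehat{\nu_2}(v) = \widehat{\nu_2}(w)$, i.e.\ $B(v,v) = B(w,w)$; by symmetric bilinearity this is equivalent to $B(v+w, v-w) = 0$. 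Thus the lemma reduces to the assertion that $B(a,b) \neq 0$ whenever $a, b \in V_\lambda$ are both nonzero, which will then yield $v+w = 0$ or $v-w = 0$ and hence $[v] = [w]$.

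The last assertion I deduce from the fact that the graded vector space $R := \bigoplus_{n \geq 0} V_{n\lambda}$, equipped with the Cartan product (projection of tensor products onto the highest-weight summand) as its multiplication, is an integral domain. Via Borel--Weil, with suitable conventions, $R$ is identified with the homogeneous coordinate ring $\bigoplus_n H^0(X_\lambda, L_\lambda^{\otimes n})$ of the closed $G$-orbit $X_\lambda = G/P_\lambda$ in its Pl\"ucker-type embedding corresponding to the line bundle $L_\lambda$; the affine cone $\widehat{X_\lambda}$ is irreducible, so its coordinate ring is a domain. Under this identification $B$ becomes multiplication of sections $H^0(L_\lambda) \otimes H^0(L_\lambda) \to H^0(L_\lambda^{\otimes 2})$, which sends pairs of nonzero sections on the irreducible variety $X_\lambda$ to nonzero sections. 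The main technical step will be matching the Cartan product with section multiplication and keeping track of the duality conventions, but once the highest-weight vectors are aligned on both sides, uniqueness of the $G$-equivariant projection $S^2 V_\lambda \to V_{2\lambda}$ up to scalar will pin everything down.
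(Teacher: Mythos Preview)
Your argument is correct. For the first claim the two proofs are essentially the same: you use that a nonempty closed $G$-stable subset of $\PP V_\lambda$ must contain the unique closed orbit $G\cdot[v_\lambda]$, while the paper uses Borel's fixed point theorem to find the $B$-fixed point $[v_\lambda]$ in such a subset; these are two phrasings of the same reduction to the highest-weight line.

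For injectivity the approaches genuinely diverge. The paper argues geometrically: if $\nu_2$ identified two distinct points, the locus of such unordered pairs would be a nonempty closed $B$-stable subset of the Hilbert scheme of two points in $\PP V_\lambda$, hence would contain a $B$-fixed point; analysing this fixed (necessarily non-reduced) length-two scheme supported at $[v_\lambda]$ yields a contradiction via a first-order computation. Your route is algebraic: polarise $\widehat{\nu_2}$ to the symmetric Cartan product $B(v,w)=\pi(v\cdot w)$, reduce the question to $B(v+w,v-w)=0$, and then invoke that the graded ring $\bigoplus_{n\ge 0} V_{n\lambda}$ under the Cartan product is an integral domain, which you obtain from Borel--Weil by identifying it (up to the duality $V_\lambda^*\cong V_{-w_0\lambda}$) with the section ring $\bigoplus_n H^0(G/P_\lambda,L_\lambda^{\otimes n})$ of an ample line bundle on the irreducible projective variety $G/P_\lambda$. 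This is a clean and well-known fact; once the conventions are fixed (which you rightly flag), the identification of multiplication of sections with the Cartan product is forced by $G$-equivariance and the match on highest-weight vectors, so the ``technical step'' is routine. Your approach trades the Hilbert-scheme/fixed-point machinery for a single structural input (integrality of the Cartan ring), which makes the argument shorter but less self-contained; the paper's proof stays entirely within elementary representation theory and projective geometry.
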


We thank J.M.~Landsberg for help with the following proof.

\begin{proof}
    To show that $\nu_2$ is a morphism, we need to show that $\pi(v^2)$ is nonzero whenever $v$ is. Now the set $Q$ of all $[v] \in \PP V_\lambda$ for which $\pi(v^2)$ is zero is closed and $B$-stable. Hence, if $Q \neq \emptyset$, then by Borel's fixed point theorem, $Q$ contains a $B$-fixed point. But the only $B$-fixed point in $\PP V_\lambda$ is $[v_\lambda]$, and $v_\lambda$ is mapped to the nonzero vector $v_{2\lambda}$. Hence $Q=\emptyset$.

    Injectivity is similar but slightly more subtle. Assume that there exist distinct $[v],[w]$ with $\nu_2([v])=\nu_2([w])$. Then $\{[v],[w]\}$ represents a point in the Hilbert scheme of two points in $\PP V_\lambda$. Now the locus $Q$ of points $S$ in said Hilbert scheme such that $\nu_2(S)$ is a single reduced point is a closed subset of a projective scheme, hence $Q$ contains a $B$-stable point $S$. This scheme $S$ cannot consist of two distinct reduced points, since there is only one $B$-stable point. Therefore, the reduced subscheme of $S$ is $\{[v_{\lambda}]\}$, and $S$ represents the point $[v_\lambda]$ together with a nonzero tangent direction in $T_{[v_\lambda]} \PP V_{\lambda} = V_\lambda/K v_\lambda$, represented by $w \in V_\lambda$. Furthermore, $B$-stability of $S$ implies that the $B$-module generated by $w$ equals $\langle w,v_\lambda \rangle_K$. That $S$ lies in $Q$ means that
    \[
        \pi((v_\lambda+\epsilon w)^{2})=v_{2\lambda}  \mod \epsilon^2.
    \]
    We find that $\pi(v_\lambda w)=0$, so that the $G$-module generated by $v_\lambda w \in S^2 V$ does not contain $V_{2\lambda}$. But since $v_\lambda $ is (up to a scalar) fixed by $B$, the $B$-module generated by $v_\lambda w$ equals $v_\lambda $ times the $B$-module gene rated by $w$, and hence contains $v_\lambda^2=v_{2\lambda}$, a contradiction.
\end{proof}

Observe that $\nu_2$ maps the unique closed orbit $G \cdot [v_\lambda]$ in $\PP V_\lambda$ isomorphically to the unique closed orbit $G \cdot [v_{2\lambda}]$---both are isomorphic to $G/P$, where $P \supseteq B$ is the stabiliser of the line $K v_\lambda$ and of the line $K v_{2\lambda}$. In our setting above, where $G=\Spin(V)$ and $\lambda \in \{\lambda_0,\lambda_1\}$, the closed orbit $G \cdot [v_{2\lambda}]$ is one of the two connected components of the Grassmannian $\Gr_{\iso}(V)$ of $n$-dimensional isotropic subspaces of $V$, in its Pl\"ucker embedding; and the closed orbit in the projectivised half-spin representation $\PP V_{\lambda}$ is the same component of the isotropic Grassmannian but now in its spinor embedding.

In what follows we will need a more explicit understanding both of the embedding of the isotropic Grassmannian in the projectivised (half-)spin representations and of the map $\^{\nu_2}$. These are treated in the next two paragraphs.

\subsection{The map \texorpdfstring{$\^{\nu_2}$}{} from the spin representation to the exterior power} \label{ssec:Exterior2}

In \S\ref{ssec:Exterior} we argued the existence of $\Spin(V)$-equivariant quadratic maps from the half-spin representations to the two summands of $\Wedge^n V$. In \cite{manivel2009spinor} these two maps are described jointly as 
\[ 
    \^{\nu_2}:\Cl(V)f \to \Wedge^n V, \quad af \mapsto \text{ the component in $\Wedge^n V$ of } (afa^*) \bullet 1 \in \Wedge V, 
\]
where $\bullet$ stands for the $\Cl(V)$-module structure of $\Wedge V$ from \S\ref{ssec:Grassmann} and $a^*$ refers to the anti-automorphism of the Clifford algebra from \S\ref{ssec:Clifford}.

\begin{lemma}\label{ex:Cartan}
     The map $\hat{\nu}_2$ maps the isotropic Grassmann cone in its
     spinor embedding to the isotropic Grassmann cone in its Pl\"ucker embedding, i.e., 
     \[
        \hat{\nu}_2\big( \widehat{\Gr}_{\iso}(V)\big) = \widehat{\Gr}^{\operatorname{Pl}}_{\iso}(V),
    \]
    where $\widehat{\Gr}^{\operatorname{Pl}}_{\iso}(V)$ is the isotropic Grassmann cone in its Pl\"ucker embedding (see \cite[Definition 3.7]{ST21}).
\end{lemma}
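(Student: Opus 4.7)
The plan is to exploit $\Spin(V)$-equivariance of $\hat{\nu}_2$ and reduce the statement to a check on the highest weight vectors of the two irreducible components of the spin representation. Since the map $af \mapsto afa^*$ and the $\Cl(V)$-action on $\Wedge V$ are both governed by conjugation in $\Cl(V)$, the map $\hat{\nu}_2$ is $\Spin(V)$-equivariant and homogeneous of degree two. By the standard orbit description recalled in the proof of \cref{prop:properties}(1) (see \cite[Theorem 1, p.\,428]{Pro07}), each component $\widehat{\Gr}^\pm_{\iso}(V)$ is the affine cone over the unique closed $\Spin(V)$-orbit in the projectivised half-spin representation, namely the orbit of the line spanned by $\omega_0 = e_1 \cdots e_n f$, respectively $\omega_1 = e_1 \cdots e_{n-1} f$; the analogous statement holds for $\widehat{\Gr}^{\operatorname{Pl}}_{\iso}(V) \subseteq \Wedge^n V$, where the two orbit representatives are the Plücker coordinates $e_1 \wedge \cdots \wedge e_n$ and $e_1 \wedge \cdots \wedge e_{n-1} \wedge f_n$ of $E$ and $\lspan{e_1,\ldots,e_{n-1},f_n}$.

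The computational core is a direct evaluation of $\hat{\nu}_2$ on the highest weight vectors via the Clifford algebra. Writing $A_n := e_1 \cdots e_n \cdot f_1 \cdots f_n \cdot e_n \cdots e_1 \in \Cl(V)$, I would induct on $n$: expand $f_n e_n = 2 - e_n f_n$ and use the anticommutation of $e_n$ with $f_1,\ldots,f_{n-1}$ and of $f_n$ with $e_1,\ldots,e_{n-1}$, together with $e_n^2 = 0$, to see that the term produced by $e_n f_n$ is annihilated by the leading factor $e_n$ in $e_1 \cdots e_n$, leaving the clean recursion $A_n = 2\, A_{n-1}\, e_n$ and hence $A_n = 2^n e_1 \cdots e_n$. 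Since the isomorphism $\Cl(V) \to \Wedge V$, $a \mapsto a \bullet 1$, agrees with the wedge product on the isotropic subalgebra $\Cl(E)$, this yields
\[
    \hat{\nu}_2(\omega_0) = A_n \bullet 1 = 2^n\, e_1 \wedge \cdots \wedge e_n,
\]
the Plücker coordinate of $E$. An analogous computation, in which one commutes $f_n$ past $e_{n-1} \cdots e_1$ and then invokes the same formula for $A_{n-1}$, gives $\hat{\nu}_2(\omega_1) = (-1)^{n-1} 2^{n-1}\, e_1 \wedge \cdots \wedge e_{n-1} \wedge f_n$, the Plücker coordinate of $\lspan{e_1,\ldots,e_{n-1},f_n}$ up to a sign.

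By equivariance the orbit $\Spin(V) \cdot \omega_i$ is then mapped to a nonzero scalar multiple of the corresponding Plücker orbit. Since $\hat{\nu}_2$ is homogeneous of degree two and $K$ is algebraically closed, the scaling $K\omega_i \to K \hat{\nu}_2(\omega_i)$ is surjective, so $\hat{\nu}_2$ maps the affine cone over each spinor orbit onto the affine cone over the corresponding Plücker orbit; taking the union over the two components proves the desired equality. The main obstacle is the inductive Clifford calculation; its elegance relies on the single cancellation $e_n^2 = 0$ killing the quadratic term in $f_n$ after one anticommutation, after which the recursion $A_n = 2\, A_{n-1}\, e_n$ is immediate and the rest of the argument is formal, using only equivariance and the orbit-closure description of both embeddings.
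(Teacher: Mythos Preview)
Your proof is correct and the Clifford computation is essentially the same inductive step that the paper carries out; the recursion $A_n = 2\,A_{n-1}\,e_n$ is exactly the paper's manipulation $e_{k+1}\cdots e_n f_1\cdots f_n e_n\cdots e_{k+1} = 2\,e_{k+1}\cdots e_n f_1\cdots f_{n-1} e_{n-1}\cdots e_{k+1}$, specialised to $k=0$.

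The difference lies in how the two proofs pass from this computation to the full statement. The paper computes $\hat{\nu}_2(\omega_H)$ directly for \emph{every} maximal isotropic $H$: it chooses a hyperbolic basis adapted to $H$ and $F$, writes $\omega_H = e_{k+1}\cdots e_n f$ where $k = \dim(H\cap F)$, and shows by the same induction that $\hat{\nu}_2(\omega_H) = 2^{n-k}\,e_{k+1}\wedge\cdots\wedge e_n \wedge f_1\wedge\cdots\wedge f_k$, the Pl\"ucker coordinate of $H$. Your route instead computes only at the two highest weight vectors and then invokes $\Spin(V)$-equivariance together with the orbit description of both cones. Your argument is a bit more conceptual and avoids the general-$k$ computation; the paper's is more self-contained, since it does not need to cite the orbit structure of the Pl\"ucker cone or argue surjectivity via squares in $K$. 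Either way the computational heart is identical.
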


\begin{proof}
    Let $H \subseteq V$ be a maximal isotropic subspace that
    intersects $F$ in a $k$-dimensional space. Choose a hyperbolic
    basis $e_1,\ldots,e_n,f_1,\ldots,f_n$ adapted to $H$ and $F$, so
    that $H=\lspan{e_{k+1},\ldots,e_n,f_1,\ldots,f_k}$ is represented
    by the vector $\omega_H:= e_{k+1} \cdots e_n f 
    \in \widehat{\Gr}_{\iso}(V)$ 
    where $f=f_1 \cdots f_n$; see \S\ref{ssec:igc}. Set $a:=e_{k+1}
    \cdots e_n$. 
    Now
    \begin{align*} 
        afa^*&=e_{k+1} \cdots e_n f_1 \cdots f_n e_n \cdots e_{k+1}\\
        &=e_{k+1} \cdots e_n f_1 \cdots f_{n-1} (2-e_n f_n) e_{n-1} \cdots e_{k+1} \\
        &=2 e_{k+1} \cdots e_n f_1 \cdots f_{n-1} e_{n-1} \cdots e_{k+1} \\
        &=\ldots\\
        &=2^{n-k} e_{k+1} \cdots e_n f_1 \cdots f_k 
    \end{align*}
    where we have used the definition of $\Cl(V)$ (in the first step), the fact that the second copy of $e_n$ is perpendicular to all elements before it and multiplies to zero with the first copy of $e_n$ (in the second step), and have repeated this another $n-k-1$ times in the last step. We now find that 
    \[ 
        (afa^*) \bullet 1=2^{n-k} e_{k+1} \wedge \cdots \wedge e_n \wedge f_1 \wedge \cdots \wedge f_k, 
    \]
    so that $(afa^*) \bullet 1$ lies in one of the two summands of
    $\Wedge^n V$ and spans the line representing the space $H$ in the
    Pl\"ucker embedding. This shows that $\hat{\nu_2}$ maps the
    isotropic Grassmann cone in its spinor embedding to the isotropic
    Grassmann cone in its Pl\"ucker embedding, as desired.
\end{proof}

\begin{remark}
    While the spin representation $\Cl(V)f$ depends only on the space
    $F$---since $F$ determines $f$ up to a scalar, which doesn't
    affect the left ideal $\Cl(V)f$---the map $\^{\nu_2}$ actually
    depends on $f$ itself: for $\tilde{f}:=t f$ with $t \in K^*$, the
    map $\^{\nu_2}$ constructed from $\tilde{f}$ sends $af= (t^{-1}a)\tilde{f}$ to $t^{-1}a \tilde{f} t^{-1}a^*=t^{-1}a f a^*$, so the new $\^{\nu_2}$ is $t^{-1}$ times the old map. 
\end{remark}

\subsection{Contraction and the Cartan map commute}
Recall from \S\ref{ssec:Exterior} that we have quadratic maps $\^{\nu_2}$ from the half-spin representations to the two summands of $\Wedge^n V$; together, these form a quadratic map $\^{\nu_2}$ which we discussed in \S\ref{ssec:Exterior2}. By abuse of terminology, we call this, too, the Cartan map. Given an isotropic vector $e \in V$ that is not in $F$, we write $\^{\nu_2}$ also for the Cartan map $\Cl(V_e)\ol{f} \to \Wedge^{n-1} V_e$ (notation as in \S\ref{ssec:Contraction}). Recall from \S\ref{ssec:Contraction} the contraction map $c_e: \Wedge^n V \to \Wedge^{n-1} V_e$ and its spin analogue $\pi_e: \Cl(V)f \to \Cl(V_e)\ol{f}$. Also, for a fixed $h=f_n \in F$ with $\langle e,h \rangle=1$, recall from \S\ref{ssec:Multiplication} the multiplication map
$m_h:\Wedge^{n-1} V_e \to \Wedge^n V$ and its spin analogue
$\tau_h:\Cl(V_e) \ol{f} \to \Cl(V) f$. The relations between these
maps are as follows. 

\begin{proposition} \label{prop:CartanContraction}
    The following diagrams essentially commute:
    \begin{equation} \label{eq:FinitePluecker}
        \xymatrix{
        \Cl(V)f \ar[d]_{\^{\nu_2}} \ar[r]^-{\pi_e} & \Cl(V_e) \ol{f} \ar[d]^{\^{\nu_2}} \\
        \Wedge^n V \ar[r]_-{c_e} & \Wedge^{n-1} V_e}
	\text{ and }
	\xymatrix{
	\Cl(V_e)\ol{f} \ar[d]_{\^{\nu_2}} \ar[r]^-{\tau_h} & \Cl(V)f \ar[d]^{\^{\nu_2}} \\
	\Wedge^{n-1} V_e \ar[r]_-{m_h} & \Wedge^n V.
	}
    \end{equation}
    More precisely, one can rescale the restrictions of $c_e$ to the
    two $\so(V)$-submodules of $\Wedge^n V$ each by $\pm 1$ in such a
    manner that the diagram commutes, and similarly for $m_h$. 
\end{proposition}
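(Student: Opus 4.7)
The plan is to verify both diagrams by combining $\Spin(V_e)$-equivariance with explicit computations in the $\Wedge E$-model of the spin representation, using a hyperbolic basis $e_1,\ldots,e_n,f_1,\ldots,f_n$ of $V$ adapted so that $e_n=e$ and $f_n=h$. Every map appearing is $\Spin(V_e)$-equivariant: $\widehat{\nu_2}$ by restriction of its $\Spin(V)$-equivariance, $\pi_e$ and $\tau_h$ by the corollary to \cref{prop:tauhom}, while $c_e$ and $m_h$ are manifestly so. Both compositions in each diagram are therefore $\Spin(V_e)$-equivariant quadratic maps into $\Wedge^{n-1}V_e$, which splits as $V_{2\lambda'_0}\oplus V_{2\lambda'_1}$ under $\so(V_e)$. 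Decomposing the source as a direct sum of its irreducible $\so(V_e)$-summands and invoking the Cartan multiplicity $\dim\Hom_{\so(V_e)}(S^2 V_{\lambda'_i},V_{2\lambda'_j}) = \delta_{ij}$ reduces the verification to a finite number of explicit scalar comparisons; the freedom in these scalars is precisely the $\pm 1$ rescaling of $c_e$ and $m_h$ on each of the two $\so(V)$-summands of $\Wedge^n V$ allowed in the statement.

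For the second (multiplication) diagram the computation is nearly direct. For $a\in\Cl(V_e)$ we have $\tau_h(a\overline{f}) = a\overline{f}f_n = af$, and since all vectors appearing in $a$ and $a^*$ are orthogonal to $h=f_n$, the $\mathbb{Z}/2$-graded anticommutation gives $afa^* = (-1)^{|a^*|}(a\overline{f}a^*)\cdot h$. Acting on $1\in\Wedge V$ yields $(afa^*)\bullet 1 = (-1)^{|a^*|}(a\overline{f}a^*)\bullet h$, and as every vector appearing in $a\overline{f}a^*$ is orthogonal to $h$, the interior-product contributions $\iota(\cdot)h$ all vanish, so only outer products contribute in top degree. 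The $\Wedge^n V$-component therefore equals $\pm\, h\wedge[(a\overline{f}a^*)\bullet 1]_{\Wedge^{n-1}V_e}$, which matches $m_h\circ\widehat{\nu_2}(a\overline{f})$ up to a sign constant on each half-spin summand.

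For the first (contraction) diagram decompose $\Wedge^+ E = U_0\oplus U_1$ as $\so(V_e)$-modules, where $U_0 = \Wedge^+ E_{n-1}$ and $U_1 = e_n\wedge\Wedge^- E_{n-1}$ are the two (irreducible) half-spin representations of $\so(V_e)$, and $\pi_e$ is projection onto $U_0$. By the standard decomposition rule for products of half-spin representations of $\so(2(n-1))$, the tensor $U_0\otimes U_1$ decomposes into odd-degree exterior powers of $V_e$ only and so contains neither $V_{2\lambda'_0}$ nor $V_{2\lambda'_1}$; hence the cross bilinear term $B(\omega_0,\omega_1) := \widehat{\nu_2}(\omega_0+\omega_1) - \widehat{\nu_2}(\omega_0) - \widehat{\nu_2}(\omega_1)$ is automatically killed by $c_e$. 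It remains to compare the two sides on $U_0$ and $U_1$ separately. On $U_0$, taking $\omega_0 = 1$ and using \cref{ex:Cartan} gives $\widehat{\nu_2}(1) = f_1\wedge\cdots\wedge f_n$ and $c_e(f_1\wedge\cdots\wedge f_n) = (-1)^{n-1}\overline{f}_1\wedge\cdots\wedge\overline{f}_{n-1} = (-1)^{n-1}\widehat{\nu_2}(\pi_e(1))$, giving commutativity up to the sign $(-1)^{n-1}$. On $U_1$, a polarisation computation using \cref{ex:Cartan} shows that $\widehat{\nu_2}(\omega_1) \in \Wedge^n V$ always contains $e_n$ as a wedge factor for any $\omega_1 \in U_1$, and $c_e$ annihilates any wedge product containing $e$; this matches $\widehat{\nu_2}(\pi_e(\omega_1)) = 0$.

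The main obstacle will be the careful bookkeeping of signs arising from the anti-automorphism $a\mapsto a^*$, from the anticommutations in $\Cl(V)$ and $\Wedge V$, and from the position of $f_n$ inside the wedge products entering $c_e$. These sign contributions are exactly what force the $\pm$-rescaling of $c_e$ and $m_h$ on the two $\so(V)$-summands of $\Wedge^n V$ stated in the proposition; the equivariance and Cartan-multiplicity reductions keep the number of explicit coefficient computations needed to pin them down to a small finite handful.
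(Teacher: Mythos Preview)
Your approach is correct but genuinely different from the paper's. The paper proceeds by direct computation: it takes a general element $a=\sum_I c_I e_I$, expands $(afa^*)\bullet 1$ in the basis, splits into the two parity classes of $|I|,|J|$, and then moves $e=e_n$ through the Clifford word to compare the $\Wedge^{n-1}V_e$-component of $c_e(\widehat{\nu_2}(af))$ with $\widehat{\nu_2}(\pi_e(af))$, obtaining the explicit signs $(-1)^{n-1}$ on $\Cl^+(V)f$ and $(-1)^n$ on $\Cl^-(V)f$. The second diagram is handled by an analogous (shorter) direct calculation.

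Your route replaces most of this by structure: $\Spin(V_e)$-equivariance of all four maps, the branching $\Cl^+(V)f \cong U_0\oplus U_1$ into the two half-spin representations of $\so(V_e)$, and the tensor-product decompositions $S^{\pm}\otimes S^{\pm}$ and $S^+\otimes S^-$ for type $D_{n-1}$, which together force $\dim\Hom_{\so(V_e)}(S^2 U_i,V_{2\lambda'_j})=\delta_{ij}$ and $\Hom_{\so(V_e)}(U_0\otimes U_1,\Wedge^{n-1}V_e)=0$. This reduces everything to one scalar check per summand. What your approach buys is a clean conceptual explanation of \emph{why} commutativity can only fail by a scalar on each half; what the paper's approach buys is self-containment (no appeal to the $S^+\otimes S^-$ decomposition) and the explicit sign factors without further work.

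Two small points worth tightening. First, your claim that $\widehat{\nu_2}(\omega_1)$ lies in $e_n\wedge\Wedge^{n-1}V$ for \emph{every} $\omega_1\in U_1$ is actually true but does not follow from \cref{ex:Cartan} plus polarisation alone; it requires the Clifford computation $e_n f e_n = 2(-1)^{n-1}\overline{f}\,e_n$ together with the observation that all remaining factors are orthogonal to $e_n$. Alternatively, since your Schur argument reduces the $U_1$-check to a single highest-weight vector, invoking \cref{ex:Cartan} on $e_1\wedge\cdots\wedge e_n$ (or the appropriate highest-weight element) is enough, and you could phrase it that way. Second, you treat only $\Cl^+(V)f$; the statement concerns both half-spin summands, and the signs on $\Cl^-(V)f$ genuinely differ from those on $\Cl^+(V)f$, so a sentence noting that the identical argument on the other half yields the other sign would make the proof complete.
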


Naturally, we could have chosen the scalars in the definition of $c_e$
(or, using a square root of $-1$, in that of $\pi_e$) such that the
diagram literally commutes. However, we have chosen the scalars such
that $c_e$ has the most natural formula and $\pi_e,\tau_h$ have the
most natural formulas in our model $\Wedge E$ for the spin representation.

\begin{proof}
    We may choose a hyperbolic basis $e_1,\ldots,e_n,f_1,\ldots,f_n$ of $V$ such that $e=e_n$ and $f_1,\ldots,f_n$ is a basis of $F$. We write $f:=f_1 \cdots f_n$ and $\ol{f}:=\ol{f}_1 \cdots \ol{f}_{n-1}$.

    Since the vertical maps are quadratic, it is not sufficient to show commutativity on a spanning set. We therefore consider 
    \[ 
       a:=\sum_{I \subseteq [n]} c_I e_I 
    \]
    where, for $I=\{i_1<\ldots<i_k\}$ we write $e_I:=e_{i_1} \cdots e_{i_k}$. We then have
    \[
        \pi_e(af)=\sum_{I: n \not \in I} c_I \ol{e_I}\ol{f}=:\ol{a}\ol{f} 
    \]
    and 
    \[ 
        \^{\nu_2}(\ol{a}\ol{f})=\text{the component in $\Wedge^{n-1} V_e$ of} \sum_{I,J: n \not \in I \cup J} (c_I c_J \ol{e_I} \ol{f} \ol{e_J}^*) \bullet 1 \in \Wedge V_e.
    \] 
    Now note that, since $\ol{f}$ has $n-1$ factors, if $I,J$ do not have the same parity, then acting with $\ol{e_I} \ol{f} \ol{e_J}^*$ on $1$ yields a zero contribution in $\Wedge^{n-1} V_e$. Hence the sum above may be split into two sums, one of which is
    \begin{equation}\label{eq:OneWay}
        \text{the component in $\Wedge^{n-1} V_e$ of} \sum_{I,J:
	|I|,|J| \text{ even, } n \not \in I \cup J} (c_I c_J \ol{e_I} \ol{f} \ol{e_J}^*) \bullet 1.
    \end{equation}

    On the other hand, consider
    \[ 
        \^{\nu_2}(af)=\text{the component in $\Wedge^n V$ of} \sum_{I,J} (c_I c_J e_I f e_J^*) \bullet 1 \in \Wedge V.
    \]
    For the same reason as above, this splits into two sums, and we want to compare the following expression to \eqref{eq:OneWay}:
    \begin{equation} \label{eq:OtherWay}
        c_e(\text{the component in $\Wedge^n V$ of } \sum_{I,J:|I|,|J|
	\text{ even}} (c_I c_J e_I f e_J^*) \bullet 1).
    \end{equation}
    Now recall that the action of $e=e_n \in V \subseteq \Cl(V)$ on $\Wedge V$ is via $o(e)+\iota(e)$, while $c_e$ is $\iota_e$ followed by projection to $\Wedge^{n-1} V_e$. Hence to compute \eqref{eq:OtherWay}, we may as well compute the summands of 
    \[ 
        \text{the component in $\Wedge^n V$ of }
	\sum_{I,J:|I|,|J|\text{ even}} (c_I c_J \cdot e \cdot e_I f e_J^*) \bullet 1 
    \]
    that do not contain a factor $e$. Terms with $n \in I$ do not contribute, because then $e e_I=0$. Terms with $n \not \in I$ but $n \in J$ do not contribute because when $e$ gets contracted with $f_n$ a factor $e$ in $e_J^*$ survives, and when $e$ does not get contracted with $f_n$, we use $ee_J^*=0$. So we may restrict attention to the terms with $n \not \in I \cup J$. Let $I,J$ correspond to such a term, that is, $|I|,|J|$ are even and $n \not \in I \cup J$. Write $I=\{i_1<\ldots<i_k\}$ and $J=\{j_1<\ldots<j_l\}$. Then 
    \begin{align*} 
        (e e_I f e_J^*) \bullet 1
        &=((-1)^{n-1} e_I f_1 \cdots f_{n-1} e f_n e_J^*) \bullet 1 \\
        &=((-1)^{n-1} e_I f_1 \cdots f_{n-1} e) \bullet (f_n \wedge e_{j_l} \wedge \cdots \wedge e_{j_1}) \\
        &=((-1)^{n-1} e_I f_1 \cdots f_{n-1}) \bullet (e_{j_l} \wedge \cdots \wedge e_{j_1} + e \wedge f_n \wedge e_{j_l} \wedge \cdots \wedge e_{j_1}). 
    \end{align*}
    The second term in the last expression will contribute only terms with a factor $e$ to the final result, and the former term contributes
    \[
        \text{the component in $\Wedge^{n-1} V_e$ of } (-1)^{n-1} (\ol{e_I} \ol{f} \ol{e_J}^*) \bullet 1. 
    \]
    Comparing this with \eqref{eq:OneWay}, we see that the diagram commutes on terms in $\Cl^+(V)f$ up to the factor $(-1)^{n-1}$. A similar computation shows that it commutes on terms in $\Cl^-(V)f$ up to a factor factor $(-1)^n$.

We now consider the second diagram, where $V$ is split as the orthogonal
direct sum $V_e \oplus \langle e,h \rangle$ with $e=e_n,h=f_n$. Consider
$a \in \Cl(\lspan{e_1,\ldots,e_{n-1}})$. By the same argument as above, it suffices to consider the
case where all summands of $a$ in the basis $e_I$ have indices $I$ with
$|I|$ of the same parity, say even. Then $\^\nu_2 \circ \tau_h$ in the diagram sends $a\ol{f}$ to the component in $\Wedge^n V$ of $afa^*
\bullet 1$. Since the summands $e_I$ in $a$ all have $n \not \in I$,
in $a f a^* \bullet 1$ all summands have a factor $f_n$, and indeed
\[ (a f a^*) \bullet 1 = f_n \wedge (a \ol{f} a^* \bullet 1) \]
(when all terms in $a$ have $|I|$ odd, we get a minus sign). The
component in $\Wedge^n V$ of this expression is the same as the one obtained via $m_h \circ \^\nu_2$.
\end{proof}

\subsection{Proof of \texorpdfstring{\Cref{cor:isotropic-cartan}}{Theorem 6.1}}
\label{subsec:lastproof}

In this section we use the Cartan map to prove \cref{cor:isotropic-cartan}, and finish the proof of \cref{cor:finite_to_finite_spinvariety} via a similar argument.

\begin{proof}[Proof of Theorem~\ref{cor:isotropic-cartan}] 
    For a quadratic space of dimension $2n$, denote by $\widehat{\Gr}^{\operatorname{Pl}}_{\iso}(V) \subseteq \Wedge^n V$ the isotropic Grassmann cone over the Pl\"ucker embedding. Given a maximal isotropic subspace $F \subseteq V$ with basis $f_1,\ldots,f_n$ and $f:=f_1 \cdots f_n$, let $\hat{\nu}_2:\Cl^+(V)f \to \Wedge^n V$ be the Cartan map defined in \S\ref{ssec:Exterior2}. For any isotropic $v \in V \setminus F$ the diagram
    \[
    \begin{tikzcd}
    \Wedge^nV \arrow{r}{c_v}  & \Wedge^{n-1}V_v \\
    \Cl(V)f \arrow{r}{\pi_v} \arrow{u}{\hat{\nu}_2} & \Cl(V_v)\ol{f} \arrow[swap]{u}{\hat{\nu}_2}
  \end{tikzcd}
    \]
    commutes up to scalar factor at the bottom by \Cref{prop:CartanContraction}, where $V_v:=v^\perp/\lspan{v}$ and where $\ol{f}$ is the image of a product of a basis of $v^\perp \cap F_n$.

    The proof of \cite[Corollary 4.2]{ST21} shows that for $\omega \in \Wedge^n V$ the following are equivalent:
    \begin{enumerate}
        \item $\omega \in \widehat{\Gr}^{\rm Pl}_{\iso}(V)$;
        \item For every sequence of isotropic vectors $v_1 \in V$, $v_2 \in V_{v_1}$, $v_3 \in (V_{v_1})_{v_2}, \dots, v_{n-4} \in (\cdots((V_{v_1})_{v_2})_{v_3}\cdots)_{v_{n-3}}$ it holds 
        \[
            C(\omega) \in \widehat{\Gr}^{\rm Pl}_{\iso}(W),
        \]
        where we abbreviate
        $W:=(\cdots((V_{v_1})_{v_2})_{v_3}\cdots)_{v_{n-4}}$ and
        $C:\Wedge^n V \to \Wedge^{4}W$ is the composition $C:= c_{v_{n-4}}
        \circ \cdots \circ c_{v_1}$ of the contraction maps $c_{v_i}$
        introduced in \Cref{ssec:Contraction}. 
    \end{enumerate}
    By slight abuse of notation, we also write $v_1,\ldots,v_{n-4}$ for preimages of these vectors in $V$. These span an $(n-4)$-dimensional isotropic subspace $U$ of $V$ (provided that each $v_i$ chosen above in the successive quotients is nonzero), and $W$ equals $U^\perp / U$. For any fixed $\omega$, the condition that $C(\omega)$ lies in $\widehat{\Gr}^{\rm Pl}_{\iso}(W)$ is a closed condition on $U$, and hence it suffices to check that condition for $U$ in a dense subset of the Grassmannian of isotropic $(n-4)$-dimensional subspaces of $V$. In particular, it suffices to check this when $U \cap F_n=\{0\}$.

    Fix $n \geq 4$ and $x \in \Cl(V_n)f_1 \cdots f_n$ such that $p(g \cdot x)=0$ for all $g \in {\Spin}(V_n)$ and all $p \in I_4$. This means precisely that $\pi_{n,4}(g\cdot x) \in \widehat{\Gr}^{+}_{\iso}(V_4)$ for all $g \in {\Spin}(V_n)$. We need to show that $x \in \widehat{\Gr}^{+}_{\iso}(V_n)$. To this end, consider $\omega:=\hat{\nu}_2(x) \in \Wedge^nV_n$. It suffices to show that $\omega \in \widehat{\Gr}^{\operatorname{Pl}}_{\iso}(V_n)$. Indeed, this follows from the fact that $\hat{\nu}_2\big( \widehat{\Gr}^{+}_{\iso}(V)\big)$ is one of the two irreducible components of $\widehat{\Gr}^{\operatorname{Pl}}_{\iso}(V)$ (see \Cref{ex:Cartan}) and because $\nu_2$ is an injective morphism by \Cref{lm:CartanInjective}. Let $v_1,v_2,\ldots,v_{n-4} \in V_n$ as above: linearly independent, and such that the span $U:=\lspan{v_1,\ldots,v_{n-4}}$ is an isotropic space that intersects $F_n$ trivially. Let $C:= c_{v_{n-4}} \circ \cdots \circ c_{v_1}$ be the composition of the associated contractions. We need to show that $ C(\omega) \in \widehat{\Gr}^{\operatorname{Pl}}_{\iso}(W)$, where $W:=U^\perp/U$.

    Now $\hat{\nu}_2\big( \widehat{\Gr}^{+}_{\iso}(W)\big) \subseteq \widehat{\Gr}^{\rm Pl}_{\iso}(W)$ by \Cref{ex:Cartan}, and the diagram
    \[
        \begin{tikzcd}[column sep=large]
        \Wedge^nV_n \arrow{rr}{C}  & &\Wedge^{4}W \\
        \Cl(V_n) f \arrow{rr}{\pi_{v_{n-4}} \circ \cdots \circ \pi_{v_1}}
        \arrow{u}{\hat{\nu}_2} & & \
        \Cl(W) \ol{f}, \arrow[swap]{u}{\hat{\nu}_2}
    \end{tikzcd}
    \]
    where $\ol{f}$ is the image of the product of a basis of $U^\perp \cap F_n$, commutes up to a scalar factor in the bottom map due to \Cref{prop:CartanContraction}. Hence it suffices to check that $\pi_{v_{n-4}} \circ \cdots \circ \pi_{v_1}(x) \in \widehat{\Gr}^{+}_{\iso}(W)$. Now there exists an element $g \in \Spin(V_n)$ that maps $F_n$ into itself (not with the identity!) and sends $v_i$ to $e_{n+1-i}$ for $i=1,\ldots,n-4$. This induces an isometry $W:=U^\perp / U \to (U')^\perp / U'= V_4 = \lspan{ e_1,\ldots,e_4,f_1,\ldots,f_4}$, where $U':=\lspan{e_5,\ldots,e_n}$. This in turn induces a linear isomorphism (unique up to a scalar) $\Cl(W)\cdot\ol{f} \to \Cl(V_4)\cdot f_1 \cdots f_4$ (where $f$ on the left is the product of a basis of $F_n \cap U^\perp$) that maps $\widehat{\Gr}^+_{\iso}(W)$ onto $\widehat{\Gr}^+_{\iso}(V_4)$. Since, by assumption, $\pi_{n,4}(g \cdot x)=\pi_{e_5} \circ \cdots \circ \pi_{e_n}(g \cdot x)$ lies in the latter isotropic Grassmann cone, $\pi_{v_{n-4}} \circ \cdots \circ \pi_{v_1}(x)$ lies in the former. 
\end{proof}
 
\begin{lemma} \label{lm:Lower}
Let $q \geq n \geq n_0$. Then for all $g$ in some open dense subset of
$\Spin(V_q)$ there exist $g' \in \Spin(V_n)$ and $g''
\in \Spin(V_{n_0})$ such that
\[ \pi_{q,n_0} \circ g \circ \tau_{n,q} = g'' \circ 
\pi_{n,n_0} \circ g' \]
holds up to a scalar factor.
\end{lemma}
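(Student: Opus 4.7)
My plan is to use the Cartan map of \S\ref{ssec:Exterior2} to translate the asserted identity into an analogous one between linear maps of exterior power representations, where a parabolic factorisation argument applies. By iterating \Cref{prop:CartanContraction}, one obtains commuting diagrams, up to nonzero scalar factors,
\[
    \hat{\nu}_2 \circ \pi_{q,n_0} = c_{q,n_0} \circ \hat{\nu}_2
    \quad \text{and} \quad
    \hat{\nu}_2 \circ \tau_{n,q} = m_{n,q} \circ \hat{\nu}_2,
\]
where $c_{q,n_0}: \Wedge^q V_q \to \Wedge^{n_0} V_{n_0}$ is iterated contraction with $e_{n_0+1},\dots,e_q$ and $m_{n,q}: \Wedge^n V_n \to \Wedge^q V_q$ is iterated wedge multiplication with $f_{n+1}\wedge\cdots\wedge f_q$. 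Since $\hat{\nu}_2$ is $\Spin$-equivariant (with action factoring through $\SO$) and injective away from zero by \Cref{lm:CartanInjective}, it will suffice to produce, for $\bar g$ in a dense open subset of $\SO(V_q)$, elements $\bar g' \in \SO(V_n)$ and $\bar g'' \in \SO(V_{n_0})$ with
\[
    c_{q,n_0} \circ \bar g \circ m_{n,q} = \bar g'' \circ c_{n,n_0} \circ \bar g'
\]
as linear maps $\Wedge^n V_n \to \Wedge^{n_0} V_{n_0}$, up to a nonzero scalar; these will then lift through the $2{:}1$ covers $\Spin(V_n)\to \SO(V_n)$ and $\Spin(V_{n_0})\to \SO(V_{n_0})$ to give the required $g'$ and $g''$.

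To establish the exterior power identity, I would use the parabolic $P \subseteq \SO(V_q)$ stabilising the isotropic subspace $F' := \lspan{f_{n+1},\dots,f_q}$, whose Levi is $L = \GL(F') \times \SO(V_n)$ under the canonical identification $(F')^\perp / F' \cong V_n$. On the open Bruhat cell, a generic $\bar g$ factors as $\bar g = u^- \ell u^+$ with $u^+ \in U_P$ and $u^- \in U_{P^-}$. The point is that (i) $u^+$ stabilises $F'$ and acts trivially on $(F')^\perp / F'$, so $u^+ m_{n,q}(\omega) = m_{n,q}(\omega)$ for every $\omega \in \Wedge^n V_n$ because the ``correction'' terms $u^+$ introduces into $\omega$ land in $F'$ and therefore vanish after wedging with the top-degree element $f_{n+1}\wedge \cdots \wedge f_q$ of $\Wedge F'$; (ii) the Levi factor $\ell = (\lambda,\tilde g)$ with $\lambda \in \GL(F')$ and $\tilde g \in \SO(V_n)$ acts as $\ell m_{n,q}(\omega) = \det(\lambda)\cdot \tilde g(\omega) \wedge f_{n+1}\wedge\cdots\wedge f_q$; and (iii) $u^-$ is unipotent and fixes the complementary isotropic subspace $E' := \lspan{e_{n+1},\dots,e_q}$ pointwise, so its effect on the $f_i$ with $i>n$ is by additions of vectors in $E'$, which are precisely absorbed by the subsequent contractions $c_{e_{n+1}},\dots,c_{e_q}$. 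Applying an entirely analogous second-stage parabolic factorisation at the level $V_n \supset V_{n_0}$ to the $\SO(V_n)$-part $\tilde g$ together with the residual contribution of $u^-$ on $V_{n_0}$ then produces the desired $\bar g'$ and $\bar g''$.

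The main obstacle I expect is the block-by-block bookkeeping of how the off-diagonal unipotent contributions interact with the contraction and wedge operations — in particular, verifying explicitly that the residual effect of $u^-$ after the first-stage contraction is genuinely captured by an element of $\SO(V_{n_0})$ via the second-stage parabolic argument, and that the two stages are compatible. Once this computation is done, lifting $\bar g', \bar g''$ to $\Spin$ is immediate from the $2{:}1$ cover, and the resulting $\pm 1$ sign ambiguity (together with the various scalars picked up from \Cref{prop:CartanContraction} and from the $\det(\lambda)$ above) is absorbed into the scalar factor permitted in the statement.
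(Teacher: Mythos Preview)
Your reduction via the Cartan map to an identity of linear maps on exterior powers is exactly the paper's first step. From there, however, the paper argues geometrically: it sets $E''=g^{-1}\langle e_{n_0+1},\ldots,e_q\rangle$, projects $E''\cap(V_n\oplus F)$ along $F$ to obtain an isotropic $(n-n_0)$-space $\widetilde{E}\subseteq V_n$, chooses $g'\in\SO(V_n)$ with $g'\widetilde{E}=\langle e_{n_0+1},\ldots,e_n\rangle$, and builds $g''$ from the induced isometries of the $2n_0$-dimensional quotients. Your parabolic/Bruhat approach is a genuinely different and arguably slicker route.

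Two comments. First, claim~(iii) is not quite right as stated: for $u^-\in U_{P^-}$ one has $u^-f_i-f_i\in\langle e_{n+1},\ldots,e_q\rangle^\perp=V_n\oplus\langle e_{n+1},\ldots,e_q\rangle$, not merely in $\langle e_{n+1},\ldots,e_q\rangle$. This does not break the argument: since every $\xi\in\langle e_{n+1},\ldots,e_q\rangle^\perp$ satisfies $(e_j\mid\xi)=0$ for $j>n$, the factor $f_j+\xi_j$ is still the unique one pairing nontrivially with $e_j$, so the contraction $c_{e_j}$ removes it entirely and then kills the $e_j$-component of everything else.

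Second, and more importantly, your second stage is unnecessary. Once you correct~(iii) you find that $u^-$ leaves no residue at all: after the contractions $c_{e_{n+1}},\ldots,c_{e_q}$ and the projection modulo $\langle e_{n+1},\ldots,e_q\rangle$, the $V_n$-additions that $u^-$ introduces into the $f_i$ disappear with those factors, and the $\langle e_{n+1},\ldots,e_q\rangle$-additions to the $V_n$-factors die in the quotient. Hence
\[
c_{q,n_0}\circ g\circ m_{n,q}=\pm\det(\lambda)\cdot c_{n,n_0}\circ\tilde g
\]
for $g=u^-(\lambda,\tilde g)u^+$ in the open cell $U_{P^-}LU_P$, so one may take $g'=\tilde g$ and $g''=\operatorname{id}$. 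The paper's construction produces a typically nontrivial $g''$, but the lemma only asks for existence. Your argument thus yields a shorter proof; what the paper's geometric version buys is a more explicit description of how the factorisation depends on $g$, which connects transparently to the analogous argument used for the isotropic Grassmannian in Theorem~\ref{cor:isotropic-cartan}.
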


\begin{proof}
The proof is similar to that above; we just give a sketch. Using the
Cartan map, which is equivariant for the relevant spin groups, this
lemma follows from a similar statement for the corresponding (halfs of)
exterior power representations. Specifically, define
\begin{align*}
E&:=\langle e_{n_0+1},\ldots,e_q \rangle \subseteq V_q, \\
E'&:=\langle e_{n_0+1},\ldots,e_n \rangle \subseteq V_n, \text{ and} \\
F&:=\langle f_{n+1},\ldots,f_q \rangle  \subseteq V_q.
\end{align*}
Then the desired identity is
\begin{equation} \label{eq:cgm}
c_E \circ g \circ m_F = g'' \circ c_{E'} \circ g' 
\end{equation}
(up to a scalar), where 
\begin{align*}
c_E&:=c_{e_{n_0+1}} \circ \cdots \circ c_{e_q}:\Wedge^q V_q \to 
\Wedge^{n_0} V_{n_0}, \\
c_{E'}&:=c_{n_0+1} \circ \cdots \circ c_{e_n}:\Wedge^n V_n \to
\Wedge^{n_0} V_{n_0}, \text{ and} \\
m_F&:=m_{f_q} \circ \cdots \circ m_{f_{n+1}}: \Wedge^n V_n \to
\Wedge^q V_q
\end{align*}
and the $c_{e_i}$ and $m_{f_j}$ are as defined in \S\ref{ssec:Contraction}
and \S\ref{ssec:Multiplication}, respectively. Furthermore, since the
exterior powers are representations of the special orthogonal groups,
we may take $g,g',g''$ to be in $\SO(V_q), \SO(V_n), \SO(V_{n_0})$,
respectively.

We investigate the effect of the map on the left on (a pure tensor in
$\Wedge^n V_n$ corresponding to) a maximal (i.e., $n$-dimensional)
isotropic subspace
$W$ of $V_n$. First, $W$ is extended to $W':=W \oplus F$, then $g$ is
applied to $W'$, and the final contraction map sends $gW'$ to the image
in $V_{q}/E$ of $(gW') \cap E^{\perp}$.

Instead of intersecting $gW'$ with $E^\perp$, we may intersect
$W'=W \oplus F$ with $(E'')^\perp$ where $E'':=g^{-1} E$, followed by the
isometry $\ol{g}:(E'')^\perp
/ E'' \to E^\perp /E$ induced by $g$. Accordingly, one can verify that the map 
on the left-hand side of \eqref{eq:cgm} becomes (a scalar multiple of)
\[ \ol{g} \circ c_{E''} \circ m_F \]
where $c_{E''}:\Wedge^q V_q \to \Wedge^{n_0}((E'')^\perp / E'')$ is
the composition of contractions with a basis of $E''$, and where we
write $\ol{g}$ also for the map that $\ol{g}$ induces from
$\Wedge^{n_0} ((E'')^\perp / E'')$ to $\Wedge^{n_0} (E^\perp/E)$. 

Now consider the space $E'' \cap (V_n \oplus F) \subseteq V_q$. For $g$
in an open dense subset of $\SO(V_q)$, this intersection has the expected
dimension $(q-n_0) + (2n+q-n)-2q=n-n_0$, and for $g$ in an open dense
subset of $\SO(V_q)$ we also have $(E'')^\perp \cap F=\{0\}$ (because
$(E'')^\perp$ has codimension $q-n_0$, which is at least the dimension
$q-n$ of $F$). We restrict ourselves to such $g$. Then in particular $E''
\cap F=\{0\}$ and therefore the projection $\widetilde{E} \subseteq
V_n$ of $E'' \cap
(V_n \oplus F)$ along $F$ has dimension $n-n_0$, as well. Note that $\widetilde{E}$ is isotropic
because $E''$ is and because $F$ is the radical of the bilinear form
on $V_n \oplus F$. 

Furthermore, the projection $V_n \oplus F \to V_n$ restricts to a linear
isomorphism $(V_n \oplus F) \cap (E'')^\perp \to \widetilde{E}^\perp$,
where the latter is the orthogonal complement of $\widetilde{E}$ inside
$V_n$. This linear isomorphism induces an isometry
\[ h_1:((V_n \oplus F) \cap (E'')^\perp) / ((V_n \oplus F) \cap E'') \to 
\widetilde{E}^\perp / \widetilde{E} \]
between spaces of dimension $2n_0$ equipped with a nondegenerate
bilinear forms. On the other hand, the inclusion
$V_n \oplus F \to V_q$ also induces an isometry
\[ h_2:((V_n \oplus F) \cap (E'')^\perp) / ((V_n \oplus F) \cap E'')
\to (E'')^\perp / E''. \]
Now a computation shows that, up to a scalar, we have 
\[ c_{E''} \circ m_F = h_2 \circ h_1^{-1} \circ
c_{\widetilde{E}}, \]
where $c_{\widetilde{E}}:\Wedge^n V_n \to
\Wedge^{n_0}(\widetilde{E}^\perp/\widetilde{E})$ is
a composition of contractions with a basis of $\widetilde{E}$. Now
choose $g' \in \SO(V_n)$ such that $g' \widetilde{E}=E'$, so that we
have 
\[ c_{E'} \circ g' = \ol{g'} \circ c_{\widetilde{E}}, \]
where $\ol{g'}$ is the isometry
$\widetilde{E}^\perp/\widetilde{E} \to (E')^\perp / E'$ induced by
$g'$. We then conclude that 
\[ 
c_E \circ g \circ m_F = 
\ol{g} \circ h_2 \circ h_1^{-1} \circ (\ol{g'})^{-1} \circ c_{E'} \circ
g' \]
and hence we are done if we set 
\[ g'':=\ol{g} \circ h_2 \circ h_1^{-1} \circ (\ol{g'})^{-1}
\in \SO((E')^\perp / E')=\SO(V_{n_0}).  \qedhere
\]
\end{proof}

\end{document}